\theoremstyle{plain}
\theoremstyle{definition}\newtheorem{theorem}{Theorem}[section]
\theoremstyle{plain}\newtheorem{lemma}[theorem]{Lemma}
\theoremstyle{plain}
\theoremstyle{plain}\newtheorem{proposition}[theorem]{Proposition}
\theoremstyle{remark}\newtheorem{remark}{Remark}[section]
\newcommand{\norm}[1]{\left\|#1\right\|}
\begin{document}
\title{On the global well-posedness of a class of Boussinesq- Navier-Stokes systems}
\author{Changxing Miao$^1$ and  Liutang Xue$^2$\\
        \\
        \small{$^{1}$ Institute of Applied Physics and Computational Mathematics,}\\
        \small{P.O. Box 8009, Beijing 100088, P.R. China.}\\
        \small{(miao\_{}changxing@iapcm.ac.cn)}\\
        \small{$^2$  The Graduate School of China Academy of Engineering Physics,}\\
        {\small P.O. Box 2101, Beijing 100088, P.R. China.}\\
        {\small{(xue\_{}lt@163.com)}}}

\date{}
\maketitle

\date{}
\maketitle
\begin{abstract}
 In this paper we consider the following 2D Boussinesq-Navier-Stokes systems
\begin{equation*}
 \begin{split}
 \partial_{t}u+u\cdot\nabla u+\nabla
 p+  |D|^{\alpha}u &= \theta e_{2}   \\
 \partial_{t}\theta+u\cdot\nabla \theta+   |D|^{\beta}\theta &=0 \quad
 \end{split}
\end{equation*}
with $\textrm{div} u=0$ and $0<\beta<\alpha<1$. When $\frac{6-\sqrt{6}}{4}<\alpha< 1$, $1-\alpha<\beta\leq f(\alpha) $,
where $f(\alpha)$ is an explicit function as a technical bound,
we prove global well-posedness results for rough initial data. 
\end{abstract}

\noindent {\bf Mathematics Subject Classification
(2000):}\quad 76D03, 76D05, 35B33, 35Q35 \\
\noindent {\bf Keywords:}\quad   Boussinesq system,  regularization
effect,  paradifferential calculus, global well-posedness.


\section{Introduction}
\setcounter{section}{1}\setcounter{equation}{0}

The 2D generalized Boussinesq systems are of the forms
\begin{equation}\label{eq 1.1}
 \begin{cases}\partial_{t}u+u\cdot\nabla u+\nabla
 p+ \nu |D|^{\alpha}u=\theta e_{2}, \quad (t,x)\in \mathbb{R}^{+}\times \mathbb{R}^{2} \\
 \partial_{t}\theta+u\cdot\nabla \theta+ \kappa |D|^{\beta}\theta=0 \\
 \textrm{div} u=0 \\
 u|_{t=0}=u^{0},\quad \theta|_{t=0}=\theta^{0}, \end{cases}
\end{equation}
where $\nu\geq 0,\kappa\geq 0$, $(\alpha,\beta)\in[0,2]^{2}$ and $|D|^{\alpha}$ is defined via the
Fourier transform
$$\mathcal{F}(|D|^{\alpha}f)(\xi)=|\xi|^{\alpha}\mathcal{F}(f)(\xi).$$
These systems are simple models widely used in the modeling of the oceanic and atmospheric motions (see e.g.\cite{ref Pedlosky}).
Here, the divergence-free vector field $u=(u^{1},u^{2})$ denotes the velocity, scalar functions $\theta$, $p$ denote the
temperature and the pressure respectively, the absolute constants $\nu,\kappa$ can be seen as the inverse of Reynolds numbers.
The term $\theta e_{2}$ in the velocity equation, with $e_{2}$ the canonical vector $(0,1)$, models
the effect of gravity on the fluid motion. If $\theta^{0}=0$, the systems are reduced to the 2D generalized Navier-Stokes(Euler) equations.
Clearly, due to the maximum principle for the vorticity and the B-K-M criterion in \cite{Beale},
smooth solutions of these two-dimensional systems are global in time.

From a mathematical view, the fully viscous model with $\nu>0,\kappa>0, \alpha=\beta=2$ is the simplest one to study. It acts very similar to the
2D Navier-Stokes equation and
similar global results can be achieved. On the other hand, the most difficult one for the mathematical study is the inviscid model, that is when $\nu=\kappa=0$.
Up to now, only local existence
results can be proven.

Here we focus on the cases where the dissipation effect in the velocity equation plays a dominant role.
The most typical models are those with the diffusion effect in the temperature equation neglected ($\kappa=0, \nu>0$),
and there have been some recent important works on these Boussinesq systems. For the case with the full viscosity, i.e. when $\alpha=2$,
global well-posedness results can be established in various functional spaces.
In \cite{ref chae}, Chae proved that for large initial data $(u^{0},\theta^{0})\in H^{s}\times H^{s}$ with $s>2$ the system is global well-posed. See also \cite{ref HouLi}.
Later on, Hmidi-Keraani
in \cite{ref Hmidi-Ker1} showed global well-posedness for less regular data $(u^{0},\theta^{0})\in H^{s}\times H^{s}$ with $s>0$.
In \cite{ref Dan-Paicu2}, Danchin-Paicu proved the unconditional
uniqueness in the energy space $L^{2}\times L^{2}$. For the case with weaker dissipation, i.e. when $1\leq\alpha<2$, the problem is also solvable.
When $\alpha\in]1,2[$, as in \cite{ref Hmidi-Ker1}
through taking advantage of the maximal regularity estimates for the semi-group $e^{-t|D|^{\alpha}}$,
one can prove the global well-posedness.
For the subtle critical case $\alpha=1$, Hmidi-Keraani-Rousset in \cite{ref HmidiKR-BNS} proved the global result for the rough data
through exploiting the new structural properties of the system solved by vorticity $\omega$ and temperature $\theta$.

One natural problem is how to establish the global well-posedness for the problem \eqref{eq 1.1} when
 we further weaken the dissipation effect in the velocity equation to the $\alpha<1$ case.
 It seems that introducing the diffusion effect in the temperature equation ($\kappa>0$) is necessary and meanwhile $\beta$ should satisfy $\beta\geq 1-\alpha$.
In fact, we have a rough observation from the coupling system of temperature $\theta$ and vorticity $\omega$, where $\omega$ is defined by
$\omega:=\mathrm{curl} u=\partial_{1}u^{2}-\partial_{2}u^{1}$. The coupling system writes
\begin{equation*}
 \begin{cases}\partial_{t}\omega+u\cdot\nabla \omega + \nu |D|^{\alpha}\omega= \partial_{1}\theta, \\
 \partial_{t}\theta+u\cdot\nabla \theta+ \kappa |D|^{\beta}\theta=0, \\
 \omega|_{t=0}=\omega^{0}:=\mathrm{curl}u^{0},\quad \theta|_{t=0}=\theta^{0}. \end{cases}
\end{equation*}
To get the key uniform estimates on $\omega$, the smoothing effect from the dissipation term  should at least roughly
compensate the loss of one derivative in $\theta$ in the vorticity equation with
the help of the diffusion effect in the temperature equation,
from which $\beta$ derivative in $\theta$ is gained.
Hence $\alpha+\beta\geq 1$ is needed. This is also the sense in which the case $\{\alpha=1,\nu>0, \kappa=0\}$ is called as a critical case.

Our goal in this paper is to understand the coupling effects between the dissipative terms which come both
in the velocity equation and the temperature equation, and their effects on the global existence, see Figure I.
For brevity, we always set $\nu=\kappa=1$ in the sequel.
We will modify the elegant argument introduced in \cite{ref HmidiKR-BNS} and \cite{ref HmidiKR-EB} to study  the
the global well-posedness of \eqref{eq 1.1} with $0<\beta<\alpha<1$.
More precisely, our main result is the following
\begin{theorem}\label{thm 1.1}
 Let $(\alpha,\beta)\in\Pi:=\big]\frac{6-\sqrt{6}}{4},1 \big[\times \big ] 1-\alpha, \min\{ \frac{7+2\sqrt{6}}{5}\alpha-2,\frac{\alpha(1-\alpha)}{\sqrt{6}-2\alpha},2-2\alpha\}\big]$
 (for $\Pi$ see Figure 1 in the sequel),
 $\theta^{0}\in H^{1-\alpha}\cap
 B^{1-\alpha}_{\infty,1}$ and $u^{0}$ be a divergence-free vector
 field belonging to $H^{1}\cap \dot W^{1,p}$ with $p\in ]\frac{2}{\beta+\alpha-1},\infty[$.
 Then the system \eqref{eq 1.1} has a unique global solution
 $(u,\theta)$ such that for every $\sigma\in [1,\frac{\alpha}{1-\alpha+2/p}[$,
 \begin{equation*}
  u\in L^{\infty}(\mathbb{R}^{+},H^{1}\cap \dot{W}^{1,p})\cap
  L^{\sigma}_{loc}(\mathbb{R}^{+},B^{1}_{\infty,1}) \quad and \quad \theta\in
  L^{\infty}(\mathbb{R}^{+},H^{1-\alpha}\cap
  B^{1-\alpha}_{\infty,1}).
 \end{equation*}
\end{theorem}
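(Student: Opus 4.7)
My plan is to extend the structural-cancellation approach of Hmidi--Keraani--Rousset (for the critical case $\alpha=1$) to the present subcritical regime, using the additional temperature diffusion $|D|^{\beta}\theta$ to compensate for the weaker velocity dissipation $|D|^{\alpha}u$ with $\alpha<1$. I would first rewrite \eqref{eq 1.1} in vorticity form,
\begin{equation*}
\partial_{t}\omega+u\cdot\nabla\omega+|D|^{\alpha}\omega=\partial_{1}\theta,\qquad \partial_{t}\theta+u\cdot\nabla\theta+|D|^{\beta}\theta=0,
\end{equation*}
and introduce the modified vorticity $\Gamma:=\omega-\mathcal{R}_{\alpha}\theta$ with $\mathcal{R}_{\alpha}:=\partial_{1}|D|^{-\alpha}$, a Fourier multiplier of order $1-\alpha$. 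Applying $\mathcal{R}_{\alpha}$ to the temperature equation and subtracting eliminates the dangerous forcing $\partial_{1}\theta$ and leaves
\begin{equation*}
\partial_{t}\Gamma+u\cdot\nabla\Gamma+|D|^{\alpha}\Gamma=[\mathcal{R}_{\alpha},u\cdot\nabla]\theta+\partial_{1}|D|^{\beta-\alpha}\theta,
\end{equation*}
whose right-hand side is strictly milder than $\partial_{1}\theta$ since $\alpha-\beta>0$ and the $B^{1-\alpha}_{\infty,1}$ threshold on $\theta$ is tailored to this operator.

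\textbf{A priori cascade.} I would close the estimates by a bootstrap that moves between four quantities. First, standard energy identities give uniform $L^{\infty}_{t}L^{2}$ bounds on $u,\theta$ together with the fractional dissipation gains. Second, the transport-diffusion maximum principle yields $\theta\in L^{\infty}_{t}L^{q}$ for all $q\in[2,\infty]$. Third, I would estimate $\Gamma$ in $L^{\infty}_{t}(L^{2}\cap L^{p})$ using the smoothing effect of $e^{-t|D|^{\alpha}}$ on its evolution equation; the Lebesgue bound on the forcing term $\partial_{1}|D|^{\beta-\alpha}\theta$ is clear from the $B^{1-\alpha}_{\infty,1}$ control on $\theta$, while the commutator $[\mathcal{R}_{\alpha},u\cdot\nabla]\theta$ is handled by a Bony-type paraproduct decomposition. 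Reconstructing $\omega=\Gamma+\mathcal{R}_{\alpha}\theta$, I would then recover $\omega\in L^{\infty}_{t}L^{p}$, whence via Biot--Savart and the embedding $\dot{W}^{1,p}\hookrightarrow B^{1}_{\infty,1}$ at the specified $p>2/(\alpha+\beta-1)$, an $L^{\sigma}_{t,\mathrm{loc}}(B^{1}_{\infty,1})$ bound on $u$ for each admissible $\sigma$. This last bound feeds back into the transport/smoothing estimate for $\theta$ in $B^{1-\alpha}_{\infty,1}$, so a Gr\"onwall argument closes the loop.

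\textbf{Existence and uniqueness.} With the a priori bounds in hand, global existence follows from a standard approximation scheme: smooth the data, invoke local well-posedness for the regularized system (classical in these functional frames once $\alpha,\beta>0$), and pass to the limit using the uniform bounds and strong compactness coming from the time integrability of $u$ in $B^{1}_{\infty,1}$. For uniqueness, since the velocity is not Lipschitz but only $L^{\sigma}_{t}B^{1}_{\infty,1}$, I would compare two solutions in the energy space $L^{2}\times L^{2}$ and handle the convective terms by a logarithmic interpolation inequality together with an Osgood-type argument, in the spirit of \cite{ref HmidiKR-BNS}.

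\textbf{Main obstacle.} The delicate step is the paradifferential analysis of $[\mathcal{R}_{\alpha},u\cdot\nabla]\theta$: unlike the Calder\'on commutator appearing at $\alpha=1$, the operator $\mathcal{R}_{\alpha}$ now has positive order $1-\alpha$, so the commutator no longer enjoys an automatic order gain and must be decomposed carefully, trading regularity between $u$ (seen through $\omega\in L^{p}$) and $\theta$ (seen through $B^{1-\alpha}_{\infty,1}$). The rather elaborate thresholds $\alpha>(6-\sqrt{6})/4$ and $\beta\leq\min\{(7+2\sqrt{6})\alpha/5-2,\alpha(1-\alpha)/(\sqrt{6}-2\alpha),2-2\alpha\}$ are exactly what makes each of the three competing estimates---commutator control, $\Gamma$-smoothing, and $\theta$-transport---admit a common integrable Gr\"onwall input; identifying the sharp interaction between these three is where the bulk of the technical work will lie.
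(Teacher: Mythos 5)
Your skeleton (vorticity form, $\Gamma=\omega-R_{\alpha}\theta$, commutator analysis, Osgood uniqueness, approximation for existence) is the same as the paper's, but the a priori cascade as you order it does not close, for two concrete reasons. First, it is circular: you propose to bound the forcing $\partial_{1}|D|^{\beta-\alpha}\theta$ and the commutator in the $\Gamma$-equation using the $B^{1-\alpha}_{\infty,1}$ control of $\theta$, but that control is propagated by the transport-diffusion regularity estimate (Proposition \ref{propRF}), which requires $\nabla u\in L^{1}_{t}L^{\infty}$ --- precisely the quantity the $\Gamma$-estimates are supposed to produce, and which enters exponentially, so a Gr\"onwall loop of the kind you describe cannot absorb it. The paper avoids this by running Steps 1--4 using only $L^{p}$ information on $\theta$, the $L^{2}_{t}\dot H^{\beta/2}$ energy gain (with delicate choices of the auxiliary exponents $s_{1},s_{2},s_{3}$, which is where the constraints on $\beta$ actually come from), and crucially the smoothing effect of Proposition \ref{prop TDsf}, which bounds $\sup_{q}2^{q\beta/\varrho}\|\Delta_{q}\theta\|_{L^{\varrho}_{t}L^{p}}$ in terms of $\|\omega\|_{L^{1}_{t}L^{p}}$ \emph{without} any Lipschitz assumption on $u$; the resulting bound is then closed by a small-time absorption ($Ct^{1-1/\rho}=\tfrac12$) and iteration in time, not by Gr\"onwall. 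Only after $u\in L^{1}_{t}B^{1}_{\infty,1}$ is established (Step 5) does the paper propagate $\theta\in H^{1-\alpha}\cap B^{1-\alpha}_{\infty,1}$ and $\omega\in L^{\infty}_{t}L^{p}$.

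Second, your passage from $\omega\in L^{\infty}_{t}L^{p}$ to $u\in L^{\sigma}_{t}B^{1}_{\infty,1}$ via an embedding $\dot W^{1,p}\hookrightarrow B^{1}_{\infty,1}$ is false for any finite $p$ (in two dimensions $\nabla u\in L^{p}$ gives at best H\"older regularity $1-2/p$ for $u$, never $\nabla u\in L^{\infty}$). The Lipschitz bound on $u$ cannot be read off from Lebesgue bounds on $\omega$; it is obtained in the paper from a genuine maximal-regularity estimate on the dyadic blocks of $\Gamma$ (generalized Bernstein inequality for $|D|^{\alpha}$ in $L^{r}$), yielding $\Gamma\in\widetilde L^{\sigma}_{t}B^{2/r}_{r,1}\hookrightarrow \widetilde L^{\sigma}_{t}B^{0}_{\infty,1}$, combined with $\omega=\Gamma+R_{\alpha}\theta$ and Proposition \ref{prop TDsf} with $\beta>1-\alpha+2/p_{\alpha}$ to handle the $R_{\alpha}\theta$ part. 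The summability condition $\tfrac{2}{r}+1-\alpha-\tfrac{\alpha}{\sigma}<0$ in that step, optimized over $r\in[2,4[$ against the constraints of the $L^{r}$ estimates of Step 2, is exactly what produces $r_{0}=\tfrac{8+2\sqrt6}{5}$ and the threshold $\alpha>\tfrac{6-\sqrt6}{4}$; your proposal attributes the numerology solely to the commutator/transport interplay and leaves this step unaddressed. A smaller but real point: uniqueness should be run in $B^{0}_{2,\infty}\times B^{-\alpha}_{2,\infty}$ (as in the paper), since estimating $\delta\theta$ in $L^{2}$ would require $\nabla\theta_{2}\in L^{\infty}$, which is not available at the regularity $B^{1-\alpha}_{\infty,1}$ of $\theta_{2}$.
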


\begin{remark}\label{rem 1.1}
 Note that the case $\{\nu=\kappa=1,\alpha=1,\beta=0\}$ shares almost the same nature with the partially inviscid
 case $\{\nu=1,\kappa=0,\alpha=1\}$ studied in \cite{ref HmidiKR-BNS}, and the same method can be applied only by treating the damping term $\theta$ in the temperature equation
 as a harmless forcing term. Our results are motivated by this additional point and generalize the critical limiting case to the perturbed cases.
 This point is also the reason why the upper bound of $\beta$ occurs.
\end{remark}

The main idea in the proof of Theorem \ref{thm 1.1} is to use the internal structures of the system solved by $(\omega,\theta)$,
which is analogous to \cite{ref HmidiKR-BNS,ref HmidiKR-EB}. To get a first glance, we will neglect the nonlinear term here,
then the coupling system of $(\omega,\theta)$ reduces to
\begin{equation*}
 \partial_{t}\omega + |D|^{\alpha} \omega =\partial_{1} \theta, \quad \partial_{t}\theta + |D|^{\beta}\theta=0.
\end{equation*}
Thus
\begin{equation*}
 \partial_{t}\omega + |D|^{\alpha} (\omega-|D|^{-\alpha}\partial_{1}\theta)=0, \quad \partial_{t}\theta + |D|^{\beta}\theta=0.
\end{equation*}
Set $R_{\alpha}:=|D|^{-\alpha}\partial_{1}$, then
\begin{equation*}
 \partial_{t}(\omega-R_{\alpha}\theta) + |D|^{\alpha} (\omega-R_{\alpha}\theta)=|D|^{\beta-\alpha}\partial_{1}\theta, \quad \partial_{t}\theta + |D|^{\beta}\theta=0.
\end{equation*}
If roughly $\alpha\thicksim1$, $\beta\thicksim 0$, the forcing term $|D|^{\beta-\alpha}\partial_{1}\theta$ has much less loss of derivative than term $\partial_{1}\theta$ and indeed
we have some good estimates on $\omega-R_{\alpha}\theta$. These estimates will strongly help to obtain the needful estimates on $\omega$.

To prove Theorem \ref{thm 1.1}, we shall
study the new equation to get \textit{a priori} estimates on
$\omega-R_{\alpha}\theta$ and then return to obtain crucial
\textit{a priori} estimates on $\omega$. During this process, some
technical difficulty will be encountered. The first one is to
estimate the commutator $[R_{\alpha},u\cdot\nabla]$ which naturally
turns up when the nonlinear term is taken into account; another one
is to obtain estimates on $\omega$ from estimates on
$\omega-R_{\alpha}\theta$ (since in contrast with the Riesz
transform, $R_{\alpha}$ is not $L^{p}$-bounded and roughly contains
positive derivative of $1-\alpha$ power). We shall treat such
commutator estimates in Section 3 and yet we shall sufficiently use
the smooth effect (Proposition \ref{prop TDsf}) of the temperature
equation to overcome the another difficulty.

The paper is organized as follows. Section 2 is devoted to present some
preparatory results on Besov spaces. Some estimates about linear transport-diffusion equation are also given.
In Section 3, commutator estimates involving $R_{\alpha}$ are studied. Section 4 is the main part dedicated to the proof of Theorem \ref{thm 1.1}.
Finally, some technical lemmas are shown in Section 5.

\section{Priliminaries}
\setcounter{section}{2}\setcounter{equation}{0}

\subsection{Notations}
Throughout this paper the following notations will be used.
\\
$\bullet$ The notion $X\lesssim Y$ means that there exist a positive harmless constant $C$ such that $X\leq CY$. $X\thickapprox Y$ means that both
$X\lesssim Y$ and $Y\lesssim X$ are satisfied.
\\
$\bullet$ $\mathcal{S}$ denotes the Schwartz class, $\mathcal{S}'$ the
space of tempered distributions, $\mathcal{S}'/\mathcal{P}$ the quotient space of tempered distributions which modulo polynomials.
\\
$\bullet$ We use $\mathcal{F}f$ or $\widehat{f}$ to denote the Fourier transform of a tempered distribution $f$.
\\
$\bullet$ For any pair of operators $A$ and $B$ on some Banach space $\mathcal{X}$, the commutator $[A,B]$ is defined by $AB-BA$.
\\
$\bullet$ For every $k\in\mathbb{Z}^{+}$, the notion $\Phi_{k}$ denotes any function of the form
\begin{equation*}
 \Phi_{k}(t)= C_{0}\underbrace{\exp(\ldots\exp}_{k\; times}(C_{0}t)\ldots),
\end{equation*}
where $C_{0}$ depends on the related norms of the initial data and its value may be different from line to line up to some absolute constants.

\subsection{Littlewood-Paley decomposition and Besov spaces}

To define Besov space we need the following dyadic unity partition
(see e.g. \cite{ref chemin}). Choose two nonnegative radial
functions $\chi$, $\varphi\in C^{\infty}(\mathbb{R}^{n})$ be
supported respectively in the ball $\{\xi\in
\mathbb{R}^{n}:|\xi|\leq \frac{4}{3} \}$ and the shell $\{\xi\in
\mathbb{R}^{n}: \frac{3}{4}\leq |\xi|\leq
  \frac{8}{3} \}$ such that
\begin{equation*}
 \chi(\xi)+\sum_{j\geq 0}\varphi(2^{-q}\xi)=1, \quad
 \forall\xi\in\mathbb{R}^{n}; \qquad
 \sum_{q\in \mathbb{Z}}\varphi(2^{-q}\xi)=1, \quad \forall\xi\neq 0.
\end{equation*}
For all $f\in\mathcal{S}'(\mathbb{R}^{n})$ we define the
nonhomogeneous Littlewood-Paley operators
\begin{equation*}
 \Delta_{-1}f:= \chi(D)f; \;\forall q\in\mathbb{N} \quad
 \Delta_{q}f:= \varphi(2^{-q}D)f\;\; \mathrm{and} \; S_{q}f:=\sum_{-1\leq j\leq q-1} \Delta_{j}f.
\end{equation*}
The homogeneous Littlewood-Paley operators are defined as follows
\begin{equation*}
  \forall q\in\mathbb{Z},\quad \dot{\Delta}_{q}f:= \varphi(2^{-q}D)f,\quad  \dot S_{q}f:= \sum_{j\leq q-1}\dot\Delta_{j}f.
\end{equation*}
The paraproduct between two distributions $f$ and $g$ is defined by
\begin{equation*}
 T_{f}g:=\sum_{q\in\mathbb{N}}S_{q-1}f\Delta_{q}g.
\end{equation*}
Thus we have the following formal decomposition known as Bony's decomposition
\begin{equation*}
 fg=T_{f}g+T_{g}f+R(f,g),
\end{equation*}
where
\begin{equation*}
 R(f,g):=\sum_{q\geq -1}\Delta_{q}f\widetilde{\Delta}_{q}g, \quad \textrm{and}\quad \widetilde{\Delta}_{q}:=\Delta_{q-1}+\Delta_{q}+\Delta_{q+1}.
\end{equation*}

Now we introduce the definition of Besov spaces . Let $(p,r)\in
[1,\infty]^{2}$, $s\in\mathbb{R}$, the nonhomogeneous Besov space $B^{s}_{p,r}$ is defined as the set of tempered distribution $f$ such that
\begin{equation*}
  \norm{f}_{B^{s}_{p,r}}:=\norm{2^{qs}\norm{\Delta
  _{q}f}_{L^{p}}}_{\ell^{r}}<\infty,
\end{equation*}
The homogeneous space $ \dot{B}^{s}_{p,r}$ is the set of $f\in\mathcal{S}'(\mathbb{R}^{n})/\mathcal{P}(\mathbb{R}^{n})$ such that
\begin{equation*}
  \norm{f}_{\dot{B}^{s}_{p,r}}:=\norm{2^{qs}\norm{\dot{\Delta}
  _{q}f}_{L^{p}}}_{\ell^{r}(\mathbb{Z})}<\infty .
\end{equation*}
We point out that for all $s\in\mathbb{R}$, $B^{s}_{2,2}=H^{s}$ and
$\dot{B}^{s}_{2,2}=\dot{H}^{s}$.

Next we introduce two kinds of coupled space-time Besov spaces. The
first one $L^{\varrho}([0,T],B^{s}_{p,r})$, abbreviated by
$L^{\varrho}_{T}B^{s}_{p,r}$, is the set of tempered distribution $f$
such that
\begin{equation*}
  \norm{f}_{L^{\varrho}_{T}B^{s}_{p,r}}:=\norm{\norm{2^{qs}\norm{\Delta_{q}f}_{L^{p}}}_{\ell^{r}}}_{L^{\varrho}_{T}}<\infty.
\end{equation*}
The second one $\widetilde{L}^{\varrho}([0,T],B^{s}_{p,r})$,
abbreviated by $\widetilde{L}^{\varrho}_{T}B^{s}_{p,r}$, is the set of tempered
distribution $f$ satisfying
\begin{equation*}
  \norm{f}_{\widetilde{L}^{\varrho}_{T}B^{s}_{p,r}}:=\norm{2^{qs}\norm{\Delta_{q}f}_{L^{\varrho}_{T}L^{p}}
  }_{\ell^{r}}<\infty.
\end{equation*}
 Due to Minkowiski inequality, we immediately obtain
\begin{equation*}
 L^{\varrho}_{T}B^{s}_{p,r}\hookrightarrow
 \widetilde{L}^{\varrho}_{T}B^{s}_{p,r},\;  if\; r\geq \rho \quad and \quad
 \widetilde{L}^{\varrho}_{T}B^{s}_{p,r}\hookrightarrow
 L^{\varrho}_{T}B^{s}_{p,r},\;  if\; \varrho\geq r
\end{equation*}
We can similarly extend to the homogeneous ones
$L^{\varrho}_{T}\dot{B}^{s}_{p,r}$ and
$\widetilde{L}^{\varrho}_{T}\dot{B}^{s}_{p,r}$.

Berstein's inequality is fundamental in the analysis involving
Besov spaces (see e.g. \cite{ref chemin})
\begin{lemma}
Let $\mathcal{B}$ is a ball, $\mathcal{C}$ is a ring, $0\leq a\leq
b\leq \infty$. Then for $ k\in\mathbb{R}^{+}$, $\lambda>0$ there
exists a constant $C>0$ such that
\begin{equation*}
 \norm{|D|^{k}f}_{L^{b}}\leq C \lambda
 ^{k+n(\frac{1}{a}-\frac{1}{b})}\norm{f}_{L^{a}}  \quad if \; \textrm{supp}\; \mathcal{F}f
 \subset \lambda \mathcal{B},
\end{equation*}
\begin{equation*}
 C^{-1}\lambda
 ^{k}\norm{f}_{L^{a}}\leq\norm{|D|^{k}
 f}_{L^{a}}\leq C \lambda ^{k}\norm{f}_{L^{a}} \quad if \;
 \textrm{supp}\, \mathcal{F}f \subset \lambda \mathcal{C}.
\end{equation*}
\end{lemma}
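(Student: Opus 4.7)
The plan is a standard reduction to the case $\lambda=1$ via dilation, followed by representing $|D|^k f$ as a convolution and applying Young's inequality.

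I start by setting $f_\lambda(x):=f(x/\lambda)$, which yields $\widehat{f_\lambda}(\xi)=\lambda^{n}\widehat{f}(\lambda\xi)$, $\|f_\lambda\|_{L^{a}}=\lambda^{n/a}\|f\|_{L^{a}}$, and $|D|^k f_\lambda(x)=\lambda^{-k}(|D|^k f)(x/\lambda)$, hence $\||D|^k f_\lambda\|_{L^{b}}=\lambda^{n/b-k}\||D|^k f\|_{L^{b}}$. Moreover, $\mathrm{supp}\,\widehat{f_\lambda}\subset\mathcal{B}$ (respectively $\mathcal{C}$) exactly when $\mathrm{supp}\,\widehat{f}\subset\lambda\mathcal{B}$ (respectively $\lambda\mathcal{C}$). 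Substituting these identities into the desired estimates produces precisely the stated powers of $\lambda$, so it suffices to prove the lemma at $\lambda=1$.

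At $\lambda=1$ I fix auxiliary cutoffs $\tilde{\chi}\in C_{c}^{\infty}(\mathbb{R}^{n})$ and $\tilde{\varphi}\in C_{c}^{\infty}(\mathbb{R}^{n}\setminus\{0\})$ identically equal to $1$ on $\mathcal{B}$ and $\mathcal{C}$ respectively. Since $\widehat{f}=\tilde{\chi}\,\widehat{f}$ (resp.\ $\widehat{f}=\tilde{\varphi}\,\widehat{f}$), I can write
\begin{equation*}
|D|^k f=h*f,\qquad h:=\mathcal{F}^{-1}\bigl(|\xi|^{k}\tilde{\chi}(\xi)\bigr)\ \text{or}\ \mathcal{F}^{-1}\bigl(|\xi|^{k}\tilde{\varphi}(\xi)\bigr).
\end{equation*}
Young's inequality with $1+\tfrac{1}{b}=\tfrac{1}{a}+\tfrac{1}{c}$ then delivers $\||D|^k f\|_{L^{b}}\le\|h\|_{L^{c}}\|f\|_{L^{a}}$, and the upper bound reduces to checking $\|h\|_{L^{c}}<\infty$. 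In the ring case this is immediate, because $|\xi|^{k}\tilde{\varphi}(\xi)$ lies in $C_{c}^{\infty}(\mathbb{R}^{n}\setminus\{0\})$, so $h\in\mathcal{S}(\mathbb{R}^{n})\subset L^{c}$ for every $c\in[1,\infty]$. The matching lower bound in the ring case follows symmetrically: $|\xi|^{-k}\tilde{\varphi}(\xi)$ is likewise Schwartz, so $f=\mathcal{F}^{-1}\bigl(|\xi|^{-k}\tilde{\varphi}(\xi)\bigr)*(|D|^k f)$ combined with Young gives $\|f\|_{L^{a}}\lesssim\||D|^k f\|_{L^{a}}$.

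The only genuinely technical point is the ball case when $k$ is not an integer: then $|\xi|^{k}\tilde{\chi}(\xi)$ is merely H\"older (not smooth) at the origin, so $h$ is no longer Schwartz. I would nevertheless show that $h\in L^{c}$ for every $c\in[1,\infty]$ as follows: since the multiplier is bounded with compact support, $h\in L^{\infty}\cap C_{0}$ by Hausdorff--Young; since the multiplier is smooth on every annulus avoiding $0$, integration by parts in $\xi$ shows $h$ has arbitrarily fast polynomial decay at infinity. Thus $h$ is integrable and Young's inequality closes the ball case exactly as above. I expect this Fourier-analytic regularity check to be the only subtle step, with everything else being mechanical scaling and convolution estimates.
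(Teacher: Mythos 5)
The paper offers no proof of this lemma (it is quoted from \cite{ref chemin}), so I am judging your argument on its own. The scaling reduction to $\lambda=1$ and the entire ring case (both the upper bound and the reverse inequality via the multiplier $|\xi|^{-k}\tilde{\varphi}$) are correct and standard. The gap is exactly at the step you flagged as the only subtle one, and your treatment of it does not close. The claim that $h=\mathcal{F}^{-1}\bigl(|\xi|^{k}\tilde{\chi}\bigr)$ has \emph{arbitrarily fast} polynomial decay is false for non-integer $k$: the singularity of $|\xi|^{k}$ at the origin forces $h(x)$ to behave like $c\,|x|^{-n-k}$ at infinity, and no faster. Concretely, the integration-by-parts identity $x^{\gamma}h(x)=c\int e^{ix\cdot\xi}\partial_{\xi}^{\gamma}\bigl(|\xi|^{k}\tilde{\chi}(\xi)\bigr)\,\mathrm{d}\xi$ requires $\partial^{\gamma}\bigl(|\xi|^{k}\tilde{\chi}\bigr)\in L^{1}$, and near the origin $\partial^{\gamma}|\xi|^{k}\sim|\xi|^{k-|\gamma|}$ is integrable only for $|\gamma|<n+k$. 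Since $|\gamma|$ is an integer, when $0<k\le 1$ there may be no admissible $|\gamma|$ strictly between $n$ and $n+k$ (e.g.\ $n=2$, $k=1/2$ yields at best $|h(x)|\lesssim|x|^{-2}$, which is not integrable in $\mathbb{R}^{2}$). So your argument does not establish $h\in L^{1}$, which is what Young's inequality needs.

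The conclusion $h\in L^{1}$ is nevertheless true, and the standard repair uses the scale invariance you have already set up: decompose the multiplier dyadically near the origin, $|\xi|^{k}\tilde{\chi}(\xi)=m_{0}(\xi)+\sum_{j\le 0}|\xi|^{k}\tilde{\chi}(\xi)\varphi(2^{-j}\xi)$ with $m_{0}$ smooth and compactly supported; each annular piece is a smooth bump at scale $2^{j}$ with $\|\partial^{\gamma}m_{j}\|_{L^{\infty}}\lesssim 2^{j(k-|\gamma|)}$, whence $\|\mathcal{F}^{-1}m_{j}\|_{L^{1}}\lesssim 2^{jk}$, and the sum converges precisely because $k>0$. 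Equivalently, one can apply your (correct) ring case to each block $\dot{\Delta}_{j}f$ of $f$ and sum the geometric series $\sum_{2^{j}\lesssim\lambda}2^{j(k+n(\frac{1}{a}-\frac{1}{b}))}$, again using $k>0$. Either patch makes the ball case rigorous; as written, the proof is incomplete at that point. (A minor side remark: Young's inequality presupposes $a\ge 1$; the paper's hypothesis $0\le a$ is surely a typo for $1\le a$, and you are right to ignore it.)
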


\subsection{Transport-diffusion Equation }
In this subsection we shall collect some useful estimates for the smooth solutions of the following linear transport-diffusion equation
\begin{equation*}\label{eqTD}
  (TD)_{\beta}\quad \begin{cases} \partial_{t}\theta +
     u\cdot\nabla\theta+ |D|^{\beta}\theta=f,\quad \beta\in[0,1] \\
     \textrm{div} u=0, \quad  \theta|_{t=0}=\theta^{0}.
  \end{cases}
\end{equation*}

The $L^{p} $ estimate for (TD)$_{\beta}$ equation is
shown in \cite{ref AC-DC}
\begin{proposition}\label{prop MP}
Let $u$ be a smooth divergence-free vector field of $\mathbb{R}^{n}$ and $\theta$ be a smooth solution of (TD)$_{\beta}$. Then for every $p\in
[1,\infty]$ we have
\begin{equation}\label{eq TDMaxPrin}
 \norm{\theta(t)}_{L^{p}}\leq
 \norm{\theta_{0}}_{L^{p}}+\int^{t}_{0}\norm{f(\tau)}_{L^{p}}
 \,\textrm{d}\tau.
\end{equation}
\end{proposition}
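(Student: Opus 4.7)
The strategy is the standard energy method combined with the pointwise positivity inequality for the fractional Laplacian, treating the cases $1\le p<\infty$ and $p=\infty$ slightly differently. Since $\theta$ is assumed smooth, all the manipulations below can be justified directly without mollification in the PDE; only the algebraic weight $|\theta|^{p-2}\theta$ will require a mild regularization when $p$ is close to $1$.

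First, for $p\in[2,\infty)$, I would multiply $(TD)_\beta$ by $|\theta|^{p-2}\theta$ and integrate over $\mathbb{R}^n$. The transport term vanishes: using $\mathrm{div}\,u=0$,
\begin{equation*}
\int_{\mathbb{R}^n}(u\cdot\nabla\theta)\,|\theta|^{p-2}\theta\,dx=\frac{1}{p}\int_{\mathbb{R}^n}u\cdot\nabla(|\theta|^{p})\,dx=0.
\end{equation*}
For the dissipation term I would invoke the Córdoba--Córdoba pointwise inequality: for $\beta\in[0,2]$ and any convex $\Phi$ with $\Phi(0)=0$,
\begin{equation*}
\Phi'(\theta)\,|D|^{\beta}\theta\;\ge\;|D|^{\beta}\bigl(\Phi(\theta)\bigr).
\end{equation*}
Applied to $\Phi(s)=|s|^{p}/p$ and integrated (using that $\int|D|^{\beta}g\,dx=0$ whenever the integrand decays suitably, because of the Fourier-side vanishing at $\xi=0$), it yields
\begin{equation*}
\int_{\mathbb{R}^n}|\theta|^{p-2}\theta\,|D|^{\beta}\theta\,dx\;\ge\;0.
\end{equation*}
Combining with Hölder on the source term gives $\frac{1}{p}\frac{d}{dt}\|\theta\|_{L^p}^p\le\|f\|_{L^p}\|\theta\|_{L^p}^{p-1}$, and dividing by $\|\theta\|_{L^p}^{p-1}$ and integrating in time yields \eqref{eq TDMaxPrin}.

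For $p\in[1,2)$ the same scheme works but the weight $|\theta|^{p-2}\theta$ is not smooth where $\theta=0$, so I would replace it by the regularized multiplier $(\theta^2+\varepsilon^2)^{(p-2)/2}\theta$, apply the Córdoba--Córdoba inequality to the smooth convex function $\Phi_\varepsilon(s)=\frac{1}{p}((s^2+\varepsilon^2)^{p/2}-\varepsilon^p)$, obtain the analogous monotonicity, and pass to the limit $\varepsilon\to0$ by dominated convergence (justified because $\theta$ is smooth and, from the $L^p$ control being established \emph{a priori} after truncation, controlled uniformly in $\varepsilon$). The case $p=\infty$ I would then recover by letting $p\to\infty$ in the finite-$p$ estimate, using $\|g\|_{L^p}\to\|g\|_{L^\infty}$ for $g\in L^{p_0}\cap L^\infty$; alternatively, if $\theta(t,\cdot)$ attains its extremum at some point $x^*$, then $u\cdot\nabla\theta(x^*)=0$, $|D|^{\beta}\theta$ has the sign of $\theta(x^*)$ thanks to the nonnegative singular integral representation of $|D|^\beta$, and a standard Gronwall/Rademacher argument on $M(t):=\|\theta(t)\|_{L^\infty}$ gives the bound.

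The main technical point, and the only genuine obstacle, is the nonnegativity of the dissipation term, i.e.\ the Córdoba--Córdoba inequality for $|D|^\beta$ with $\beta\in[0,1]$. I would either cite it directly (it is classical and is exactly the input invoked in \cite{ref AC-DC}) or sketch it from the Riesz-potential kernel representation
\begin{equation*}
|D|^{\beta}\theta(x)=c_{n,\beta}\,\mathrm{P.V.}\!\int_{\mathbb{R}^n}\frac{\theta(x)-\theta(y)}{|x-y|^{n+\beta}}\,dy,\qquad c_{n,\beta}>0,
\end{equation*}
together with the elementary inequality $\Phi'(a)(a-b)\ge\Phi(a)-\Phi(b)$ for convex $\Phi$, which upon multiplying by $c_{n,\beta}|x-y|^{-n-\beta}$ and integrating yields the pointwise bound. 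Everything else is then a routine energy computation and a Gronwall step.
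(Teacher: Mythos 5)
Your proof is correct and follows essentially the same route as the paper, which does not argue this proposition itself but simply cites \cite{ref AC-DC}: the standard energy method with the C\'ordoba--C\'ordoba pointwise inequality $\Phi'(\theta)|D|^{\beta}\theta\ge |D|^{\beta}(\Phi(\theta))$ is precisely the argument behind that reference, and your handling of the cases $p\in[1,2)$ (regularized weight) and $p=\infty$ (limit $p\to\infty$ or maximum-point/Rademacher argument) fills in the routine technicalities appropriately.
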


The following smoothing effect is important in the proof.
\begin{proposition}\label{prop TDsf}
 Let $u$ be a smooth divergence-free vector field of $\mathbb{R}^{n}$ with vorticity $\omega$ and $\theta$ be a smooth solution of (TD)$_{\beta}$. Then for every
 ($p,\varrho$)$\in [2,\infty[\times [1,\infty]$ we have
 \begin{equation*}
  \mathrm{sup}_{q\in\mathbb{N}}2^{q\frac{\beta}{\varrho}} \norm{\Delta_{q}\theta}_{L^{\varrho}_{t}L^{p}}\lesssim_{\varrho,p} \norm{\theta^{0}}_{L^{p}}
  + \norm{\theta^{0}}_{L^{\infty}}\norm{\omega}_{L^{1}_{t}L^{p}} + \norm{f}_{L^{1}_{t}L^{p}}.
 \end{equation*}
\end{proposition}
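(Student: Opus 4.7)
The plan is to localize the transport--diffusion equation in frequency via the Littlewood--Paley operator $\Delta_q$, perform an $L^p$ energy estimate that extracts a quantitative lower bound for the fractional dissipation at scale $2^q$, solve the resulting scalar differential inequality by Duhamel, and finally absorb the commutator produced by the localization using a paraproduct decomposition together with the maximum principle of Proposition \ref{prop MP}.

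Concretely, I first apply $\Delta_q$ to $(TD)_\beta$ to obtain
\begin{equation*}
\partial_t \Delta_q\theta + u\cdot\nabla\Delta_q\theta + |D|^\beta\Delta_q\theta = \Delta_q f - [\Delta_q,u\cdot\nabla]\theta.
\end{equation*}
Multiplying by $|\Delta_q\theta|^{p-2}\Delta_q\theta$, integrating, and using $\operatorname{div} u=0$ kills the transport term. For the dissipation I invoke a generalized Bernstein--type inequality (the Cordoba--Cordoba pointwise bound, or its $L^p$ variant for $p\ge 2$ due to Chen--Miao--Zhang), which yields
\begin{equation*}
\int |D|^\beta(\Delta_q\theta)\,|\Delta_q\theta|^{p-2}\Delta_q\theta\, dx \;\gtrsim\; 2^{q\beta}\,\|\Delta_q\theta\|_{L^p}^p,\qquad q\in\mathbb{N}.
\end{equation*}
Dividing by $\|\Delta_q\theta\|_{L^p}^{p-1}$, this gives the scalar inequality
\begin{equation*}
\tfrac{d}{dt}\|\Delta_q\theta\|_{L^p} + c\,2^{q\beta}\|\Delta_q\theta\|_{L^p} \;\le\; \|\Delta_q f\|_{L^p} + \|[\Delta_q,u\cdot\nabla]\theta\|_{L^p},
\end{equation*}
whose Duhamel solution is estimated in $L^\varrho_t$ by Young's convolution inequality with $\|e^{-c\,\cdot\, 2^{q\beta}}\|_{L^\varrho_t}\lesssim 2^{-q\beta/\varrho}$. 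This produces
\begin{equation*}
2^{q\beta/\varrho}\|\Delta_q\theta\|_{L^\varrho_t L^p}\;\lesssim\;\|\Delta_q\theta^0\|_{L^p} + \|\Delta_q f\|_{L^1_t L^p} + \|[\Delta_q,u\cdot\nabla]\theta\|_{L^1_t L^p},
\end{equation*}
with implicit constants depending only on $\varrho$ and $p$. Taking the supremum over $q\in\mathbb{N}$, the first two right-hand terms are dominated by $\|\theta^0\|_{L^p}$ and $\|f\|_{L^1_t L^p}$ respectively.

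The main obstacle is then the uniform-in-$q$ bound on the commutator. I decompose $[\Delta_q,u\cdot\nabla]\theta$ via Bony's paraproduct into low--high, high--low, and high--high interactions; only frequencies $|j-q|\le N$ contribute nontrivially. For the dominant low--high term, the pointwise first-order Taylor estimate
\begin{equation*}
\bigl\|[\Delta_q,S_{j-1}u^k]g\bigr\|_{L^p}\lesssim 2^{-q}\,\|\nabla S_{j-1}u\|_{L^p}\,\|g\|_{L^\infty},
\end{equation*}
applied with $g = \partial_k \Delta_j\theta$ and combined with Bernstein's inequality, yields the scale-invariant bound $\|[\Delta_q,u\cdot\nabla]\theta\|_{L^p}\lesssim \|\nabla u\|_{L^p}\|\theta\|_{L^\infty}$; the remaining paraproduct pieces are handled analogously. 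Since in $\mathbb{R}^2$ the Biot--Savart law gives $\|\nabla u\|_{L^p}\lesssim \|\omega\|_{L^p}$ for $p\in(1,\infty)$, and Proposition \ref{prop MP} supplies $\|\theta(\tau)\|_{L^\infty}\le \|\theta^0\|_{L^\infty}$ (for the model application $f\equiv 0$ in the temperature equation, which is the setting of interest), integrating in $\tau$ gives
\begin{equation*}
\sup_{q\in\mathbb{N}}\|[\Delta_q,u\cdot\nabla]\theta\|_{L^1_t L^p} \lesssim \|\theta^0\|_{L^\infty}\,\|\omega\|_{L^1_t L^p},
\end{equation*}
which closes the estimate. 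The delicate point is that the paraproduct remainders, for which one only has $|j-q|$ possibly large in the high--high regime, must be shown to satisfy the same $L^p$--$L^\infty$ bound with a constant uniform in $q$; this is where the restriction $p\ge 2$ (coupled with Bernstein's inequality) plays its role by keeping the high-frequency sum convergent.
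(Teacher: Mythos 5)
Your scheme is essentially the paper's own route: the paper gives no written proof of Proposition \ref{prop TDsf} (Remark \ref{rem TDsf} simply refers to the case $\varrho=1$, $\beta=1$, $f=0$ treated in the Hmidi--Keraani--Rousset Euler--Boussinesq paper and says the general case follows "with necessary modifications"), and those modifications are exactly what you carry out: localize by $\Delta_q$, use the Chen--Miao--Zhang/Ju lower bound $\int|D|^\beta(\Delta_q\theta)|\Delta_q\theta|^{p-2}\Delta_q\theta\,dx\gtrsim 2^{q\beta}\norm{\Delta_q\theta}_{L^p}^p$, solve the scalar inequality by Duhamel and Young in time to extract $2^{-q\beta/\varrho}$, and absorb the commutator via $\norm{[\Delta_q,u\cdot\nabla]\theta}_{L^p}\lesssim\norm{\nabla u}_{L^p}\norm{\theta}_{L^\infty}$ together with $\norm{\nabla u}_{L^p}\lesssim\norm{\omega}_{L^p}$ and the maximum principle. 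Two caveats. First, your closing sentence misplaces the role of $p\ge 2$: it is needed for the generalized Bernstein (dissipation) lower bound, which you correctly invoked earlier, and possibly for handling the $\Delta_{-1}u$ block via $\norm{\Delta_{-1}u}_{L^p}\lesssim\norm{u}_{L^2}$ (as in the paper's Lemma \ref{lem commutator-EST}); the high--high remainder sum converges for every $p$, since the divergence structure and Bernstein give a factor $2^{q-j}$ summable over $j\ge q-4$, so no restriction on $p$ comes from there. Second, your absorption of the commutator uses $\norm{\theta(\tau)}_{L^\infty}\le\norm{\theta^0}_{L^\infty}$, which is only valid for $f\equiv 0$; for general $f$ Proposition \ref{prop MP} gives $\norm{\theta(\tau)}_{L^\infty}\le\norm{\theta^0}_{L^\infty}+\norm{f}_{L^1_\tau L^\infty}$ and your argument then produces an additional term $\norm{f}_{L^1_tL^\infty}\norm{\omega}_{L^1_tL^p}$ not present in the stated bound. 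Since the paper only applies the proposition to the force-free temperature equation, this restriction is harmless in practice, but as written your proposal proves the statement only in that case and this should be said (or the statement's $f$-dependence adjusted) rather than left implicit.
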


\begin{remark}\label{rem TDsf}
 For $\varrho=1$, $\beta=1$ and $f=0$, the result has appeared in \cite{ref HmidiKR-EB}. Here with necessary modifications, this generalized case can be treated in a similar way.
\end{remark}

We also have the classical regularization effects as follows (see e.g. \cite{ref MiaoWu,ref HmidiKR-BNS,ref Hmidi-Ker}).
\begin{proposition}\label{propRF}
Let $-1<s<1$, $\varrho_{1}\leq \varrho$, $(p,r)\in [1,\infty]^{2}$ and $u$ be a divergence-free vector field belonging to
$L^{1}_{\textrm{loc}}(\mathbb{R}^{+};\textrm{Lip}(\mathbb{R}^{n}))$.
We consider a smooth solution $\theta$ of the equation
$(TD)_{\beta}$, then there exists $C>0$ such
that for each $t\in\mathbb{R}^{+}$,
\begin{equation*}\label{eqTDSE}
 \norm{\theta}_{\widetilde{L}^{\varrho}_{t}\dot B^{s+\frac{\beta}{\varrho}}_{p,r}}\leq
 C e^{C U(t) }\bigl(\norm{\theta_{0}}_{\dot B^{s}_{p,r}}+
 \norm{f}_{\widetilde{L}^{\varrho_{1}}_{t}\dot B^{s+\frac{\beta}{\varrho_{1}}-\beta}_{p,r}}\bigr),
\end{equation*}
where $U(t):=\int_{0}^{t}\norm{\nabla u(\tau)}_{L^{\infty}}\textrm{d}\tau$. Especially, if $\varrho=\infty$, we further have
\begin{equation*}
 \norm{\theta}_{\widetilde{L}^{\infty}_{t}B^{s}_{p,r}}\leq
 C e^{C U(t) }\bigl(\norm{\theta^{0}}_{B^{s}_{p,r}}+(1+t^{1-\frac{1}{\varrho_{1}}})
 \norm{f}_{\widetilde{L}^{\varrho_{1}}_{t}B^{s+\frac{\beta}{\varrho_{1}}-\beta}_{p,r}}\bigr).
\end{equation*}

\end{proposition}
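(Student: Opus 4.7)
The plan is to localize the equation via the homogeneous Littlewood–Paley decomposition and then combine a dyadic smoothing inequality with a classical commutator estimate against the transport field.

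First I would apply $\dot\Delta_q$ to $(TD)_\beta$ to obtain
\begin{equation*}
\partial_t \dot\Delta_q \theta + u\cdot\nabla \dot\Delta_q\theta + |D|^\beta \dot\Delta_q\theta = \dot\Delta_q f - [\dot\Delta_q, u\cdot\nabla]\theta.
\end{equation*}
Next I would invoke the generalized maximum principle for advection–fractional diffusion (in the spirit of Chen–Miao–Zhang / Hmidi–Keraani, with positivity of $|D|^\beta$ on Fourier-localized $L^p$ functions) which, combined with Bernstein's inequality, produces a pointwise lower bound of the form
\begin{equation*}
\frac{d}{dt}\|\dot\Delta_q\theta\|_{L^p} + c\, 2^{q\beta} \|\dot\Delta_q\theta\|_{L^p} \le \|\dot\Delta_q f\|_{L^p} + \|[\dot\Delta_q,u\cdot\nabla]\theta\|_{L^p}
\end{equation*}
for some absolute $c>0$ independent of $q\in\mathbb{Z}$. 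Integrating this ODE gives
\begin{equation*}
\|\dot\Delta_q\theta(t)\|_{L^p} \le e^{-ct2^{q\beta}}\|\dot\Delta_q\theta_0\|_{L^p} + \int_0^t e^{-c(t-\tau)2^{q\beta}}\bigl(\|\dot\Delta_q f(\tau)\|_{L^p}+\|[\dot\Delta_q,u\cdot\nabla]\theta(\tau)\|_{L^p}\bigr)d\tau.
\end{equation*}

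Taking the $L^\varrho_t$ norm and using the elementary bound $\|e^{-ct2^{q\beta}}\|_{L^\varrho_t}\lesssim 2^{-q\beta/\varrho}$ together with Young's convolution inequality in time (this is where the gain of $\beta/\varrho$ derivatives appears, and where the condition $\varrho_1\le \varrho$ is used to absorb the source term), I would arrive at
\begin{equation*}
2^{q(s+\beta/\varrho)}\|\dot\Delta_q\theta\|_{L^\varrho_t L^p} \lesssim 2^{qs}\|\dot\Delta_q\theta_0\|_{L^p} + 2^{q(s+\beta/\varrho_1-\beta)}\|\dot\Delta_q f\|_{L^{\varrho_1}_t L^p} + 2^{qs}\|[\dot\Delta_q,u\cdot\nabla]\theta\|_{L^1_t L^p}.
\end{equation*}
Taking the $\ell^r$ norm in $q$ and invoking the classical commutator estimate (valid precisely for $-1<s<1$)
\begin{equation*}
\Bigl\|2^{qs}\|[\dot\Delta_q,u\cdot\nabla]\theta\|_{L^p}\Bigr\|_{\ell^r} \lesssim \|\nabla u\|_{L^\infty}\|\theta\|_{\dot B^s_{p,r}},
\end{equation*}
a Gronwall argument in $t$ then yields the announced estimate with the factor $e^{CU(t)}$.

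The main obstacle is twofold. First, establishing the dyadic smoothing lower bound $c\,2^{q\beta}\|\dot\Delta_q\theta\|_{L^p}$ in the presence of the transport term $u\cdot\nabla$ for general $p\in[1,\infty]$ and $\beta\in(0,1)$; this is the heart of the argument and relies on the fact that $|D|^\beta$ dominates a positive multiplier on Fourier-supported annuli (one has to introduce the convex integrability trick of Córdoba–Córdoba / the pointwise representation of $|D|^\beta$ and combine it with the divergence-free condition on $u$ to kill the transport contribution under integration). Second, the range $-1<s<1$ is sharp for the commutator estimate above, so one has to be careful not to lose regularity when passing to the $\ell^r(\mathbb{Z})$ summation; the homogeneous framework $\mathcal S'/\mathcal P$ is what permits the summation over negative $q$. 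The nonhomogeneous endpoint version for $\varrho=\infty$ follows by splitting low and high frequencies: the low-frequency piece is controlled by the $L^p$ maximum principle of Proposition \ref{prop MP} (which is where the extra $(1+t^{1-1/\varrho_1})$ factor comes from, via Hölder in time), while the high-frequency piece is handled by the homogeneous estimate just proved.
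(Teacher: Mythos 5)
Your overall architecture---localize with $\dot\Delta_q$, obtain exponential decay $e^{-ct2^{q\beta}}$ for each block, Young's inequality in time (where $\varrho_1\le\varrho$ enters), the commutator estimate valid for $-1<s<1$, then Gronwall---is exactly the route of the references the paper cites for this proposition (Hmidi--Keraani, Miao--Wu); the paper itself offers no proof. The genuine gap is in the step you yourself call the heart of the argument: the dyadic lower bound $\frac{d}{dt}\norm{\dot\Delta_q\theta}_{L^p}+c\,2^{q\beta}\norm{\dot\Delta_q\theta}_{L^p}\le\dots$ cannot be obtained for the full range $p\in[1,\infty]$ by the C\'ordoba--C\'ordoba convexity trick plus the generalized Bernstein inequality. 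That inequality, $\int |D|^{\beta}g\,|g|^{p-2}g\,\mathrm{d}x\ge c\,2^{q\beta}\norm{g}_{L^p}^{p}$ for $g$ spectrally supported in a dyadic annulus, is available for $p\in[2,\infty)$ (which is precisely why Proposition \ref{prop TDsf} in this paper is restricted to $p\in[2,\infty)$), and the $L^p$-energy formalism degenerates at the endpoints: for $p=\infty$ (and $p=1$) there is no such integral identity, and the positivity of $|D|^{\beta}$ only yields the maximum principle of Proposition \ref{prop MP}, i.e.\ no gain of $2^{q\beta}$. Since the statement claims $(p,r)\in[1,\infty]^2$ and the paper genuinely uses $p=\infty$ (the $B^{1-\alpha}_{\infty,1}$ bounds in Step 6 and in the uniqueness argument), the endpoint cases cannot be waved away. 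The standard repair, and the one used in the cited references, is of a different nature: split time into intervals on which $\int\norm{\nabla u}_{L^\infty}\mathrm{d}\tau$ is small, conjugate the localized equation by the flow of $u$, use that $e^{-t|D|^{\beta}}$ has a nonnegative integrable kernel---hence is an $L^p$-contraction for every $p\in[1,\infty]$ with decay $e^{-ct2^{q\beta}}$ on annuli---and absorb the error coming from the non-commutation of $|D|^{\beta}$ with composition by the flow; iterating over the subintervals produces the factor $e^{CU(t)}$. Your sketch (``divergence-free kills the transport contribution under integration'') delivers only the undissipated maximum principle at the endpoints, not the decay.

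A smaller, fixable point of order: after the $\ell^r$-summation your commutator contribution reads $\int_0^t\norm{\nabla u(\tau)}_{L^\infty}\norm{\theta(\tau)}_{\dot B^{s}_{p,r}}\mathrm{d}\tau$, so Gronwall acts on $t\mapsto\norm{\theta}_{\widetilde L^\infty_t\dot B^{s}_{p,r}}$; you must therefore establish the case $\varrho=\infty$ first and then feed that bound back into the general-$\varrho$ inequality, rather than applying Gronwall directly to the $\widetilde L^{\varrho}_t\dot B^{s+\beta/\varrho}_{p,r}$ quantity. Your treatment of the nonhomogeneous $\varrho=\infty$ estimate (low frequencies via the $L^p$ maximum principle with H\"older in time giving the $(1+t^{1-1/\varrho_1})$ factor, high frequencies via the homogeneous bound) is sound once the endpoint issue above is resolved.
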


\section{Modified Riesz transform and Commutators}
\setcounter{section}{3}\setcounter{equation}{0}

First we introduce a pseudo-differential operator $R_{\alpha}$
defined by $R_{\alpha}:=|D|^{-\alpha}\partial_{1}=|D|^{1-\alpha}R$,
$0<\alpha<1$, where $R:=\frac{\partial_{1}}{|D|}$ is the usual Riesz
transform. For convenience we call $R_{\alpha}$ as the modified Riesz
transform. We collect some useful properties of this operator as
follows.

\begin{proposition}\label{prop r-alpha-p}
 Let $0<\alpha<1$, $q\in\mathbb{N}$, $R_{\alpha}:=\frac{\partial_{1}}{|D|^{\alpha}}$ be the
 modified Riesz transform.
 \begin{enumerate}[(1)]
  \item
  Let $\chi\in \mathcal{D}(\mathbb{R}^{n})$. Then for every $(p,s)\in [1,\infty]\times]\alpha-1,\infty[$,
  $$\norm{|D|^{s}\chi(2^{-q}|D|)R_{\alpha}}_{\mathcal{L}(L^{p})}\lesssim 2^{q(s+1-\alpha)}.$$
  \item
  Let $\mathcal{C}$ be a ring. Then there exists $\phi\in
  \mathcal{S}(\mathbb{R}^{n})$ whose spectrum does not meet the origin such that
  $$R_{\alpha}f=2^{q(n+1-\alpha)} \phi(2^{q}\cdot)\star f$$
  for every $f$ with Fourier
  variable supported on $2^{q}\mathcal{C}$.
 \end{enumerate}
\end{proposition}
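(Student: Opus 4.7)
For part (1), my plan is to realize the operator as convolution with an $L^1$ kernel and invoke Young's inequality, which will cover the entire range $p\in[1,\infty]$. Writing the Fourier symbol of $|D|^s \chi(2^{-q}|D|) R_\alpha$ as
\begin{equation*}
m_q(\xi) = |\xi|^s\chi(2^{-q}\xi)\,\frac{i\xi_1}{|\xi|^\alpha} = 2^{q(s+1-\alpha)}\, g(2^{-q}\xi),\qquad g(\eta):= i\eta_1 |\eta|^{s-\alpha}\chi(\eta),
\end{equation*}
the inverse Fourier transform yields a convolution kernel $K_q(x)=2^{q(s+1-\alpha)}\,2^{qn}\check g(2^q x)$, whose $L^1$-norm equals $2^{q(s+1-\alpha)}\|\check g\|_{L^1}$. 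The claim thus reduces to $\check g\in L^1(\mathbb{R}^n)$, and the bound on the operator norm follows immediately from $\|K_q\ast f\|_{L^p}\leq\|K_q\|_{L^1}\|f\|_{L^p}$.

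To obtain $\check g\in L^1(\mathbb{R}^n)$, I would exploit that $g$ is compactly supported (hence $\check g$ is bounded near the origin) and perform integration by parts in the Fourier integral to control $\check g$ at infinity. Near $\eta=0$, $g$ behaves like $|\eta|^{s+1-\alpha}$ times a smooth angular factor, so $\partial^{\gamma} g$ has the local behavior $|\eta|^{s+1-\alpha-|\gamma|}$, which remains locally integrable as long as $|\gamma|<n+s+1-\alpha$. Integrating by parts $k$ times in the direction of the largest component of $x$ therefore yields $|\check g(x)|\lesssim\langle x\rangle^{-k}$ for any integer $k$ with $k<n+s+1-\alpha$. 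The hypothesis $s>\alpha-1$ makes this upper bound strictly larger than $n$, so one may pick $k>n$ and conclude that $\check g$ is integrable at infinity as well.

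For part (2), I fix an auxiliary $\widetilde\varphi\in\mathcal{D}(\mathbb{R}^n)$ supported in an enlarged ring disjoint from the origin with $\widetilde\varphi\equiv 1$ on $\mathcal{C}$. For any $f$ with $\mathrm{supp}\,\widehat f\subset 2^q\mathcal{C}$, we may freely insert the factor $\widetilde\varphi(2^{-q}\xi)$ in $\widehat{R_\alpha f}$ and rescale:
\begin{equation*}
\widehat{R_\alpha f}(\xi)=\frac{i\xi_1}{|\xi|^\alpha}\widetilde\varphi(2^{-q}\xi)\widehat f(\xi)=2^{q(1-\alpha)}\Psi(2^{-q}\xi)\widehat f(\xi),\qquad \Psi(\eta):= i\eta_1|\eta|^{-\alpha}\widetilde\varphi(\eta).
\end{equation*}
Since $\widetilde\varphi$ vanishes near the origin, $\Psi$ is smooth, compactly supported, and belongs to $\mathcal{S}(\mathbb{R}^n)$. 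Setting $\phi:=\mathcal{F}^{-1}\Psi\in\mathcal{S}(\mathbb{R}^n)$, the spectrum of $\phi$ coincides with $\mathrm{supp}\,\Psi$ and therefore does not meet the origin; the standard dilation identity for the Fourier transform then delivers the stated formula $R_\alpha f=2^{q(n+1-\alpha)}\phi(2^q\cdot)\star f$.

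The main obstacle is the first part, specifically justifying the $L^1$-integrability of $\check g$ in spite of the non-smooth factor $|\eta|^{s-\alpha}\eta_1$ at the origin. The condition $s>\alpha-1$ is sharp for this approach: it is exactly what allows the integration-by-parts argument to be iterated strictly more than $n$ times, producing enough decay of $\check g$ at infinity. One could alternatively invoke a Mihlin-type multiplier theorem, but since the endpoints $p=1,\infty$ are included in the statement, the kernel-plus-Young route is the cleaner option.
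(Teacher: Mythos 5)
Your part (2) is correct and is exactly the ``simple cut-off function technique'' the paper intends, and your overall strategy for part (1) --- realizing the operator as convolution with $K_q(x)=2^{q(s+1-\alpha)}2^{qn}\check g(2^qx)$ and invoking Young's inequality, which handles all $p\in[1,\infty]$ at once --- is also the right one (the paper instead writes $|D|^{s}\chi(2^{-q}|D|)R_{\alpha}=|D|^{s+1-\alpha}\chi(2^{-q}|D|)R$ and quotes the analogous kernel bound $|K(x)|\lesssim (1+|x|)^{-(n+s+1-\alpha)}$ from Hmidi--Keraani--Rousset). The gap is in your justification that $\check g\in L^{1}$. Integrating by parts an integer number $k$ of times requires $\partial^{\gamma}g\in L^{1}$ for $|\gamma|=k$, hence $k<n+s+1-\alpha$, while integrability of $\check g$ at infinity needs decay of order $k>n$. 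An integer $k$ with $n<k<n+s+1-\alpha$ exists only when $s+1-\alpha>1$, i.e. $s>\alpha$. In the remaining range $\alpha-1<s\le\alpha$ your argument yields at best $|\check g(x)|\lesssim |x|^{-n}$, which is not integrable at infinity; so the sentence ``the upper bound is strictly larger than $n$, hence pick $k>n$'' is precisely where the proof fails. Note that this excluded range is the one actually used later in the paper (e.g. the bound $\norm{\nabla S_{q-1}\Delta^{-1}\nabla^{\bot}R_{\alpha}\theta}_{L^{\infty}}\lesssim 2^{q(1-\alpha)}\norm{\theta}_{L^{\infty}}$ corresponds to $s=0<\alpha$), so it cannot be dismissed.

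The standard repair is to capture the fractional decay by a dyadic decomposition of the symbol around its singularity: write $g=\sum_{j\ge 0} g\,\varphi(2^{j}\cdot)$ plus a smooth compactly supported piece away from the origin. Each piece $g_j$, supported where $|\eta|\sim 2^{-j}$, obeys $\|\partial^{\gamma}g_j\|_{L^{\infty}}\lesssim 2^{-j(s+1-\alpha-|\gamma|)}$, hence $|\check g_j(x)|\lesssim 2^{-j(n+s+1-\alpha)}\min\{1,(2^{-j}|x|)^{-M}\}$ for any fixed $M>n+s+1-\alpha$; summing over $j$ gives the sharp estimate $|\check g(x)|\lesssim (1+|x|)^{-(n+s+1-\alpha)}$, which is integrable exactly because $s>\alpha-1$ (this is the kernel bound stated in the paper's remark). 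With that bound in hand, your scaling-plus-Young argument goes through verbatim and completes part (1); alternatively one can simply reduce to the quoted proposition for $|D|^{s+1-\alpha}\chi(2^{-q}|D|)R$ as the authors do.
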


\begin{remark}\label{prop r-alpha-p}
 For the point (1), since $|D|^{s}\chi(2^{-q}|D|)R_{\alpha}=|D|^{s+1-\alpha}\chi(2^{-q}|D|)R$,
 it is naturally reduced to the case treated in Proposition 3.1 of \cite{ref HmidiKR-EB}.
 We here note that $|D|^{s}\chi(|D|)R_{\alpha}$ is a convolution operator with kernel $K$ satisfying
 $|K(x)|\lesssim 1/(1+|x|)^{n+s+1-\alpha }$ for all $x\in\mathbb{R}^{n}$.
 For the point (2), it can be achieved by a simple cut-off function technique.
\end{remark}

The following Lemma is useful in dealing with the commutator terms (see e.g. \cite{ref HmidiKR-EB}).
\begin{lemma}\label{lem commutator}
 Let $p\in [1,\infty]$, $m\geq p$, $\bar{m}=\frac{m}{m-1}$ be the dual number and $f,g,h$ belong to the suitable functional
 spaces. Then,
 \begin{equation}\label{eq commutator1}
  \norm{h\star(fg)-f(h\star g)}_{L^{p}}\leq \norm{xh}_{L^{\bar{m}}}\norm{\nabla
  f}_{L^{p}} \norm{g}_{L^{m}},
 \end{equation}
 \begin{equation}\label{eq commutator2}
  \norm{h\star(fg)-f(h\star g)}_{L^{p}}\leq \norm{xh}_{L^{1}}\norm{\nabla
  f}_{L^{\infty}} \norm{g}_{L^{p}}.
 \end{equation}

\end{lemma}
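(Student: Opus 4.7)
The plan is to reduce both inequalities to a common pointwise identity obtained by expanding the finite difference $f(y)-f(x)$ via the fundamental theorem of calculus, and then distribute the resulting three-fold integral across $L^p$ using Young's and Hölder's inequalities, with an additional Minkowski integral inequality step when the intermediate exponent is finite.

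The first step is to write out the convolutions explicitly, so that
\[
 [h\star(fg)-f(h\star g)](x)=\int h(x-y)\,\bigl(f(y)-f(x)\bigr)\,g(y)\,dy.
\]
Using the identity $f(y)-f(x)=\int_{0}^{1}(y-x)\cdot\nabla f\bigl(x+s(y-x)\bigr)\,ds$, the commutator is pointwise bounded by
\[
 \int_{0}^{1}\!\!\int |x-y|\,|h(x-y)|\,|\nabla f(x+s(y-x))|\,|g(y)|\,dy\,ds.
\]

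For the second estimate \eqref{eq commutator2}, I would pull $\|\nabla f\|_{L^{\infty}}$ out and recognize the remaining double integral as $|\cdot h|\star|g|$, applying Young's inequality $L^{1}\star L^{p}\hookrightarrow L^{p}$ to land on the factor $\|xh\|_{L^{1}}\|g\|_{L^{p}}$. The $s$-integral then contributes only the harmless factor of~$1$ after Minkowski. This step is routine.

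For the first estimate \eqref{eq commutator1} the argument is more delicate because we want to keep both $\nabla f$ and $g$ in finite-exponent spaces. After the substitution $z=y-x$ and applying Minkowski in the parameter $s$, it suffices to obtain a uniform-in-$s$ bound on
\[
 \left\|\int h(-z)\,z\cdot\nabla f(x+sz)\,g(x+z)\,dz\right\|_{L^{p}_{x}}.
\]
I would next apply Minkowski's integral inequality in $x$ to move the $L^{p}_{x}$ norm inside the $z$-integral, then use Hölder in $z$ with conjugate exponents $\bar m$ and $m$ to peel off the factor $\|xh\|_{L^{\bar m}}$. The residual quantity is the $L^{m}_{z}$ norm of $\|\nabla f(\cdot+sz)\,g(\cdot+z)\|_{L^{p}_{x}}$, which by the Minkowski integral inequality $\|\cdot\|_{L^{m}_{z}L^{p}_{x}}\leq\|\cdot\|_{L^{p}_{x}L^{m}_{z}}$ — this is precisely the step that uses the hypothesis $m\geq p$ — can be interchanged, and a final Hölder combined with translation invariance separates out $\|\nabla f\|_{L^{p}}$ and $\|g\|_{L^{m}}$.

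The main obstacle is the careful bookkeeping of norm orders in the proof of \eqref{eq commutator1}: the condition $m\geq p$ is used in exactly the one place where the Minkowski integral inequality is invoked to swap $L^{p}_{x}$ and $L^{m}_{z}$, and without this hypothesis the $f$ and $g$ norms cannot be cleanly separated. Once this interchange is available, everything reduces to standard Hölder computations and the translation invariance of Lebesgue norms.
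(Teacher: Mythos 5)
Your commutator identity, the fundamental-theorem-of-calculus expansion, and your proof of \eqref{eq commutator2} are all fine (note the paper itself offers no proof of this lemma; it refers to Hmidi--Keraani--Rousset). The proof of \eqref{eq commutator1}, however, does not close. After Minkowski in $x$ and H\"older in $z$ you are left with
\[
\norm{xh}_{L^{\bar m}}\,\Bigl\|\,\norm{\nabla f(\cdot+sz)\,g(\cdot+z)}_{L^{p}_{x}}\,\Bigr\|_{L^{m}_{z}},
\]
and this quantity is not finite uniformly in $s$: by translation invariance $\norm{\nabla f(\cdot+sz)g(\cdot+z)}_{L^{p}_{x}}=\norm{\nabla f(\cdot-(1-s)z)\,g}_{L^{p}_{x}}$, which at $s=1$ no longer depends on $z$, so its $L^{m}_{z}$ norm is infinite whenever $m<\infty$ (and it blows up like $(1-s)^{-n/m}$ as $s\to1$); $m<\infty$ is exactly the case \eqref{eq commutator1} is needed for beyond \eqref{eq commutator2}, e.g.\ $m=p$ in the estimate of $\mathrm{III}^{2}$. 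The final step is also not available: after the Minkowski swap (whose direction is indeed legitimate for $m\ge p$), any H\"older separation of $\norm{\nabla f(x+sz)g(x+z)}_{L^{m}_{z}}$ in the $z$ variable either requires $\nabla f\in L^{\infty}$ or, by translation/dilation invariance, produces a bound independent of $x$, whose outer $L^{p}_{x}$ norm diverges for $p<\infty$. So neither of the last two steps can produce $\norm{\nabla f}_{L^{p}}\norm{g}_{L^{m}}$.

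No rearrangement of this outline can work if all it uses is $m\ge p$: under that hypothesis alone the inequality is actually false. Take $n=1$, $p=m=1$ (so $\bar m=\infty$), $g$ a unit bump near $0$, $f$ a fixed bump equal to $1$ on the support of $g$, and $h(x)=(1+|x|)^{-1}\mathbf{1}_{[-R,R]}(x)$: the left side of \eqref{eq commutator1} grows like $\log R$ while the right side stays bounded. The condition that the estimate really needs is $\bar m\le p$, i.e.\ $\frac1p+\frac1m\le1$ (harmless here, since the lemma is only applied with $p\ge2$, where $m\ge p$ implies it). A correct argument keeps your first two steps but orders the H\"older inequalities so that the $s$-dependence is eliminated before any $z$-norm is taken: for $p<\infty$ test against $\psi\ge0$ with $\norm{\psi}_{L^{p'}}\le1$ and, for fixed $s$ and $z$, apply H\"older in $x$ with exponents $(p,p')$ to get
\[
\int |\nabla f(x-sz)|\,|g(x-z)|\,\psi(x)\,\mathrm{d}x\le \norm{\nabla f}_{L^{p}}\,\norm{g(\cdot-z)\,\psi}_{L^{p'}_{x}},
\]
uniformly in $s$; then H\"older in $z$ peels off $\norm{xh}_{L^{\bar m}}$, and the remaining factor satisfies $\bigl\|\norm{g(\cdot-z)\psi}_{L^{p'}_{x}}\bigr\|_{L^{m}_{z}}=\bigl\||\check g|^{p'}\star|\psi|^{p'}\bigr\|_{L^{m/p'}_{z}}^{1/p'}\le\norm{g}_{L^{m}}\norm{\psi}_{L^{p'}}$ by Young's inequality (with $\check g(x):=g(-x)$); this is the one place the exponent condition enters. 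Integrating in $s$ and taking the supremum over $\psi$ gives \eqref{eq commutator1}; the case $p=\infty$ follows directly from H\"older in $z$, and \eqref{eq commutator2} is exactly as you argued.
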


The next proposition consider the crucial commutators involving the
modified Riesz transform $R_{\alpha}$.

\begin{proposition}\label{prop cmt1}
 Let $\alpha\in]0,1[$, $u$ be a smooth divergence-free vector field of $\mathbb{R}^{n}$ and $\theta$ be
 a smooth scalar function. Then,
 \begin{enumerate}[(1)]
 \item for every $s\in ]0,\alpha[$ we have
  \begin{equation*}
   \norm{[R_{\alpha},u]\theta}_{H^{s}}\lesssim_{s,\alpha}\norm{\nabla
   u}_{L^{2}}\norm{\theta}_{B^{s-\alpha}_{\infty,2}}+\norm{u}_{L^{2}}\norm{\theta}_{L^{2}}.
  \end{equation*}
 In particular, if $u:=\Delta^{-1}\nabla^{\bot}\omega$ is given by
 the Biot-Savart law and $\omega:=\Gamma+R_{\alpha}\theta$, we have for every $s\in ]0,\alpha[$
 \begin{equation}\label{eq cmtEst1}
  \norm{[R_{\alpha},u]\theta}_{H^{s}}\lesssim_{s,\alpha}\norm{\Gamma}_{L^{2}}
  \norm{\theta}_{B^{s-\alpha}_{\infty,2}}+\norm{\theta}_{L^{\infty}}\norm{\theta}_{H^{s+1-2\alpha}}+\norm{u}_{L^{2}}\norm{\theta}_{L^{2}}.
 \end{equation}
 \item
 for every $(s,p,r)\in ]-1,\alpha[\,\times\,[2,\infty[\,\times\, [1,\infty]$ we have
 \begin{equation}\label{eq cmtEst2}
  \norm{[R_{\alpha},u\cdot\nabla]\theta}_{B^{s}_{p,r}} \lesssim_{s,\alpha} \norm{\nabla u}_{L^{p}}
  \big( \norm{\theta}_{B^{s+1-\alpha}_{\infty,r}} + \norm{\theta}_{L^{p}} \big).
 \end{equation}

 \end{enumerate}
\end{proposition}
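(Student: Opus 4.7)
The plan is to apply Bony's paradifferential decomposition to the product $u\theta$ and then estimate each block separately, systematically exploiting the frequency-localization properties of $R_{\alpha}$ recorded in Proposition \ref{prop r-alpha-p} together with the convolution-commutator Lemma \ref{lem commutator}. Writing $u\theta=T_u\theta+T_\theta u+R(u,\theta)$ and using that $R_\alpha$ commutes with every Littlewood-Paley projector (both are Fourier multipliers), I get
\begin{equation*}
[R_\alpha,u]\theta=\underbrace{[R_\alpha,T_u]\theta}_{\mathrm{I}}+\underbrace{\big(R_\alpha T_\theta u-T_{R_\alpha\theta}u\big)}_{\mathrm{II}}+\underbrace{\big(R_\alpha R(u,\theta)-R(u,R_\alpha\theta)\big)}_{\mathrm{III}},
\end{equation*}
and analogously for Part~(2).

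For piece $\mathrm{I}=\sum_{q}[R_\alpha,S_{q-1}u]\Delta_q\theta$, the product $S_{q-1}u\cdot\Delta_q\theta$ is spectrally supported in a ring of size $2^q$, so Proposition \ref{prop r-alpha-p}(2) realizes $R_\alpha$ there as convolution with $h_q(x)=2^{q(n+1-\alpha)}\phi(2^q x)$ with $\|xh_q\|_{L^1}\lesssim 2^{-q\alpha}$. Inequality \eqref{eq commutator2} with $f=S_{q-1}u$ and $g=\Delta_q\theta$ yields
\begin{equation*}
\|[R_\alpha,S_{q-1}u]\Delta_q\theta\|_{L^2}\lesssim 2^{-q\alpha}\|\nabla u\|_{L^2}\|\Delta_q\theta\|_{L^\infty},
\end{equation*}
and the weighted $\ell^2$-sum gives exactly $\|\nabla u\|_{L^2}\|\theta\|_{B^{s-\alpha}_{\infty,2}}$. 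For piece $\mathrm{II}=\sum_{q}\big(R_\alpha(S_{q-1}\theta\,\Delta_q u)-R_\alpha(S_{q-1}\theta)\,\Delta_q u\big)$, after inserting $\pm\, S_{q-1}\theta\cdot R_\alpha\Delta_q u$ I reduce the $q$-th term to $[R_\alpha,S_{q-1}\theta]\Delta_q u$ plus a lower-order correction on frequency $2^q$ that is again handled by Proposition \ref{prop r-alpha-p}. Piece $\mathrm{III}$ is treated via the usual remainder frequency localization ($\Delta_q u\,\widetilde\Delta_q\theta$ lies below $2^q$), combined with Bernstein to apply $R_\alpha$. The residual $q=-1$ blocks, where Proposition \ref{prop r-alpha-p}(2) fails and only Proposition \ref{prop r-alpha-p}(1) is available, produce the low-frequency correction $\|u\|_{L^2}\|\theta\|_{L^2}$ through crude Young/Cauchy--Schwarz estimates.

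For the refined inequality \eqref{eq cmtEst1} I use the Biot--Savart decomposition $u=u_\Gamma+u_{R_\alpha\theta}$ with $u_\Gamma:=\Delta^{-1}\nabla^\bot\Gamma$ and $u_{R_\alpha\theta}:=\Delta^{-1}\nabla^\bot R_\alpha\theta$, so that $\|\nabla u_\Gamma\|_{L^2}\lesssim\|\Gamma\|_{L^2}$ and the previous bound applied to $[R_\alpha,u_\Gamma]\theta$ supplies the first and last terms on the right-hand side. For $[R_\alpha,u_{R_\alpha\theta}]\theta$, which is bilinear in $\theta$, I rerun Bony's decomposition, now exploiting that the kernel of $\Delta^{-1}\nabla^\bot R_\alpha$ gains $1-\alpha$ derivatives so each dyadic block involves $2^{q(1-\alpha)}\|\Delta_q\theta\|$; after distributing this weight between the two copies of $\theta$ one obtains the $\|\theta\|_{L^\infty}\|\theta\|_{H^{s+1-2\alpha}}$ contribution. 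For Part~(2), the key simplification is the divergence-free identity
\begin{equation*}
[R_\alpha,u\cdot\nabla]\theta=\nabla\cdot[R_\alpha,u]\theta,
\end{equation*}
so matters reduce to a $B^{s+1}_{p,r}$-bound on $[R_\alpha,u]\theta$. The same three-piece Bony analysis applies, but now Lemma \ref{lem commutator} is used in its full $L^p$-$L^m$ form (with $m=\infty$ for the para\-product piece and $m=p$ for the remainder), and the $\ell^2$ summations are replaced by $\ell^r$; the range $s\in]-1,\alpha[$ reflects exactly what the Bony estimates tolerate before the remainder piece fails to lie in a Banach algebra.

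The main obstacle is piece $\mathrm{II}$ together with the remainder $\mathrm{III}$: since $R_\alpha$ is of positive order $1-\alpha$ and is not $L^p$-bounded, neither summand on its own is controlled by the right-hand side, so the required estimate relies on the cancellation within the difference, which in turn depends on carefully identifying the frequency-$2^q$ structure of each block and repeatedly applying Proposition \ref{prop r-alpha-p}(2). The secondary difficulty is that Proposition \ref{prop r-alpha-p}(2) is unavailable on the low-frequency block $\Delta_{-1}$, forcing the crude $\|u\|_{L^2}\|\theta\|_{L^2}$ remainder to appear in Part~(1) and the $\|\theta\|_{L^p}$ correction in Part~(2).
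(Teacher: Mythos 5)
Your decomposition is in substance the same as the paper's: writing $[R_\alpha,u]\theta=[R_\alpha,T_u]\theta+(R_\alpha T_\theta u-T_{R_\alpha\theta}u)+(R_\alpha R(u,\theta)-R(u,R_\alpha\theta))$ reproduces exactly the three commutator blocks $\sum_q[R_\alpha,S_{q-1}u]\Delta_q\theta$, $\sum_q[R_\alpha,\Delta_q u]S_{q-1}\theta$, $\sum_q[R_\alpha,\Delta_q u]\widetilde\Delta_q\theta$, and your treatment of the first block (ring localization via Proposition \ref{prop r-alpha-p}(2), then Lemma \ref{lem commutator} — note you need \eqref{eq commutator1} with $m=\infty$, not \eqref{eq commutator2}) matches the paper. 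Two points, however, are genuine gaps rather than stylistic differences.

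The main one is your reduction in Part (2). The identity $[R_\alpha,u\cdot\nabla]\theta=\nabla\cdot\bigl([R_\alpha,u]\theta\bigr)$ is correct, but "reducing matters to a $B^{s+1}_{p,r}$-bound on $[R_\alpha,u]\theta$" discards the structure you need: the intermediate estimate $\norm{[R_\alpha,u]\theta}_{B^{s+1}_{p,r}}\lesssim\norm{\nabla u}_{L^p}(\norm{\theta}_{B^{s+1-\alpha}_{\infty,r}}+\norm{\theta}_{L^p})$ can only hold when the regularity index $s+1$ is below $\alpha$ — the same cap that limits Part (1) to $s<\alpha$. The obstruction sits in the $T_\theta u$ piece: $[\phi_q\star,\Delta_q u]S_{q-1}\theta$ is genuinely of size $2^{-q\alpha}\norm{\nabla\Delta_q u}_{L^p}\norm{S_{q-1}\theta}_{L^\infty}$ with no commutator gain when $\theta$ is concentrated at low frequencies (take $\theta$ a fixed low-frequency bump and $u$ spectrally supported in the ring $|\xi|\sim 2^q$; then $[R_\alpha,u]\theta\approx R_\alpha(u\theta)$ has $B^{s+1}_{p,r}$-norm of order $2^{q(s+1-\alpha)}\norm{\nabla u}_{L^p}$, unbounded as $q\to\infty$ once $s+1>\alpha$). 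Since the proposition claims $s\in]-1,\alpha[$, your reduction as stated only covers $s<\alpha-1$. The fix — which is what the paper does — is to keep the divergence inside each Bony block: for the two paraproduct pieces, $\nabla\cdot\bigl(R_\alpha(\Delta_q u\,S_{q-1}\theta)-\Delta_q u\,S_{q-1}R_\alpha\theta\bigr)=[R_\alpha,\Delta_q u\cdot\nabla]S_{q-1}\theta$, so the extra derivative lands on the low-frequency factor and Lemma \ref{lem commutator} yields $\norm{\nabla S_{q-1}\theta}_{L^\infty}$, which costs only $s<\alpha$; the divergence is left outside only for the remainder, where the frequency imbalance provides the $s>-1$ room and the $\Delta_{-1}$ block produces the $\norm{\nabla u}_{L^p}\norm{\theta}_{L^p}$ correction. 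A secondary issue concerns \eqref{eq cmtEst1}: the global splitting $u=\Delta^{-1}\nabla^{\bot}\Gamma+\Delta^{-1}\nabla^{\bot}R_\alpha\theta$ cannot be used wholesale, because $\Delta^{-1}\nabla^{\bot}R_\alpha$ is of order $-\alpha$ and is not $L^2$-bounded near $\xi=0$, so $\norm{u_\Gamma}_{L^2}$ and $\norm{\nabla u_{R_\alpha\theta}}_{L^2}$ are not controlled by the data on the right-hand side; the paper performs this substitution only inside the blocks $\Delta_q u$, $S_{q-1}u$ with $q\ge0$, and handles the $\Delta_{-1}$ block crudely — which is precisely the origin of the $\norm{u}_{L^2}\norm{\theta}_{L^2}$ term. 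Finally, your diagnosis of where cancellation is indispensable is slightly off: it is the $T_u\theta$ piece that truly needs the commutator structure, while the remainder (for $q\ge0$) and even the $T_\theta u$ piece can be estimated summand by summand, as the paper's treatment of $III^{1},III^{2}$ shows.
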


\begin{proof}[Proof of Proposition \ref{prop cmt1}]
(1) We here only treat the special case to get \eqref{eq cmtEst1}. First due to Bony's decomposition we split
the commutator term into three parts
\begin{equation*}
\begin{split}
 [R_{\alpha},u]\theta & =\sum_{q\in
 \mathbb{N}}[R_{\alpha},S_{q-1}u]\Delta_{q}\theta+ \sum_{q\in
 \mathbb{N}}[R_{\alpha},\Delta_{q}u]S_{q-1}\theta + \sum_{q\geq -1}
 [R_{\alpha}, \Delta_{q}u]\widetilde{\Delta}_{q}\theta \\
 & := I + II + III.
\end{split}
\end{equation*}

$\bullet$ \textit{Estimation of $I$}.

Denote
$I_{q}:=[R_{\alpha},S_{q-1}u]\Delta_{q}\theta$. Since for each $q\in \mathbb{N}$ the Fourier transform of $I_{q}$ is supported in a
ring of size $2^{q}$, from the point (2)
of Proposition \ref{prop r-alpha-p} there exists $\phi\in \mathcal{S}$
whose spectrum is away from the origin such that
\begin{equation*}
 I_{q}=[\phi_{q}\star,S_{q-1}\Delta^{-1}\nabla^{\bot}\Gamma]\Delta_{q}\theta+
 [\phi_{q}\star,S_{q-1}\Delta^{-1}\nabla^{\bot}R_{\alpha}\theta]\Delta_{q}\theta,
\end{equation*}
where $\phi_{q}(x):=2^{q(n+1-\alpha)}\phi(2^{q}x)$. Taking advantage of
Lemma \ref{lem commutator} and the point (1) of Proposition \ref{prop
r-alpha-p} we obtain
\begin{equation*}
\begin{split}
 \norm{I_{q}}_{L^{2}}\leq & \norm{x \phi_{q}}_{L^{1}} \norm{\nabla
 S_{q-1}\Delta^{-1}\nabla^{\bot}\Gamma}_{L^{2}}\norm{\Delta_{q}\theta}_{L^{\infty}}+
 \\ & + \norm{x \phi_{q}}_{L^{1}} \norm{\nabla
 S_{q-1}\Delta^{-1}\nabla^{\bot}R_{\alpha}\theta}_{L^{\infty}}\norm{\Delta_{q}\theta}_{L^{2}}
 \\
 \lesssim & 2^{-q\alpha} \norm{\Gamma}_{L^{2}}\norm{\Delta_{q}\theta}_{L^{\infty}}
 + 2^{-q \alpha} 2^{q (1-\alpha)} \norm{\theta}_{L^{\infty}}\norm{\Delta_{q}\theta}_{L^{2}}.
\end{split}
\end{equation*}
Thus we directly have
\begin{equation*}
\begin{split}
 \norm{I}_{H^{s}} & \thickapprox
 \norm{2^{qs}\norm{I_{q}}_{L^{2}}}_{\ell^{2}} \\
  &\lesssim\norm{\Gamma}_{L^{2}}
  \norm{\theta}_{B^{s-\alpha}_{\infty,2}}+\norm{\theta}_{L^{\infty}}\norm{\theta}_{H^{s+1-2\alpha}}.
\end{split}
\end{equation*}

$\bullet$ \textit{Estimation of $II$}.

Denote
$II_{q}:=[R_{\alpha},\Delta_{q}u]S_{q-1}\theta$. As before we have
\begin{equation*}
 II_{q}=[\phi_{q}\star,\Delta_{q}\Delta^{-1}\nabla^{\bot}\Gamma]S_{q-1}\theta+
[\phi_{q}\star,\Delta_{q}\Delta^{-1}\nabla^{\bot}R_{\alpha}\theta]S_{q-1}\theta,
\end{equation*}
and again using Lemma \ref{lem commutator} we get
\begin{equation*}
\begin{split}
 \norm{II_{q}}_{L^{2}}\leq & \norm{x \phi_{q}}_{L^{1}} \norm{\nabla
 \Delta_{q}\Delta^{-1}\nabla^{\bot}\Gamma}_{L^{2}}\norm{S_{q-1}\theta}_{L^{\infty}}+
 \\ & + \norm{x \phi_{q}}_{L^{1}} \norm{\nabla
 \Delta_{q}\Delta^{-1}\nabla^{\bot}R_{\alpha}\theta}_{L^{2}}\norm{S_{q-1}\theta}_{L^{\infty}}
 \\
 \lesssim & 2^{-q\alpha} \norm{\Gamma}_{L^{2}}\norm{S_{q-1}\theta}_{L^{\infty}}
 + 2^{-q \alpha} 2^{q (1-\alpha)} \norm{\Delta_{q}\theta}_{L^{2}}\norm{\theta}_{L^{\infty}}.
\end{split}
\end{equation*}
Thus discrete Young inequality leads to for every $s<\alpha$
\begin{equation*}
\begin{split}
 \norm{II}_{H^{s}} & \thickapprox
 \norm{2^{qs}\norm{II_{q}}_{L^{2}}}_{\ell^{2}} \\
  &\lesssim\norm{\Gamma}_{L^{2}}
  \norm{\theta}_{B^{s-\alpha}_{\infty,2}}+\norm{\theta}_{L^{\infty}}\norm{\theta}_{H^{s+1-2\alpha}}.
\end{split}
\end{equation*}

$\bullet$ \textit{Estimation of $III$}.

We divide the term into three parts
\begin{equation*}
\begin{split}
 III &= \sum_{q\geq 0}R_{\alpha}(\Delta_{q}
 u\widetilde{\Delta}_{q}\theta) + \sum_{q\geq 0}\Delta_{q}
 u (R_{\alpha}\widetilde{\Delta}_{q}\theta) + [R_{\alpha},\Delta_{-1}
 u]\widetilde{\Delta}_{-1}\theta \\
 &:= III^{1}+III^{2}+III^{3}.
\end{split}
\end{equation*}
From direct computations we have
\begin{equation*}
\begin{split}
 2^{js}\norm{\Delta_{j}III^{1}}_{L^{2}} & \lesssim 2^{j(s+1-\alpha)}
 \sum_{q\geq j-4,q\geq 0} \norm{\Delta_{q}u}_{L^{2}}
 \norm{\widetilde{\Delta}_{q}\theta}_{L^{\infty}} \\
 & \lesssim 2^{j(s+1-\alpha)}\sum_{q\geq j-4}(2^{-q}\norm{\Delta_{q}\Gamma
 }_{L^{2}}+2^{-q\alpha}\norm{\Delta_{q}\theta
 }_{L^{2}})\norm{\widetilde{\Delta}_{q}\theta}_{L^{\infty}}
 \\ & \lesssim \sum_{q\geq j-4}2^{(j-q)(s+1-\alpha)}\Big(2^{q(s-\alpha)}\norm{\widetilde{\Delta}_{q}\theta}_{L^{\infty}}\norm{\Gamma
 }_{L^{2}}+2^{q(s+1-2\alpha)}\norm{\Delta_{q}\theta
 }_{L^{2}}\norm{\theta}_{L^{\infty}}\Big).
\end{split}
\end{equation*}
Thus discrete Young inequality (needing $s+1-\alpha>0$) yields
\begin{equation*}
 \norm{III^{1}}_{H^{s}}\lesssim_{s,\alpha}\norm{\Gamma}_{L^{2}}
  \norm{\theta}_{B^{s-\alpha}_{\infty,2}}+\norm{\theta}_{L^{\infty}}\norm{\theta}_{H^{s+1-2\alpha}}.
\end{equation*}
For $III^{2}$, by using the point (1) of Proposition \ref{prop r-alpha-p} we obtain
\begin{equation*}
\begin{split}
 2^{js}\norm{\Delta_{j}III^{2}}_{L^{2}} & \lesssim 2^{js}
 \sum_{q\geq j-4,q\geq 0} \norm{\Delta_{q}u}_{L^{2}}
 \norm{R_{\alpha}\widetilde{\Delta}_{q}\theta}_{L^{\infty}} \\
 & \lesssim 2^{js}\sum_{q\geq j-4}\big(2^{-q}\norm{\Delta_{q}\Gamma
 }_{L^{2}}+2^{-q\alpha}\norm{\Delta_{q}\theta
 }_{L^{2}}\big)2^{q(1-\alpha)}\norm{\widetilde{\Delta}_{q}\theta}_{L^{\infty}}
 \\ & \lesssim \sum_{q\geq j-4}2^{(j-q)s}\Big(2^{q(s-\alpha)}\norm{\widetilde{\Delta}_{q}\theta}_{L^{\infty}}\norm{\Gamma
 }_{L^{2}}+2^{q(s+1-2\alpha)}\norm{\Delta_{q}\theta
 }_{L^{2}}\norm{\theta}_{L^{\infty}}\Big).
\end{split}
\end{equation*}
Using convolution inequality (needing $s>0$) again we have
\begin{equation*}
 \norm{III^{2}}_{H^{s}}\lesssim_{s,\alpha}\norm{\Gamma}_{L^{2}}
  \norm{\theta}_{B^{s-\alpha}_{\infty,2}}+\norm{\theta}_{L^{\infty}}\norm{\theta}_{H^{s+1-2\alpha}}.
\end{equation*}
For $III^{3}$, since $\Delta_{j}III^{3}=0$ for every $j\geq 3$, then from Bernstein inequality and Calder\'on-Zygmund theorem we immediately have
\begin{equation*}
\begin{split}
 \norm{III^{3}}_{H^{s}} & \lesssim  \norm{[R_{\alpha},\Delta_{-1}u]\widetilde{\Delta}_{-1}\theta}_{L^{2}} \\
 & \lesssim
  \norm{\Delta_{-1}u}_{L^{2}}(\norm{\widetilde{\Delta}_{-1}\theta}_{L^{2}}+
 \norm{R_{\alpha}\widetilde{\Delta}_{-1}\theta}_{L^{2}}) \\
 & \lesssim \norm{u}_{L^{2}} \norm{\theta}_{L^{2}}.
\end{split}
\end{equation*}
This concludes the estimate \eqref{eq cmtEst1}.

(2)Once again using Bony's decomposition yields
\begin{equation*}
\begin{split}
 [R_{\alpha},u\cdot\nabla]\theta & =\sum_{q\in
 \mathbb{N}}[R_{\alpha},S_{q-1}u\cdot\nabla ]\Delta_{q}\theta + \sum_{q\in
 \mathbb{N}}[R_{\alpha},\Delta_{q}u\cdot\nabla]S_{q-1}\theta + \sum_{q\geq -1}
 [R_{\alpha}, \Delta_{q}u\cdot\nabla]\widetilde{\Delta}_{q}\theta \\
 & := \mathrm{I} + \mathrm{II} + \mathrm{III}.
\end{split}
\end{equation*}
For $\mathrm{I}$, since for every $q\in\mathbb{N}$ the Fourier transform of $S_{q-1}u\Delta_{q}\theta$ is supported in a ring of size $2^{q}$, then
from the point (2) of Proposition \ref{prop r-alpha-p} and Lemma \ref{lem commutator}, we have for every $j\geq -1$
\begin{equation*}
\begin{split}
 \norm{\Delta_{j}\mathrm{I}}_{L^{p}} & \lesssim \sum_{|q-j|\leq 4} \norm{[\phi_{q}\star,S_{q-1}u\cdot\nabla]\Delta_{q}\theta}_{L^{p}} \\
 & \lesssim \sum_{|q-j|\leq 4} 2^{-q\alpha}\norm{\nabla u}_{L^{p}} 2^{q}\norm{\Delta_{q}\theta}_{L^{\infty}}\\
 & \lesssim c_{j}2^{-js}\norm{\nabla u}_{L^{p}} \norm{\theta}_{B^{s+1-\alpha}_{\infty,r}},
\end{split}
\end{equation*}
where $\phi_{q}(x):=2^{q(n+1-\alpha)}\phi(2^{q}x)$ with $\phi\in\mathcal{S}$ and $(c_{j})_{j\geq -1}$ with $\norm{c_{j}}_{\ell^{r}}=1$. Thus we obtain
\begin{equation*}
 \norm{\mathrm{I}}_{B^{s}_{p,r}} \lesssim \norm{\nabla u}_{L^{p}} \norm{\theta}_{B^{s+1-\alpha}_{\infty,r}}.
\end{equation*}
For $\mathrm{II}$, as above we have for every $s<\alpha$ and $j\geq -1$
\begin{equation*}
\begin{split}
 \norm{\Delta_{j}\mathrm{II}}_{L^{p}} & \lesssim \sum_{|q-j|\leq 4,q\in \mathbb{N}} \norm{[\phi_{q}\star,\Delta_{q}u\cdot\nabla]S_{q-1}\theta}_{L^{p}} \\
 & \lesssim \sum_{|q-j|\leq 4}  2^{-q\alpha}\norm{\nabla u}_{L^{p}}\norm{\nabla S_{q-1}\theta}_{L^{\infty}} \\
 & \lesssim \norm{\nabla u}_{L^{p}} \sum_{|q-j|\leq 4} 2^{-qs} \sum_{q'\leq q-2} 2^{(q'-q)(\alpha-s)}2^{q'(s+1-\alpha)} \norm{\Delta_{q'} \theta}_{L^{\infty}}\\
 & \lesssim c_{j} 2^{-js}\norm{\nabla u}_{L^{p}}\norm{\theta}_{B^{s+1-\alpha}_{\infty,r}},
\end{split}
\end{equation*}
with $(c_{j})_{j\geq -1}$ satisfying $\norm{c_{j}}_{\ell^{r}}=1$. Thus
\begin{equation*}
 \norm{\mathrm{II}}_{B^{s}_{p,r}} \lesssim \norm{\nabla u}_{L^{p}} \norm{\theta}_{B^{s+1-\alpha}_{\infty,r}}.
\end{equation*}
For $\mathrm{III}$, we further write
\begin{equation*}
 \mathrm{III}= \sum_{q\geq 0 } \mathrm{div}[R_{\alpha},\Delta_{q}u]\widetilde{\Delta}_{q}\theta  +
 [\partial_{i} R_{\alpha},\Delta_{-1}u^{i}]\widetilde{\Delta}_{-1}\theta:= \mathrm{III}^{1} + \mathrm{III}^{2}.
\end{equation*}
For every $s>-1$, we treat the term $\mathrm{III}^{1}$ as follows
\begin{equation*}
\begin{split}
 \norm{\Delta_{j}\mathrm{III^{1}}}_{L^{p}} & \leq \sum_{q\geq j-4,q\geq 0}
 \norm{\Delta_{j}\mathrm{div} R_{\alpha}(\Delta_{q}u\widetilde{\Delta}_{q}\theta)}_{L^{p}} +\sum_{q\geq j-4,q\geq 0}
 \norm{\Delta_{j}\mathrm{div} (\Delta_{q}u R_{\alpha}\widetilde{\Delta}_{q}\theta)}_{L^{p}} \\
 & \lesssim \norm{\nabla u}_{L^{p}} 2^{-js} \sum_{q\geq j-4 }\big (2^{(j-q)(s+2-\alpha)}+2^{(j-q)(s+1 )}\big ) 2^{q(s+1-\alpha)}\norm{ \widetilde{\Delta}_{q}\theta }_{L^{\infty}} \\
 & \lesssim c_{j} 2^{-js}  \norm{\nabla u}_{L^{p}}\norm{\theta}_{B^{s+1-\alpha}_{\infty,r}},
\end{split}
\end{equation*}
with $(c_{j})_{j\geq -1}$ satisfying $\norm{c_{j}}_{\ell^{r}}=1$. Thus
\begin{equation*}
 \norm{\mathrm{III}^{1}}_{B^{s}_{p,r}} \lesssim  \norm{\nabla u}_{L^{p}}\norm{\theta}_{B^{s+1-\alpha}_{\infty,r}}.
\end{equation*}
For the second term, from the spectral property, there exist $\chi'\in \mathcal{D}(\mathbb{R}^{n})$ such that
\begin{equation*}
 \mathrm{III}^{2}  = [\partial_{i} R_{\alpha} \chi'(D),\Delta_{-1}u^{i}]\widetilde{\Delta}_{-1}\theta.
\end{equation*}
The Proposition \ref{prop r-alpha-p} shows that $\partial_{i}R_{\alpha}\chi'(D)$ is a convolution operator with kernel $h'$
satisfying
$$|h'(x)|\leq C (1+|x|)^{-n-2+\alpha}, \quad \forall x\in\mathbb{R}^{n}.$$
Thus from the fact that $\Delta_{j}\mathrm{III}^{2}=0$ for every
$j\geq 3$ and by applying Lemma \ref{lem commutator} with $m=p\geq 2$, we have
\begin{equation*}
\begin{split}
 \norm{\mathrm{III^{2}}}_{B^{s}_{p,r}} & \lesssim \norm{ [h'\star, \Delta_{-1}u ]\widetilde{\Delta}_{-1}\theta}_{L^{p}} \\
 & \lesssim  \norm{xh'}_{L^{\bar{p}}}\norm{\nabla \Delta_{-1}u}_{L^{p}} \norm{\widetilde{\Delta}_{-1}\theta}_{L^{p}} \\
 & \lesssim \norm{\nabla u}_{L^{p}}\norm{\theta}_{L^{p}}.
\end{split}
\end{equation*}
This ends the proof of estimate \eqref{eq cmtEst2}.


\end{proof}

\section{Proof of Theorem \ref{thm 1.1}}
\setcounter{section}{4}\setcounter{equation}{0}

\subsection{A priori estimates}

\begin{proposition}\label{prop apes1}
 Let ($u$,$\theta$) be a solution of the Boussinesq-Navier-Stokes
 system \eqref{eq 1.1} such that $(u^{0},\theta^{0})\in L^{2}\times L^{2}$. Then for every $t\in\mathbb{R}^{+}$
 \begin{equation*}
  \norm{\theta(t)}^{2}_{L^{2}}+2\norm{\theta}^{2}_{L_{t}^{2}\dot{H}^{\frac{\beta}{2}}}\leq \norm{\theta^{0}}_{L^{2}}^{2},
 \end{equation*}
 \begin{equation*}
  \norm{u(t)}^{2}_{L^{2}}+\norm{u}^{2}_{L_{t}^{2}\dot{H}^{\frac{\alpha}{2}}}\leq C_{0}(1+t^{2}),
 \end{equation*}
 Besides if $\theta^{0}\in L^{p}$ for some $p\in [1,\infty]$, we further have
 \begin{equation*}
   \norm{\theta(t)}_{L^{p}}\leq \norm{\theta^{0}}_{L^{p}}.
 \end{equation*}
\end{proposition}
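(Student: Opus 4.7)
The plan is to carry out classical energy estimates, exploiting the divergence-free condition on $u$ to kill the transport terms, and invoking the maximum principle for the temperature equation (Proposition \ref{prop MP}) where appropriate.

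I would start with the $L^{2}$ bound on $\theta$. Since $|D|^{\beta}$ is self-adjoint and $\mathrm{div}\,u=0$, pairing the temperature equation with $\theta$ in $L^{2}$ yields
\begin{equation*}
 \frac{1}{2}\frac{d}{dt}\norm{\theta}_{L^{2}}^{2}+\norm{\theta}_{\dot H^{\beta/2}}^{2}=0,
\end{equation*}
and integrating in time produces the first identity. The $L^{p}$ bound on $\theta$ in the final assertion is an immediate application of Proposition \ref{prop MP} with $f=0$, so no further argument is needed.

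For the velocity estimate, I would take the $L^{2}$ inner product of the momentum equation with $u$. Since $\mathrm{div}\,u=0$, the pressure term and the convection term vanish, and the dissipation contributes $\norm{u}_{\dot H^{\alpha/2}}^{2}$. The forcing $\theta e_{2}$ is controlled by Cauchy--Schwarz, giving
\begin{equation*}
 \frac{1}{2}\frac{d}{dt}\norm{u}_{L^{2}}^{2}+\norm{u}_{\dot H^{\alpha/2}}^{2}\leq \norm{\theta(t)}_{L^{2}}\norm{u(t)}_{L^{2}}\leq \norm{\theta^{0}}_{L^{2}}\norm{u(t)}_{L^{2}},
\end{equation*}
where the last step uses the bound on $\theta$ just established. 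Dropping the nonnegative dissipation term and dividing by $\norm{u}_{L^{2}}$ gives $\frac{d}{dt}\norm{u}_{L^{2}}\leq \norm{\theta^{0}}_{L^{2}}$, whence $\norm{u(t)}_{L^{2}}\leq \norm{u^{0}}_{L^{2}}+t\norm{\theta^{0}}_{L^{2}}$. Squaring yields $\norm{u(t)}_{L^{2}}^{2}\lesssim C_{0}(1+t^{2})$. Substituting this pointwise bound back into the energy identity and integrating on $[0,t]$ controls the dissipation term $\norm{u}_{L^{2}_{t}\dot H^{\alpha/2}}^{2}$ by the same quantity $C_{0}(1+t^{2})$, completing the second inequality.

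There is no real obstacle here: each step is a standard energy or maximum principle computation, and the only point to be mildly careful about is making sure the forcing term $\theta e_{2}$ is absorbed via the already-proven $L^{\infty}_{t}L^{2}$ control on $\theta$, rather than via a Gronwall loop that would produce an exponential in $t$. Using the linear-in-$t$ bound is what yields the polynomial growth $1+t^{2}$ stated in the proposition.
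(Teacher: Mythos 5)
Your proposal is correct and follows essentially the same route as the paper: the same $L^{2}$ energy identity for $\theta$, the maximum principle (Proposition \ref{prop MP}) for the $L^{p}$ bound, and for $u$ the same device of first deriving the linear-in-time bound $\norm{u(t)}_{L^{2}}\leq \norm{u^{0}}_{L^{2}}+t\norm{\theta^{0}}_{L^{2}}$ and then reinserting it into the energy inequality to control $\norm{u}_{L^{2}_{t}\dot H^{\alpha/2}}^{2}$ without a Gronwall exponential. No substantive differences.
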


\begin{proof}[Proof of Proposition \ref{prop apes1}]
 The $L^{p}$ estimate for $\theta$ is a direct consequence of Proposition \ref{prop MP}. For the $L^{2}$ estimate of $\theta$, by taking a $L^{2}$-inner product with $\theta$
 in the temperature equation we have
 \begin{equation*}
  \frac{1}{2}\frac{d}{dt} \norm{\theta(t)}_{L^{2}}^{2} + \norm{\theta(t)}_{\dot H^{\frac{\beta}{2}}}^{2} = 0.
 \end{equation*}
Thus integrating in time leads to the desired estimate. For the $L^{2}$ estimate of $u$, from the standard $L^{2}$ energy estimate, we get
 \begin{equation*}
  \frac{1}{2}\frac{d}{dt} \norm{u(t)}_{L^{2}}^{2} + \norm{u(t)}_{\dot H^{\frac{\alpha}{2}}}^{2} \leq \norm{u(t)}_{L^{2}} \norm{\theta(t)}_{L^{2}}.
 \end{equation*}
Thus we obtain
 \begin{equation*}
  \norm{u(t)}_{L^{2}} \leq \norm{u^{0}}_{L^{2}}+ \int_{0}^{t}\norm{\theta(\tau)}_{L^{2}}\mathrm{d}\tau \leq \norm{u^{0}}_{L^{2}}+ \norm{\theta^{0}}_{L^{2}} t.
 \end{equation*}
Putting this inequality in the previous one yields
 \begin{equation*}
  \frac{1}{2}\frac{d}{dt} \norm{u(t)}_{L^{2}}^{2} + \norm{u(t)}_{\dot H^{\frac{\alpha}{2}}}^{2} \leq
  \norm{\theta^{0}}_{L^{2}}\big(\norm{u^{0}}_{L^{2}}+ \norm{\theta^{0}}_{L^{2}} t \big).
 \end{equation*}
Integrating in time again leads to the desired result.

\end{proof}

\begin{proposition}\label{prop apes2}
 Let $\frac{6-\sqrt{6}}{4}<\alpha< 1$, $1-\alpha<\beta\leq \min\{ \frac{7+2\sqrt{6}}{5}\alpha-2,\frac{\alpha(1-\alpha)}{\sqrt{6}-2\alpha},2-2\alpha\}$, ($u$,$\theta$)
 be a solution of the Boussinesq-Navier-Stokes
 system \eqref{eq 1.1} such that $\theta^{0}\in  H^{1-\alpha}\cap B^{1-\alpha}_{\infty,1}$ and $u^{0}\in H^{1}\cap \dot
 W^{1,p}$ with $p\in ]\frac{2}{\beta+\alpha-1},\infty[$. Then for every $\sigma\in[1,\frac{\alpha}{1-\alpha+2/p}[$
 \begin{equation}\label{eq kest}
  \norm{u}_{L^{\sigma}_{t}B^{1}_{\infty,1}}\leq
  \Phi_{3}(t),
 \end{equation}
  \begin{equation*}
  \norm{\theta}_{L^{\infty}_{t}( H^{1-\alpha}\cap
  B^{1-\alpha}_{\infty,1})}+ \norm{\omega}_{L^{\infty}_{t}(L^{2}\cap
  L^{p})}\leq  \Phi_{3}(t).
 \end{equation*}
\end{proposition}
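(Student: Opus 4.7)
The plan is to study the coupled system for $(\omega,\theta)$ and to introduce the auxiliary unknown $\Gamma:=\omega-R_\alpha\theta$. A direct computation, using $|D|^\alpha R_\alpha=\partial_1$, shows that $\Gamma$ satisfies
\begin{equation*}
  \partial_t\Gamma+u\cdot\nabla\Gamma+|D|^\alpha\Gamma=|D|^{\beta-\alpha}\partial_1\theta-[R_\alpha,u\cdot\nabla]\theta,
\end{equation*}
so the critical forcing $\partial_1\theta$ present in the vorticity equation is killed and replaced by the milder $|D|^{1+\beta-\alpha}$-type term (recall $\alpha>\beta$) plus a commutator. This is the sole reason for working with $\Gamma$.

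First I would derive an $L^\infty_tL^2$ bound on $\Gamma$ by testing its equation against $\Gamma$ itself. The dissipation $\|\Gamma\|_{\dot H^{\alpha/2}}^2$ will absorb the forcing $|D|^{\beta-\alpha}\partial_1\theta$ by duality, which requires controlling $\theta$ in $\dot H^{1+\beta-3\alpha/2}$; the commutator term is handled by \eqref{eq cmtEst1} with a suitable $s\in(0,\alpha)$ after invoking $\textrm{div}\,u=0$ to rewrite $[R_\alpha,u\cdot\nabla]\theta$. The needed regularity of $\theta$ comes from the smoothing estimate of Proposition~\ref{propRF} (combined with Proposition~\ref{prop apes1}), and this is where the conditions $\beta>1-\alpha$ and $\beta\leq\frac{\alpha(1-\alpha)}{\sqrt{6}-2\alpha}$ are forced: they are exactly what guarantees that the gain from the temperature diffusion beats the loss $1-\alpha$ of $R_\alpha$. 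Once $\Gamma\in L^\infty_tL^2$ is established, the identity $\omega=\Gamma+R_\alpha\theta$ combined with $R_\alpha:H^{1-\alpha}\to L^2$ yields $\omega\in L^\infty_tL^2$ and hence $u\in L^\infty_tH^1$ via Biot--Savart.

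Next I would pass to the $L^p$-estimate on $\omega$. Here the loss from $R_\alpha$ cannot be absorbed by a simple Sobolev embedding, so instead I return directly to the vorticity equation $\partial_t\omega+u\cdot\nabla\omega+|D|^\alpha\omega=\partial_1\theta$ and apply Proposition~\ref{prop MP}. Controlling $\|\partial_1\theta\|_{L^1_tL^p}$ reduces to a direct application of the smoothing effect Proposition~\ref{prop TDsf} (and Proposition~\ref{propRF}) to the temperature equation; the condition $p>\frac{2}{\alpha+\beta-1}$ is precisely what is required for the gained regularity $\beta$ to compensate one full derivative in an $L^p$ framework. With $\omega\in L^\infty_tL^p$ in hand, $u\in L^\infty_t\dot W^{1,p}$ follows by Calder\'on--Zygmund.

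For the final bound $u\in L^\sigma_tB^1_{\infty,1}$ (together with $\theta\in L^\infty_tB^{1-\alpha}_{\infty,1}$), I would use Biot--Savart plus low-frequency control by $\|u\|_{L^2}$ to reduce matters to $\omega\in L^\sigma_tB^0_{\infty,1}$, and then apply Proposition~\ref{propRF} to the $\omega$-equation with forcing $\partial_1\theta$ estimated through the $B^{1-\alpha}_{\infty,1}$ bound on $\theta$, itself obtained via Proposition~\ref{prop TDsf}. The restriction $\sigma<\alpha/(1-\alpha+2/p)$ comes from the interpolation between $L^p$ and $L^\infty$ in the vorticity bound. The whole scheme closes by a Gronwall loop in which $U(t):=\int_0^t\|\nabla u\|_{L^\infty}$ is absorbed by the gained regularity. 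The main obstacles are: (i) arranging all indices so that the commutator estimate \eqref{eq cmtEst2}, the smoothing gain $\beta$, and the $R_\alpha$-loss $1-\alpha$ balance simultaneously in each step -- this is what dictates the three upper bounds on $\beta$ appearing in the statement; (ii) making sure the Gronwall loop closes without exceeding triple-exponential growth, which forces $\alpha>(6-\sqrt 6)/4$ and thus the lower bound on $\alpha$ in the hypothesis.
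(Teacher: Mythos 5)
Your starting point is the right one and matches the paper (introduce $\Gamma=\omega-R_\alpha\theta$, derive its equation with forcing $|D|^{\beta}R_\alpha\theta=|D|^{\beta-\alpha}\partial_1\theta$ plus the commutator, and run an $L^2$ energy estimate using \eqref{eq cmtEst1}), but the way you propose to pass from $\Gamma$ back to $\omega$ contains two genuine gaps. First, deducing $\omega\in L^\infty_tL^2$ from $\Gamma\in L^\infty_tL^2$ "combined with $R_\alpha:H^{1-\alpha}\to L^2$" presupposes $\theta\in L^\infty_tH^{1-\alpha}$; but propagating the $H^{1-\alpha}$ norm of $\theta$ requires Proposition \ref{propRF}, hence $\int_0^t\norm{\nabla u}_{L^\infty}\,\mathrm{d}\tau<\infty$, which is precisely the quantity you are trying to prove — the argument is circular. (In the paper, $\theta\in L^\infty_t(H^{1-\alpha}\cap B^{1-\alpha}_{\infty,1})$ is only obtained in the last step, after $\norm{u}_{L^1_tB^1_{\infty,1}}$ is already controlled.) Second, your route to $\omega\in L^\infty_tL^p$ — applying Proposition \ref{prop MP} directly to the vorticity equation and controlling $\norm{\partial_1\theta}_{L^1_tL^p}$ by "a direct application" of Proposition \ref{prop TDsf} — fails: the smoothing effect gains at most $\beta<1$ derivatives in $L^1_t$, so $\norm{\Delta_q\partial_1\theta}_{L^1_tL^p}\lesssim 2^{q(1-\beta)}(\cdots)$ is not summable in $q$; even Proposition \ref{propRF} (which again needs the unavailable Lipschitz bound) only yields $1-\alpha+\beta<1$ derivatives. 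This is exactly why the paper never abandons the $\Gamma$ formulation: the $L^p$ bound on $\omega$ is obtained from the $\Gamma$ equation, whose forcing $|D|^\beta R_\alpha\theta$ has order $1+\beta-\alpha<1$, together with \eqref{eq cmtEst2} with $s=0$, and only after the Lipschitz bound on $u$ is in hand.

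What is missing in between is the actual core of the paper's proof, Steps 2--5: (i) $L^r$ estimates of $\Gamma$ for $r$ up to $r_0=\frac{8+2\sqrt6}{5}$, using the generalized Bernstein inequality and Lemma \ref{lem interpolation}; (ii) the bound $\norm{\omega}_{L^1_tL^{\widetilde r}}\le\Phi_1(t)$, obtained not by Sobolev embedding but by writing $\omega=\Gamma+R_\alpha\theta$, estimating $R_\alpha\theta$ through Proposition \ref{prop TDsf} so that the right-hand side contains $Ct^{1-1/\rho}\norm{\omega}_{L^1_tL^{\widetilde r}}$, absorbing this for small time and iterating by time translation; (iii) the frequency-localized smoothing estimate $\norm{\Gamma}_{\widetilde L^\sigma_tB^{2/r}_{r,1}}\le\Phi_1(t)$, whose summability requirement $\frac2r+1-\alpha-\frac\alpha\sigma<0$, optimized over $r\in[2,4[$ against the constraint $\alpha>\frac{9r-12}{8r-8}$ from (i), is what produces the threshold $\alpha>\frac{6-\sqrt6}{4}$ and the upper bounds $\frac{7+2\sqrt6}{5}\alpha-2$ and $\frac{\alpha(1-\alpha)}{\sqrt6-2\alpha}$ on $\beta$; and (iv) the deduction of $\norm{u}_{L^1_tB^1_{\infty,1}}$ from (ii)--(iii) using $\beta>1-\alpha+\frac2{p_\alpha}$. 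Your attributions of the parameter restrictions — the lower bound on $\alpha$ to "the Gronwall loop not exceeding triple-exponential growth", and $\beta\le\frac{\alpha(1-\alpha)}{\sqrt6-2\alpha}$ to the $L^2$ step — do not reflect where these conditions actually arise, and without steps (ii)--(iii) the scheme as proposed does not close.
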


\begin{proof}[Proof of Proposition \ref{prop apes2}]
Denote $\Gamma:=\omega-R_{\alpha}\theta$. Considering the vorticity
equation
\begin{equation*}
 \partial_{t}\omega+u\cdot\nabla\omega+|D|^{\alpha}\omega=\partial_{1}\theta,
\end{equation*}
and the acting of $R_{\alpha}$ on the temperature equation
\begin{equation*}
 \partial_{t}R_{\alpha}\theta+u\cdot\nabla
 R_{\alpha}\theta+|D|^{\beta}R_{\alpha}\theta=-[R_{\alpha},u\cdot\nabla]\theta,
\end{equation*}
we directly have
\begin{equation}\label{eq eqInter}
 \partial_{t}\Gamma+u\cdot\nabla\Gamma+|D|^{\alpha}\Gamma=[R_{\alpha},u\cdot\nabla]\theta+|D|^{\beta}R_{\alpha}\theta.
\end{equation}
To obtain the key estimate \eqref{eq kest} with $\sigma=1$, the procedure below is
that we first obtain some "good" estimates on $\Gamma$ through
studying the interim equation \eqref{eq eqInter}, and then
combining with the estimates of $\theta$ we return to some
appropriate estimates on $\omega$ which lead to our target.

$\bullet$\textit{Step 1: Estimation of $\norm{\Gamma}_{L^{\infty}_{t}L^{2}}$}

From the classical energy method we get for every $s_{1}\in
[0,\frac{\alpha}{2}]$
\begin{equation*}
\begin{split}
 \frac{1}{2}\frac{d}{dt}\norm{\Gamma(t)}_{L^{2}}^{2}+\norm{\Gamma(t)}_{\dot
 H^{\frac{\alpha}{2}}}^{2} &
 =\int_{\mathrm{R}^{2}}\mathrm{div}([R_{\alpha},u]\theta)(t,x)\Gamma(t,x)\mathrm{d}x+\int_{\mathrm{R}^{2}}|D|^{\beta-\alpha}\partial_{1}\theta(t,x)
 \Gamma(t,x)\mathrm{d}x \\
 & \leq \norm{[R_{\alpha},u]\theta(t)}_{\dot
 H^{1-\frac{\alpha}{2}}}\norm{\Gamma(t)}_{\dot
 H^{\frac{\alpha}{2}}}+\norm{\theta(t)}_{\dot H^{1+\beta-\alpha-s_{1}}}\norm{\Gamma(t)}_{\dot
 H^{s_{1}}}.
\end{split}
\end{equation*}
 Interpolation inequality and
Young inequality yield
\begin{equation*}
\begin{split}
 \norm{\theta(t)}_{\dot H^{1+\beta-\alpha-s_{1}}}\norm{\Gamma(t)}_{\dot
 H^{s_{1}}} & \lesssim \norm{\theta(t)}_{\dot H^{1+\beta-\alpha-s_{1}}}\norm{\Gamma(t)}_{\dot
 H^{\frac{\alpha}{2}}}^{\frac{2s_{1}}{\alpha}} \norm{\Gamma(t)}_{
 L^{2}}^{1-\frac{2s_{1}}{\alpha}} \\
 & \leq C  \norm{\theta(t)}_{\dot H^{1+\beta-\alpha-s_{1}}}^{2}
 +C  \norm{\Gamma(t)}_{ L^{2}}^{2}+
 \frac{1}{4}\norm{\Gamma(t)}_{\dot H^{\frac{\alpha}{2}}}^{2}.
\end{split}
\end{equation*}
Inserting this inequality into the previous one and using Young inequality again we have
\begin{equation}\label{eq estimate1}
 \frac{d}{dt}\norm{\Gamma(t)}_{L^{2}}^{2}+\norm{\Gamma(t)}_{\dot
 H^{\frac{\alpha}{2}}}^{2}\leq 2\norm{[R_{\alpha},u]\theta(t)}_{
 \dot H^{1-\frac{\alpha}{2}}}^{2}+ 2 C \norm{\theta(t)}_{\dot
 H^{1+\beta-\alpha-s_{1}}}^{2} + 2C  \norm{\Gamma(t)}_{ L^{2}}^{2}.
\end{equation}
Applying Proposition \ref{prop cmt1} and Proposition \ref{prop
apes1} we have for every $\alpha\in ]\frac{2}{3},1[$
\begin{equation*}
\begin{split}
 \norm{[R_{\alpha},u]\theta(t)}_{
 \dot H^{1-\frac{\alpha}{2}}} & \leq \norm{[R_{\alpha},u]\theta(t)}_{
  H^{1-\frac{\alpha}{2}}} \\
  & \lesssim
 \norm{\Gamma(t)}_{L^{2}}\norm{\theta(t)}_{B^{1-\frac{3\alpha}{2}}_{\infty,2}}+
 \norm{\theta(t)}_{H^{2-\frac{5\alpha}{2}}}\norm{\theta(t)}_{L^{\infty}} +
 \norm{u(t)}_{L^{2}}\norm{\theta(t)}_{L^{2}} \\
 & \lesssim \norm{\Gamma(t)}_{L^{2}}\norm{\theta(t)}_{L^{\infty}}+
 \norm{\theta(t)}_{H^{2-\frac{5\alpha}{2}}}\norm{\theta^{0}}_{L^{\infty}} +
 (1+t) \\
 & \lesssim \norm{\Gamma(t)}_{L^{2}}+
 \norm{\theta(t)}_{H^{2-\frac{5\alpha}{2}}}+
 (1+t).
\end{split}
\end{equation*}
Putting the upper estimate in
\eqref{eq estimate1} leads to
\begin{equation*}
 \frac{d}{dt}\norm{\Gamma(t)}_{L^{2}}^{2}+\norm{\Gamma(t)}_{\dot
 H^{\frac{\alpha}{2}}}^{2} \lesssim \norm{\Gamma(t)}_{L^{2}}^{2}+
   \norm{\theta(t)}_{H^{2-\frac{5\alpha}{2}}}^{2}+  \norm{\theta(t)}_{\dot
 H^{1+\beta-\alpha-s_{1}}}^{2}+
  (1+t^{2}).
\end{equation*}
Using Gronwall inequality we obtain
\begin{equation*}
 \norm{\Gamma(t)}_{L^{2}}^{2}+ \int_{0}^{t}\norm{\Gamma(\tau)}_{\dot
 H^{\frac{\alpha}{2}}}^{2}\mathrm{d}\tau \leq  C_{1} e^{C_{1}t}(1+t^{2}+\norm{\theta}_{L^{2}_{t}H^{2-\frac{5\alpha}{2}}}^{2}+
 \norm{\theta}_{L^{2}_{t}\dot
 H^{1+\beta-\alpha-s_{1}}}^{2}).
\end{equation*}
If $\frac{3}{4}<\alpha\leq \frac{4}{5}$, we choose
$s_{1}=\frac{\alpha}{2}$, and for $1-\alpha<\beta\leq 3\alpha-2$,
then clearly
$$
0\leq 2-\frac{5\alpha}{2}\leq \frac{\beta}{2}, \quad 0\leq1+\beta-\frac{3\alpha}{2}\leq \frac{\beta}{2} , $$from
Proposition \ref{prop apes1} and interpolation inequality we easily
get
\begin{equation*}
 \norm{\theta}_{L^{2}_{t}H^{2-\frac{5\alpha}{2}}}^{2}+
 \norm{\theta}_{L^{2}_{t}\dot
 H^{1+\beta-\frac{3\alpha}{2}}}^{2}\lesssim 1+ t.
\end{equation*}
If $\frac{4}{5}<\alpha< 1$, we choose $s_{1}=2-2\alpha\in
]0,\frac{\alpha}{2}[$, and for $1-\alpha<\beta\leq 2-2\alpha$, then
$$0\leq\beta-1+\alpha\leq \frac{\beta}{2} ,$$
we also get
\begin{equation*}
 \norm{\theta}_{L^{2}_{t}H^{2-\frac{5\alpha}{2}}}^{2}+
 \norm{\theta}_{L^{2}_{t}\dot
 H^{\beta-1+\alpha}}^{2}\lesssim 1+ t.
\end{equation*}
Hence for every
$(\alpha,\beta)\in\Pi_{2}:=\big ]\frac{3}{4},1 \big[\,\times\,\big]1-\alpha,\min\{3\alpha-2,2-2\alpha\}\big]$
we have
\begin{equation}\label{eq key-est1}
 \norm{\Gamma(t)}_{L^{2}}^{2}+ \int_{0}^{t}\norm{\Gamma(\tau)}^{2}_{\dot
 H^{\frac{\alpha}{2}}}\mathrm{d}\tau \leq \Phi_{1}(t).
\end{equation}

$\bullet$\textit{Step 2: Estimation of $\norm{\Gamma}_{L^{\infty}_{t}L^{\widetilde{r}}}$ for every $\widetilde{r}\in [2,r]$ and for some $r\in [2,4[$}

Multiplying \eqref{eq eqInter} by $|\Gamma|^{r-2}\Gamma$ and
integrating in the spatial variable we obtain for every $s_{2},s_{3} \in
]0,\frac{\alpha}{2}]$ ($s_{3}\leq s_{2}$ and both will be chosen later)
\begin{equation*}
\begin{split}
 & \frac{1}{r}\frac{d}{dt}\norm{\Gamma(t)}_{L^{r}}^{r} +
 \int_{\mathbb{R}^{2}}|D|^{\alpha}\Gamma |\Gamma|^{r-2}\Gamma(t)
 \mathrm{d} x \\\leq &
 \int_{\mathbb{R}^{2}}\mathrm{div}[R_{\alpha},u]\theta
 |\Gamma|^{r-2}\Gamma(t) \mathrm{d} x +
 \int_{\mathbb{R}^{2}}|D|^{\beta-\alpha}\partial_{1}\theta |\Gamma|^{r-2}\Gamma(t)\mathrm{d}
 x \\
 \leq &  \norm{[R_{\alpha},u]\theta(t)}_{\dot H^{1-s_{2}}}\norm{|\Gamma|^{r-2}\Gamma(t)
 }_{\dot H^{s_{2}}} + \norm{\theta(t)}_{\dot H^{1+\beta-\alpha-s_{3}}}\norm{|\Gamma|^{r-2}\Gamma(t)
 }_{\dot H^{s_{3}}}.
\end{split}
\end{equation*}
Lemma 3.3 in \cite{Ju} and continuous embedding $\dot
H^{\frac{\alpha}{2}}\hookrightarrow L^{\frac{4}{2-\alpha}}$ lead to
\begin{equation*}
 \int_{\mathbb{R}^{2}}|D|^{\alpha}\Gamma |\Gamma|^{r-2}\Gamma
 \mathrm{d} x \gtrsim
 \norm{|\Gamma|^{\frac{r}{2}}}_{\dot H^{\frac{\alpha}{2}}}^{2}\gtrsim
 \norm{|\Gamma|^{\frac{r}{2}}}_{L^{\frac{4}{2-\alpha}}}^{2}=
 \norm{\Gamma}_{L^{\frac{2r}{2-\alpha}}}^{r}.
\end{equation*}
By using Lemma \ref{lem interpolation} in the Appendix we also find
\begin{equation*}
 \norm{|\Gamma|^{r-2}\Gamma
 }_{\dot H^{s_{i}}} \lesssim \norm{\Gamma}_{L^{\frac{2r}{2-\alpha}}}^{r-2} \norm{\Gamma}_{\dot
  H^{s_{i}+(1-\frac{2}{r})(2-\alpha)}}, \quad i=2,3.
\end{equation*}
Collecting the upper estimates we have
\begin{equation*}
\begin{split}
 & \frac{d}{dt}\norm{\Gamma(t)}_{L^{r}}^{r} +
 c\norm{\Gamma(t)}_{L^{\frac{2r}{2-\alpha}}}^{r}  \\ \lesssim &  \Big(\norm{[R_{\alpha},u]\theta(t)}_{\dot H^{1-s_{2}}}\norm{\Gamma}_{\dot
  H^{s_{2}+(1-\frac{2}{r})(2-\alpha)}}
  + \norm{\theta(t)}_{\dot H^{1+\beta-\alpha-s_{3}}}\norm{\Gamma}_{\dot
  H^{s_{3}+(1-\frac{2}{r})(2-\alpha)}}\Big )\norm{\Gamma}_{L^{\frac{2r}{2-\alpha}}}^{r-2}
\end{split}
\end{equation*}
Then we choose $s_{2}$ such that
$s_{2}+(1-\frac{2}{r})(2-\alpha)=\frac{\alpha}{2}$, which calls for
$s_{2}=\frac{\alpha}{2}-(1-\frac{2}{r})(2-\alpha)\in ]0,\frac{\alpha}{2}]$,
this is plausible if $\alpha\in ]\frac{4r-8}{3r-4},1 [$ for
$ r \in [2,4[$. Since $s_{3}\leq s_{2}$, by
interpolation we have
\begin{equation*}
 \norm{\Gamma(t)}_{\dot H^{s_{3}+(1-\frac{2}{r})(2-\alpha)}}\lesssim \norm{\Gamma(t)}_{\dot
 H^{\frac{\alpha}{2}}}^{\delta}\norm{\Gamma(t)}_{L^{2}}^{1-\delta}
 \leq \Phi_{1}(t) \norm{\Gamma(t)}_{\dot H^{\frac{\alpha}{2}}}^{\delta}.
\end{equation*}
where $\delta:=\frac{2}{\alpha}(s_{3}+(1-\frac{2}{r})(2-\alpha))$.
Also noticing that if $\alpha\in]\frac{6r-8}{5r-4},1[$, we have $1-s_{2}\in ]0, \alpha[$, then from the point (1)
of Proposition \ref{prop cmt1} and estimate \eqref{eq key-est1} we
further get
\begin{equation*}
\begin{split}
  \norm{[R_{\alpha},u]\theta(t)}_{\dot H^{1-s_{2}}} &  \leq \norm{[R_{\alpha},u]\theta(t)}_{H^{1-s_{2}}}\\
   & \lesssim \norm{\Gamma(t)}_{L^{2}}
  \norm{\theta(t)}_{B^{1-s_{2}-\alpha}_{\infty,2}}+\norm{\theta(t)}_{L^{\infty}}\norm{\theta(t)}_{H^{2-2\alpha-s_{2}}}+\norm{u(t)}_{L^{2}}\norm{\theta(t)}_{L^{2}} \\
  & \lesssim \norm{\Gamma(t)}_{L^{2}}
  \norm{\theta(t)}_{L^{\infty}}+\norm{\theta^{0}}_{L^{\infty}}\norm{\theta(t)}_{H^{2-2\alpha-s_{2}}}+(1+t)
  \\ & \lesssim \Phi_{1}(t) + \norm{\theta(t)}_{H^{2-2\alpha-s_{2}}}.
\end{split}
\end{equation*}
Therefore,
\begin{equation*}
\begin{split}
 & \frac{d}{dt}\norm{\Gamma(t)}_{L^{r}}^{r} +
 c\norm{\Gamma(t)}_{L^{\frac{2r}{2-\alpha}}}^{r}  \\ \lesssim &  \big (\Phi_{1}(t) + \norm{\theta(t)}_{H^{2-2\alpha-s_{2}}}
 \big )\norm{\Gamma(t)}_{L^{\frac{2r}{2-\alpha}}}^{r-2}
  \norm{\Gamma(t)}_{\dot
  H^{\frac{\alpha}{2}}}+ \Phi_{1}(t)\norm{\theta(t)}_{\dot
  H^{1+\beta-\alpha-s_{3}}} \norm{\Gamma(t)}_{L^{\frac{2r}{2-\alpha}}}^{r-2}
  \norm{\Gamma(t)}_{\dot H^{\frac{\alpha}{2}}}^{\delta}.
\end{split}
\end{equation*}
According to the following Young inequality
$$|A_{1}A_{2}A_{3}|\leq C'|A_{1}|^{\frac{2r}{4-r\widetilde{\delta}}}+C'' |A_{2}|^{\frac{2}{\widetilde{\delta}}}+ \frac{c}{4} |A_{3}|^{\frac{r}{r-2}}, \quad \widetilde{\delta}\in ]0,1], $$
we obtain
\begin{equation*}
\begin{split}
 & \frac{d}{dt}\norm{\Gamma(t)}_{L^{r}}^{r} +
 \norm{\Gamma(t)}_{L^{\frac{2r}{2-\alpha}}}^{r}  \\ \lesssim &  \Phi_{1}(t) +
 \norm{\theta(t)}_{H^{2-2\alpha-s_{2}}}^{\frac{2r}{4-r}}+
  \norm{\Gamma(t)}_{\dot H^{\frac{\alpha}{2}}}^{2}+
  \begin{cases} \Phi_{1}(t)\norm{\theta(t)}_{\dot
  H^{1+\beta-\alpha-s_{3}}}^{\frac{2r}{4-\delta r}}, \quad if \frac{2r}{4-\delta r}\geq 2 \\
  \norm{\theta(t)}_{\dot
  H^{1+\beta-\alpha-s_{3}}}^{2}, \quad otherwise. \end{cases}
\end{split}
\end{equation*}
Integrating in time yields
\begin{equation*}
\begin{split}
 & \norm{\Gamma(t)}_{L^{r}}^{r} +
 \int_{0}^{t}\norm{\Gamma(\tau)}_{L^{\frac{2r}{2-\alpha}}}^{r}\mathrm{d} \tau \\  \lesssim &  \Phi_{1}(t) +
 \norm{\theta}_{L^{\frac{2r}{4-r}}_{t}H^{2-2\alpha-s_{2}}}^{\frac{2r}{4-r}}+
 \begin{cases} \Phi_{1}(t)\norm{\theta}_{L^{\frac{2r}{4-\delta r}}_{t}\dot
  H^{1+\beta-\alpha-s_{3}}}^{\frac{2r}{4-\delta r}}, \quad if \frac{2r}{4-\delta r}\geq 2 \\
  \norm{\theta}_{L^{2}_{t}\dot
  H^{1+\beta-\alpha-s_{3}}}^{2}, \quad otherwise. \end{cases}
\end{split}
\end{equation*}
Note that we have used \eqref{eq key-est1} in the above deduction, thus it means $ (\alpha,\beta)\in \Pi_{2}$ at least.

Let $r\in [2,4[$. If $\alpha\in
]\frac{9r-12}{8r-8},\frac{8r-8}{7r-4}]$, we choose
$s_{3}:=s_{2}=\frac{3r-4}{2r}\alpha+\frac{4}{r}-2$, and for $\beta\in
]1-\alpha,\frac{5r-4}{3r-4}\alpha-2]$, we have $$0\leq 2-2\alpha
-s_{2}\leq \frac{4-r}{2r}\beta, \quad 0\leq 1+\beta-\alpha
-s_{2}\leq \frac{4-r}{2r}\beta ,$$ from Proposition \ref{prop apes1}
and interpolation inequality we find
\begin{equation*}
  \norm{\theta}_{L^{\frac{2r}{4- r}}_{t}\dot
  H^{1+\beta-\alpha-s_{2}}}+ \norm{\theta}_{L^{\frac{2r}{4-r}}_{t}H^{2-2\alpha-s_{2}}} \lesssim 1+t.
\end{equation*}
If $\alpha\in ]\frac{8r-8}{7r-4},1[$, we choose $s_{3}:=2-2\alpha<s_{2}$, then
$\delta = \frac{2}{\alpha}(2-2\alpha+\frac{r-2}{r}(2-\alpha))$ and for  $\beta\in
]1-\alpha,\min\{\frac{1-\alpha}{\frac{4}{\alpha}(1-\frac{1}{r})-2},2-2\alpha\}]$ we also get
\begin{equation*}
 0\leq \beta-1+\alpha \leq \frac{4-\delta r}{2r}\beta, \quad  0\leq \beta-1+\alpha \leq \frac{\beta}{2},
\end{equation*}
thus
\begin{equation*}
  \norm{\theta}_{L^{\frac{2r}{4- \delta r}}_{t}\dot
  H^{1+\beta-\alpha-s_{3}}}+\norm{\theta}_{L^{2}_{t}\dot
  H^{1+\beta-\alpha-s_{3}}} \lesssim 1+t.
\end{equation*}
Note that as $r\in[2,4[$ increases, the scope of $(\alpha,\beta)$ will monotonously shrink (e.g. see Figure 1).
Hence for some $r\in [2,4[$, $(\alpha,\beta)\in \Pi_{r}:= \big]\frac{9r-12}{8r-8},1\big[\, \times\, \big]1-\alpha,
\min\{\frac{5r-4}{3r-4}\alpha-2,\frac{1-\alpha}{\frac{4}{\alpha}(1-\frac{1}{r})-2},2-2\alpha \} \big]$, and for every $\widetilde{r}\in [2,r]$ we have
\begin{equation}\label{eq key-est2}
 \norm{\Gamma(t)}_{L^{\widetilde{r}}}^{\widetilde{r}} + \int_{0}^{t}\norm{\Gamma(\tau)}_{L^{\frac{2\widetilde{r}}{2-\alpha}}}^{\widetilde{r}}\mathrm{d}\tau \leq \Phi_{1}(t).
\end{equation}

\begin{figure}[htbp]
\begin{center}
\includegraphics[scale=0.8]{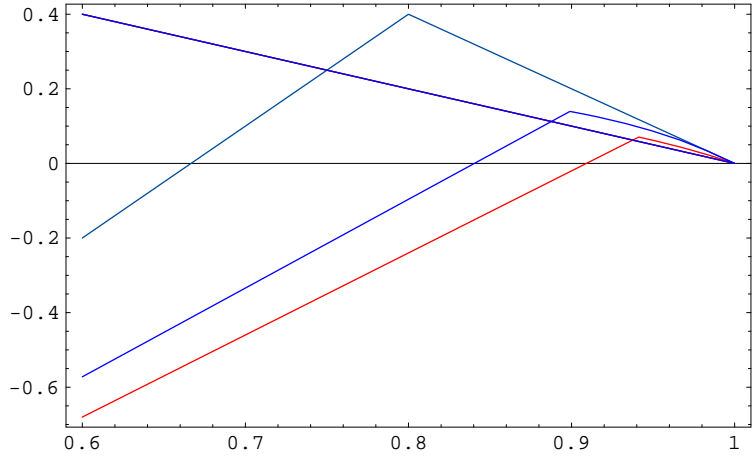}
\end{center}
\caption{$\Pi_{r}$ when $r=2,\frac{8+2\sqrt{6}}{5},3.$}
\vspace{0.1in}
\end{figure}

$\bullet$\textit{Step 3: Estimation of $\norm{\omega}_{L^{1}_{t}L^{\widetilde{r}}}$ for every $\widetilde{r}\in [2,r]$ and for some $r\in [2,4[$}

Since $\beta>1-\alpha$, there exists a fixed constant $\rho>1$ such that $\frac{\beta}{\rho}>1-\alpha$. From the explicit formula of $\Gamma$ we have for every  $\widetilde{r}\in [2,r]$
\begin{equation*}
\begin{split}
 \norm{\omega}_{L^{1}_{t}L^{\widetilde{r}}} & \leq \norm{\Gamma}_{L^{1}_{t}L^{\widetilde{r}}}+ \norm{R_{\alpha}\theta}_{L^{1}_{t}B^{0}_{\widetilde{r},1}} \\
 & \leq \Phi_{1}(t)+ t^{1-\frac{1}{\rho}}\norm{R_{\alpha}\theta}_{\widetilde{L}^{\rho}_{t}B^{0}_{\widetilde{r},1}} .
\end{split}
\end{equation*}
By a high-low frequency decomposition and a continuous embedding $B^{\frac{\beta}{\rho}}_{\widetilde{r},\infty}\hookrightarrow B^{1-\alpha}_{\widetilde{r},1}$ we find
\begin{equation*}
\begin{split}
 \norm{R_{\alpha}\theta}_{\widetilde{L}^{\rho}_{t}B^{0}_{\widetilde{r},1}} & \leq \norm{\Delta_{-1}R_{\alpha}\theta}_{\widetilde{L}^{\rho}_{t}B^{0}_{\widetilde{r},1}}
 + \norm{(Id-\Delta_{-1})\theta}_{\widetilde{L}^{\rho}_{t}B^{1-\alpha}_{\widetilde{r},1}} \\
 & \lesssim \norm{\Delta_{-1}\theta}_{L^{\rho}_{t}L^{\widetilde{r}}}+ \norm{(Id-\Delta_{-1})\theta}_{\widetilde{L}^{\rho}_{t}B^{\frac{\beta}{\rho}}_{\widetilde{r},\infty}} \\
 & \lesssim t^{\frac{1}{\rho}}\norm{\theta^{0}}_{L^{\widetilde{r}}} + \mathrm{sup}_{q\in\mathbb{N}}2^{q\frac{\beta}{\rho}} \norm{\Delta_{q}\theta}_{L^{\rho}_{t}L^{\widetilde{r}}}.
\end{split}
\end{equation*}
Inserting this estimate into the previous one and applying  Proposition \ref{prop TDsf} we obtain
\begin{equation*}
 \norm{\omega}_{L^{1}_{t}L^{\widetilde{r}}}\leq \Phi_{1}(t) + C t^{1-\frac{1}{\rho}} \norm{\omega}_{L^{1}_{t}L^{\widetilde{r}}},
\end{equation*}
where $C$ is an absolute constant depending only on $\widetilde{r},\rho$ and $\norm{\theta^{0}}_{L^{\infty}}$. If $C t^{1-\frac{1}{\rho}}=\frac{1}{2}$, equivalently,
$t=(\frac{1}{2C})^{\rho/(\rho-1)}:=T_{0}$, then for every $t\leq T_{0}$
$$ \norm{\omega}_{L^{1}_{t}L^{\widetilde{r}}}\leq \Phi_{1}(t). $$
Furthermore, if we evolve the system \eqref{eq 1.1} from the initial data $(u(T_{0}),\theta(T_{0}))$, then using the time translation invariance and the fact that
$\norm{\theta(T_{0})}_{L^{\widetilde{r}}}\leq \norm{\theta^{0}}_{L^{\widetilde{r}}}$, we have for every $t\leq T_{0}$
$$ \norm{\omega}_{L^{1}_{[T_{0},T_{0}+t]}L^{\widetilde{r}}}\leq \Phi_{1}(T_{0}+t). $$
Iterating like this, we finally get for every $t\in\mathbb{R}^{+}$
\begin{equation}\label{eq omegaL1Lr}
\norm{\omega}_{L^{1}_{t}L^{\widetilde{r}}}\leq \Phi_{1}(t).
\end{equation}

$\bullet$\textit{Step 4: Estimation of $\norm{\Gamma}_{L^{\sigma}_{t}B^{\frac{2}{r}}_{r,1}}$ for $\sigma \in[1,\frac{\alpha}{1-\alpha+2/r}[$ and $r=r_{0}:=\frac{8+2\sqrt{6}}{5}$}

Set $\Gamma_{q}:=\Delta_{q}\Gamma$ for every $q\in \mathbb{N}$. Considering the acting of frequency localization operator $\Delta_{q}$
on the equation \eqref{eq eqInter} we get
\begin{equation*}
\begin{split}
 \partial_{t}\Gamma_{q} + u\cdot\nabla\Gamma_{q}+|D|^{\alpha}\Gamma_{q} &=
 -[\Delta_{q},u\cdot\nabla]\Gamma+\Delta_{q}([R_{\alpha},u\cdot\nabla]\theta)+\Delta_{q}|D|^{\beta}R_{\alpha}\theta\\
 & := f_{q}.
\end{split}
\end{equation*}
Since $\Gamma_{q}$ is real-valued, then after multiplying the upper equation by $|\Gamma_{q}|^{r-2}\Gamma_{q}$ and integrating in the spatial variable we obtain
\begin{equation*}
 \frac{1}{r}\frac{d}{dt}\norm{\Gamma_{q}(t)}_{L^{r}}^{r} + \int_{\mathbb{R}^{2}}|D|^{\alpha}\Gamma_{q}|\Gamma_{q}|^{r-2}\Gamma \mathrm{d} x \leq \norm{\Gamma_{q}(t)}_{L^{r}}^{r-1}
 \norm{f_{q}(t)}_{L^{r}}.
\end{equation*}
Taking advantaging of the following generalized Bernstein inequality (see \cite{ref ChenMZ})
\begin{equation*}
 \int_{\mathbb{R}^{2}}|D|^{\alpha}\Gamma_{q}|\Gamma_{q}|^{r-2}\Gamma \mathrm{d} x \geq c 2^{q \alpha} \norm{\Gamma_{q}}_{L^{r}}^{r},
\end{equation*}
with some positive constant $c$ independent of $q$, we have
\begin{equation*}
 \frac{1}{r}\frac{d}{dt}\norm{\Gamma_{q}(t)}_{L^{r}}^{r} +  c 2^{q \alpha} \norm{\Gamma_{q}(t)}_{L^{r}}^{r} \leq \norm{\Gamma_{q}(t)}_{L^{r}}^{r-1}\norm{f_{q}(t)}_{L^{r}}.
\end{equation*}
Thus
\begin{equation*}
 \norm{\Gamma_{q}(t)}_{L^{r}}\leq e^{-ct2^{q \alpha}} \norm{\Gamma_{q}^{0}}_{L^{r}}+\int_{0}^{t} e^{-c(t-\tau)2^{q \alpha}}\norm{f_{q}(\tau)}_{L^{r}}\mathrm{d}\tau.
\end{equation*}
By taking the $L^{\sigma}([0,t])$ norm and by using the Young inequality we find for every $q\in \mathbb{N}$
\begin{equation}\label{eq main-4-key}
\begin{split}
 2^{q\frac{2}{r} }\norm{\Gamma_{q}}_{L^{\sigma}_{t}L^{r} }  \lesssim &  2^{q(\frac{2}{r}- \frac{\alpha}{\sigma} )}\norm{\Gamma_{q}^{0}}_{L^{r}}
 + 2^{q(\frac{2}{r}+1-\alpha- \frac{\alpha}{\sigma})} \int_{0}^{t}2^{q(\alpha-1)}\norm{[\Delta_{q},u\cdot\nabla]\Gamma(\tau)}_{L^{r}}\mathrm{d}\tau \\
 & + 2^{q(\frac{2}{r}+1-\alpha- \frac{\alpha}{\sigma})} \int_{0}^{t}2^{q(\alpha-1)}\norm{[R_{\alpha},u\cdot\nabla]\theta}_{L^{r}}\mathrm{d}\tau
 +  2^{q(\frac{2}{r}+1+\beta-\alpha- \frac{\alpha}{\sigma})}\norm{\Delta_{q}\theta}_{L^{1}_{t}L^{r}}.
\end{split}
\end{equation}
For the fourth term of the RHS, by using Proposition \ref{prop TDsf}, Proposition \ref{prop apes1} and estimate \eqref{eq omegaL1Lr} we get for each $q\in \mathbb{N}$
\begin{equation}\label{eq main-rhs4}
 \norm{\Delta_{q}\theta}_{L^{1}_{t}L^{r}} \lesssim 2^{-q\beta}(\norm{\theta^{0}}_{L^{r}}
  + \norm{\theta^{0}}_{L^{\infty}}\norm{\omega}_{L^{1}_{t}L^{r}}) \leq 2^{-q\beta}\Phi_{1}(t).
\end{equation}
For the third term of the RHS, we apply estimate \eqref{eq cmtEst2} with $s=\alpha-1$, Proposition \ref{prop apes1} and estimate \eqref{eq omegaL1Lr} to obtain
\begin{equation}\label{eq main-rhs3}
\begin{split}
 \int_{0}^{t}2^{q(\alpha-1)}\norm{[R_{\alpha},u\cdot\nabla]\theta(\tau)}_{L^{r}}\mathrm{d}\tau & \lesssim
  \int_{0}^{t}\norm{\nabla u(\tau)}_{L^{r}}\big(\norm{\theta(\tau)}_{L^{\infty}}+ \norm{ \theta (\tau)}_{L^{r}} \big)\mathrm{d}\tau \\
  & \lesssim \norm{\omega}_{L^{1}_{t}L^{r}}\norm{\theta^{0}}_{L^{\infty}\cap L^{r}} \\
  & \leq \Phi_{1}(t).
\end{split}
\end{equation}
For the second term of the RHS, in view of part (1) of Lemma \ref{lem commutator-EST} and the specific relationship between $u$ and $\theta$ we infer for every $q\in\mathbb{N}$
\begin{equation}\label{eq main-rhs2}
\begin{split}
 2^{q(\alpha-1)}\norm{[\Delta_{q},u\cdot\nabla]\Gamma(t)}_{L^{r}} & \lesssim (\norm{\nabla u(t)}_{B^{\alpha-1}_{r,\infty}} + \norm{u(t)}_{L^{2}})\norm{\Gamma(t)}_{B^{0}_{\infty,\infty}} \\
 & \lesssim (\norm{\Gamma(t)}_{L^{r}} + \norm{\theta(t)}_{L^{r}} + 1+t )\norm{\Gamma(t)}_{B^{\frac{2}{r}}_{r,1}} \\
 & \leq \Phi_{1}(t)\norm{\Gamma(t)}_{B^{\frac{2}{r}}_{r,1}}.
\end{split}
\end{equation}
To make the sum in the sequel summable, we need $\frac{2}{r}+1-\alpha-\frac{\alpha}{\sigma}<0$, that is,
$$1\leq \sigma <\frac{\alpha}{1-\alpha+2/r},\quad \max\Big \{\frac{2+r}{2r},\frac{9r-12}{8r-8} \Big\}<\alpha<1,\quad 2\leq r<4 .$$
Since for $r\in[2,4[$ the function $\frac{2+r}{2r}$ is monotonously decreasing and $\frac{9r-12}{8r-8}$ monotonously increasing, to obtain the largest scope of $\alpha$,
we have to choose $r=r_{0}:=\frac{8+2\sqrt{6}}{5}$ such that $\frac{2+r}{2r}=\frac{9r-12}{8r-8}=\frac{6-\sqrt{6}}{4}$. This leads to
\begin{equation*}
(\alpha,\beta)\in \Pi=\Big]\frac{6-\sqrt{6}}{4},1\Big[ \,\times\,\Big ]1-\alpha, \min\{ \frac{7+2\sqrt{6}}{5}\alpha-2,\frac{\alpha(1-\alpha)}{\sqrt{6}-2\alpha},2-2\alpha\}\Big].
\end{equation*}
Let $Q\in \mathbb{N}$ be a number chosen later. By gathering estimates \eqref{eq main-4-key}-\eqref{eq main-rhs2} together we find
\begin{equation*}
\begin{split}
 \norm{\Gamma}_{\widetilde{L}^{\sigma}_{t}B^{\frac{2}{r}}_{r,1}} & = \sum_{q< Q} 2^{q\frac{2}{r}}\norm{\Gamma}_{L^{\sigma}_{t}L^{r}}
 +  \sum_{q\geq Q} 2^{q\frac{2}{r}}\norm{\Gamma}_{L^{\sigma}_{t}L^{r}} \\
 & \lesssim 2^{Q\frac{2}{r}} \Phi_{1}(t) + 2^{-Q(\frac{\alpha}{\sigma}-\frac{2}{r})}\norm{\Gamma^{0}}_{L^{r}}
 +2^{-Q(\frac{\alpha}{\sigma}+\alpha-1-\frac{2}{r})}(\Phi_{1}(t)+\Phi_{1}(t)\norm{\Gamma}_{L^{1}_{t}B^{\frac{2}{r}}_{r,1}}) \\
 & \leq \Phi_{1}(t)2^{Q\frac{2}{r}}  + 2^{-Q(\frac{\alpha}{\sigma}+\alpha-1-\frac{2}{r})} \Phi_{1}(t) \norm{\Gamma}_{\widetilde{L}^{\sigma}_{t}B^{\frac{2}{r}}_{r,1}}.
\end{split}
\end{equation*}
We choose $Q$ such that
$$2^{-Q(\frac{\alpha}{\sigma}+\alpha-1-\frac{2}{r})} \Phi_{1}(t) \approx \frac{1}{2},$$
thus we obtain for every $t\in\mathbb{R}^{+}$
\begin{equation}\label{eq main-4-main}
 \norm{\Gamma}_{\widetilde{L}^{\sigma}_{t}B^{\frac{2}{r}}_{r,1}}\leq \Phi_{1}(t).
\end{equation}
By embedding this immediately leads to
\begin{equation}\label{eq main-4-main2}
 \norm{\Gamma}_{\widetilde{L}^{\sigma}_{t}B^{0}_{\infty,1}}\leq \Phi_{1}(t).
\end{equation}

$\bullet$\textit{Step 5: Estimation of $\norm{u}_{L^{1}_{t}B^{1}_{\infty,1}}$ }

By virtue of estimate \eqref{eq main-4-main2} with $\sigma=1$ and continuous embedding $B^{0}_{\infty,1}\cap L^{2}\hookrightarrow L^{\widetilde{p}} $ for all
$\widetilde{p}\in [2,p]$ we get
\begin{equation*}
\begin{split}
 \norm{\omega}_{L^{1}_{t}L^{\widetilde{p}}} & \leq \norm{\Gamma}_{L^{1}_{t}(B^{0}_{\infty,1}\cap L^{2})} + \norm{R_{\alpha} \theta}_{L^{1}_{t}B^{0}_{\widetilde{p},1} }\\
  & \leq \Phi_{1}(t) + t^{1-\frac{1}{\rho}} \norm{R_{\alpha}\theta }_{\widetilde{L}^{\rho}_{t}B^{0}_{\widetilde{p},1}}.
\end{split}
\end{equation*}
Thus in a similar way as obtaining \eqref{eq omegaL1Lr}, we have for every $\widetilde{p}\in]r_{0},p]$
\begin{equation*}
 \norm{\omega}_{L^{1}_{t}L^{\widetilde{p}}} \leq \Phi_{1}(t).
\end{equation*}
From Proposition \ref{prop TDsf}, we naturally deduce that
\begin{equation*}
   \sup_{q\in\mathbb{N}}2^{q\beta} \norm{\Delta_{q}\theta}_{L^{1}_{t}L^{\widetilde{p}}}\lesssim \norm{\theta^{0}}_{L^{\widetilde{p}}}
  + \norm{\theta^{0}}_{L^{\infty}}\norm{\omega}_{L^{1}_{t}L^{\widetilde{p}}} \leq \Phi_{1}(t).
\end{equation*}
Since $\beta>1-\alpha$ and $p>\frac{2}{\alpha+\beta-1}$, there exists a $p_{\alpha}\in]r_{0}, p]$ such that
$\beta>1-\alpha+\frac{2}{p_{\alpha}}$. Thus
\begin{equation*}
\begin{split}
 \norm{\omega}_{L^{1}_{t}B^{0}_{\infty,1}} & \leq \norm{\Gamma}_{L^{1}_{t}B^{0}_{\infty,1}}
 + \norm{R_{\alpha}\theta}_{L^{1}_{t}B^{0}_{\infty,1}} \\
 & \lesssim \Phi_{1}(t) + \norm{\Delta_{-1}R_{\alpha}\theta }_{L^{1}_{t}L^{\infty}}
 + \sum_{q\in \mathbb{N}} \norm{\Delta_{q}R_{\alpha}\theta}_{L^{1}_{t}L^{\infty}} \\
 & \lesssim \Phi_{1}(t) +  t \norm{\theta }_{L^{\infty}_{t}L^{2}}
 + \sum_{q\in \mathbb{N}} 2^{q(1-\alpha + \frac{2}{p_{\alpha}}-\beta)}
 \sup_{q\in \mathbb{N}}2^{q\beta} \norm{\Delta_{q}\theta}_{L^{1}_{t}L^{p_{\alpha}}} \\
 & \leq \Phi_{1}(t).
\end{split}
\end{equation*}
This immediately yields
\begin{equation}\label{eq-m-ap5-uL1B1oo1}
\begin{split}
 \norm{u}_{L^{1}_{t}B^{1}_{\infty,1}} & \lesssim \norm{\Delta_{-1}u}_{L^{1}_{t}L^{\infty}} + \sum_{q\in\mathbb{N}}\norm{\Delta_{q}\nabla u}_{L^{1}_{t}L^{\infty}}
 \lesssim \norm{u}_{L^{2}}+\norm{\omega}_{L^{1}_{t}B^{0}_{\infty,1}}\\
 & \leq \Phi_{1}(t).
\end{split}
\end{equation}

$\bullet$ \textit{Step 6: Estimation of $\norm{\theta}_{\widetilde{L}^{\infty}_{t}(H^{1-\alpha}\cap B^{1-\alpha}_{\infty,1})}$ and $\norm{\omega}_{L^{\infty}_{t}L^{p}}$}

By Proposition \ref{propRF} and estimate \eqref{eq-m-ap5-uL1B1oo1}, we directly obtain
\begin{equation}\label{eq-m-ap6-theta}
\begin{split}
 \norm{\theta}_{\widetilde{L}^{\infty}_{t}(H^{1-\alpha}\cap B^{1-\alpha}_{\infty,1})} & \lesssim e^{C \norm{\nabla u}_{L^{1}_{t}L^{\infty}}}
 \norm{\theta^{0}}_{H^{1-\alpha}\cap B^{1-\alpha}_{\infty,1}}\lesssim e^{C \norm{u}_{L^{1}_{t}B^{1}_{\infty,1}}} \\
 & \leq \Phi_{2}(t).
\end{split}
\end{equation}
For $p\in]\frac{2}{\alpha+\beta-1},\infty[$, in light of equation \eqref{eq eqInter} and Proposition \ref{prop MP} we find
\begin{equation*}
 \norm{\Gamma(t)}_{L^{p}}\leq \norm{\Gamma^{0}}_{L^{p}} + \int_{0}^{t}\norm{ [R_{\alpha}, u\cdot\nabla]\theta (\tau)}_{L^{p}}\mathrm{d}\tau
  + \int_{0}^{t}\norm{|D|^{\beta}R_{\alpha}\theta(\tau)}_{L^{p}}\mathrm{d} \tau.
\end{equation*}
For the first integral of the RHS, using estimate \eqref{eq cmtEst2} with $s=0$ yields
\begin{equation*}
\begin{split}
 \norm{[R_{\alpha}, u\cdot\nabla]\theta (\tau)}_{L^{p}} & \leq \norm{[R_{\alpha}, u\cdot\nabla]\theta (\tau)}_{B^{0}_{p,1}}\\
 & \lesssim\norm{\nabla u(\tau)}_{L^{p}}(\norm{\theta(\tau)}_{B^{1-\alpha}_{\infty,1}}
 + \norm{\theta(\tau)}_{L^{p}} ) \\
 & \leq \Phi_{2}(\tau) \norm{\omega(\tau)}_{L^{p}}.
\end{split}
\end{equation*}
For the second integral of the RHS, using Proposition \ref{propRF} again we infer
\begin{equation*}
\begin{split}
 \int_{0}^{t}\norm{|D|^{\beta}R_{\alpha}\theta(\tau)}_{L^{p}}\mathrm{d} \tau & \lesssim \norm{\theta}_{L^{1}_{t}L^{p}}+
 \norm{\theta}_{L^{1}_{t}\dot B^{1-\alpha+\beta}_{p,1}} \\
 & \lesssim \norm{\theta}_{L^{1}_{t}L^{p}}+  e^{C \norm{\nabla u}_{L^{1}_{t}L^{\infty}}} \norm{\theta^{0}}_{B^{1-\alpha}_{p,1}} \\
 & \leq \Phi_{2}(t).
\end{split}
\end{equation*}
Hence
\begin{equation*}
\begin{split}
 \norm{\omega(t)}_{ L^{p}} & \leq \norm{\Gamma(t)}_{ L^{p}} + \norm{R_{\alpha}\theta(t)}_{ L^{p}} \\
 & \leq \Phi_{2}(t) +\int_{0}^{t}\Phi_{2}(\tau)\norm{\omega(\tau)}_{L^{p}}\mathrm{d}\tau.
\end{split}
\end{equation*}
Gronwall inequality ensures
\begin{equation}\label{eq-m-ap6-omega}
 \norm{\omega(t)}_{ L^{p}}\leq \Phi_{3}(t).
\end{equation}
Taking estimates \eqref{eq-m-ap6-omega} and \eqref{eq-m-ap6-theta} into account, we return to \textit{Step 4} and further find for every $\sigma\in[1,\frac{\alpha}{1-\alpha+2/p}[$
\begin{equation}\label{eq-m-ap6-exten}
 \norm{ \Gamma}_{\widetilde{L}^{\sigma}_{t}B^{\frac{2}{p}}_{p,1}} + \norm{ u}_{\widetilde{L}^{\sigma}_{t}B^{1}_{\infty,1}} \leq \Phi_{3}(t).
\end{equation}
This finally ends the proof of Proposition \ref{prop apes2}.

\end{proof}

\subsection{Uniqueness}

We will prove a uniqueness result for the system \eqref{eq 1.1} with $(\alpha,\beta)\in \Pi$ in the following space
$$\mathcal{Y}_{T}:= L^{\infty}_{T}H^{1}\cap L^{1}_{T}B^{1}_{\infty,1}\times L^{\infty}_{T}L^{2}\cap L^{1}_{T}B^{1-\alpha}_{\infty,1}.$$
Let ($u_{i},\theta_{i}$)$\in\mathcal{Y}_{T}$ be two solutions of the system \ref{eq 1.1} with initial data ($u^{0}_{i},\theta^{0}_{i}$), $i=1,2$.
Set $\delta u:=u_{1}-u_{2}$, $\delta \theta:=\theta_{1}-\theta_{2}$ and $\delta p:=p_{1}-p_{2}$. Then
\begin{equation*}
\begin{split}
 \partial_{t}\delta u + u_{1}\cdot\nabla \delta u + |D|^{\alpha}\delta u + \nabla \delta p\; & =\, - \delta u\cdot\nabla u_{2} + \delta \theta e_{2} \\
 \partial_{t}\delta \theta + u_{1}\cdot\nabla \delta\theta + |D|^{\beta}\delta \theta\; & =\, - \delta u\cdot\nabla \theta_{2} \\
 (\delta u,\delta \theta)|_{t=0}\; &= \, (\delta u^{0},\delta \theta^{0}).
\end{split}
\end{equation*}
To estimate $\delta u$, by means of Lemma \ref{lem-appe-LVSE} (and its remark) in the appendix, we choose $\varrho=1$ for term $- \delta u\cdot\nabla u_{2}$ and
$\varrho=\infty$ for term $\delta \theta e_{2}$ to get for every $t\in[0,T]$
\begin{equation}\label{eq-m-uni-1}
\norm{\delta u(t)}_{B^{0}_{2,\infty}}\lesssim e^{C \norm{ u_{1}}_{L^{1}_{t}B^{1}_{\infty,1}} } \Big( \norm{\delta u^{0}}_{B^{0}_{2,\infty}}+
\int_{0}^{t}\norm{\delta u\cdot\nabla u_{2}(\tau)}_{B^{0}_{2,\infty}}\mathrm{d}\tau + (1+ t) \norm{\delta \theta}_{L^{\infty}_{t}B^{-\alpha}_{2,\infty}} \Big).
\end{equation}
For the integral term of the RHS, point (2) of Lemma \ref{lem commutator-EST} leads to
\begin{equation*}
 \norm{\delta u\cdot\nabla u_{2}}_{B^{0}_{2,\infty}}\lesssim \norm{u}_{L^{2}} \norm{u_{2}}_{B^{1}_{\infty,1}}.
\end{equation*}
Using the logarithmic interpolation inequality stated in Lemma 6.10 of \cite{ref HmidiKR-BNS} we have
\begin{equation*}
 \norm{\delta u}_{L^{2}} \lesssim \norm{\delta u}_{B^{0}_{2,\infty}} \log (e+\frac{1}{\norm{\delta u}_{B^{0}_{2,\infty}}}) \log(e+\norm{\delta u}_{H^{1}}).
\end{equation*}
Thus
\begin{equation}\label{eq-m-uni-2}
 \norm{\delta u\cdot\nabla u_{2}}_{B^{0}_{2,\infty}}\lesssim  \norm{u_{2}}_{B^{1}_{\infty,1}}\log(e+\norm{\delta u}_{H^{1}}) \mu(\norm{\delta u}_{B^{0}_{2,\infty}}),
\end{equation}
where $\mu(x):=x\log (e+\frac{1}{x})$. For the last term of the RHS of \eqref{eq-m-uni-1}, by virtue of Proposition \ref{propRF}, we have for every $t\in[0,T]$
\begin{equation}\label{eq-m-uni-3}
 \norm{\delta \theta}_{L^{\infty}_{t}B^{-\alpha}_{2,\infty}}\lesssim e^{C\norm{u_{1}}_{L^{1}_{t}B^{1}_{\infty,1}}}
 \Big(\norm{\delta \theta^{0}}_{B^{-\alpha}_{2,\infty}}+ \int_{0}^{t} \norm{\delta u\cdot\nabla \theta_{2}(\tau) }_{B^{-\alpha}_{2,\infty}}\mathrm{d}\tau \Big).
\end{equation}
Taking advantage of Lemma \ref{lem commutator-EST} and the logarithmic interpolation inequality again we obtain
\begin{equation}\label{eq-m-uni-4}
\begin{split}
 \norm{\delta u\cdot\nabla \theta_{2} }_{B^{-\alpha}_{2,\infty}} & \lesssim \norm{\delta u}_{L^{2}} \norm{\theta_{2}}_{B^{1-\alpha}_{\infty,1}} \\
 & \lesssim \norm{\theta_{2}}_{B^{1-\alpha}_{\infty,1}} \log(e+\norm{\delta u}_{H^{1}}) \mu(\norm{\delta u}_{B^{0}_{2,\infty}}).
\end{split}
\end{equation}
Denote $Y(t):=\norm{\delta u}_{L^{\infty}_{t}B^{0}_{2,\infty}}+ \norm{\delta \theta}_{L^{\infty}_{t}B^{-\alpha}_{2,\infty}} $. Gathering estimates \eqref{eq-m-uni-1}-\eqref{eq-m-uni-4} together yields
\begin{equation*}
 Y(t) \leq f(t)\Big(Y(0)+ \int_{0}^{t} \big( \norm{u_{2}(\tau)}_{B^{1}_{\infty,1}}+\norm{\theta_{2}(\tau)}_{B^{1-\alpha}_{\infty,1}} \big )\mu(Y(\tau))\mathrm{d}\tau \Big),
\end{equation*}
where $f(t)$ is an explicit function which continuously and increasingly depends on $\norm{(u_{i},\theta_{i})}_{\mathcal{Y}_{t}}$ and time $t$.
Since
\begin{equation*}
 \lim_{x\rightarrow 0+} \int_{x}^{1}\frac{1}{\mu(r)}\mathrm{d}r\geq\lim_{x\rightarrow 0+} \log(1+\log\frac{1}{x})= \infty,
\end{equation*}
then the classical Osgood lemma (see Theorem 5.2.1 in \cite{ref chemin}) ensures the uniqueness. Moreover, this lemma also shows some quantified estimates as follows
\begin{equation}\label{eq-m-uni-5}
 Y(0)\leq a(T)\Longrightarrow Y(T)\leq b(T)\big( Y(0) \big)^{\gamma(T)},
\end{equation}
where $a,b,\gamma$ are explicit functions depending continuously on $\norm{(u_{i},\theta_{i})}_{\mathcal{Y}_{T}}$ and time $T$.

\subsection{Existence}

First we smooth the data to get the following approximate system
\begin{equation}\label{eq gBn}
 \begin{cases}\partial_{t}u^{(n)}+u^{(n)}\cdot\nabla u^{(n)}+\nabla
 p^{(n)}+|D|^{\alpha}u^{(n)}=\theta^{(n)} e_{2}, \\
 \partial_{t}\theta^{(n)}+u^{(n)}\cdot\nabla \theta^{(n)}+|D|^{\beta}\theta^{(n)}=0, \\
 \textrm{div} u^{(n)}=0, \\
 u^{(n)}|_{t=0}=S_{n}u^{0},\quad \theta^{(n)}|_{t=0}=S_{n}\theta^{0}.
 \end{cases}
\end{equation}
Since $S_{n}u^{0},S_{n}\theta^{0}\in H^{s}$ for every $s\in\mathbb{R}$, from the classical theory of quasi-linear hyperbolic systems, we have the local well-posedness
of the approximate system. We also have a blowup criterion as follows: if the quantity
$\norm{\nabla u^{n}}_{L^{1}_{T}L^{\infty}}$ is finite, the time $T$ can be continued beyond. Then the \textit{a priori} estimate \eqref{eq kest} with $\sigma=1$ ensures that the solution $(u^{(n)},\theta^{(n)})$ is globally
defined. Moreover, we also have for $\sigma\in[1,\frac{\alpha}{1-\alpha+2/p}[$,
 \begin{equation*}
  \norm{u^{(n)}}_{L^{\sigma}_{T}B^{1}_{\infty,1}}+  \norm{u^{(n)}}_{L^{\infty}_{T}(H^{1}\cap
  \dot W^{1,p})} \leq
  \Phi_{3}(T),
 \end{equation*}
  \begin{equation*}
  \norm{\theta^{(n)}}_{L^{\infty}_{T}( H^{1-\alpha}\cap
  B^{1-\alpha}_{\infty,1})} \leq  \Phi_{2}(T).
 \end{equation*}
Thus there exists $(u,\theta)$ satisfying the above estimates such that $(u^{(n)},\theta^{(n)})$ weakly converges to $(u,\theta)$ up to the extraction of a subsequence.
Furthermore, from \eqref{eq-m-uni-5}, if
\begin{equation*}
 d_{n,m}:=\norm{(S_{n}-S_{m})u^{0}}_{B^{0}_{2,\infty}}+ \norm{(S_{n}-S_{m})\theta^{0}}_{B^{-\alpha}_{2,\infty}}\leq a(T)
\end{equation*}
then we have
\begin{equation*}
\norm{u^{(n)}-u^{(m)}}_{L^{\infty}_{T}B^{0}_{2,\infty}} + \norm{\theta^{(n)}-\theta^{(m)}}_{L^{\infty}_{T}B^{-\alpha}_{2,\infty}}\leq b(T)(d_{n,m})^{\gamma(T)}.
\end{equation*}
This means that $(u^{(n)})$ is of Cauchy and thus it converges strongly to $u$ in $L^{\infty}_{T}B^{0}_{2,\infty}$. By interpolation, we obtain the strong convergence of
$u^{(n)}$ to $u$ in $L^{2}([0,T]\times \mathbb{R}^{2})$. Thus $u^{(n)}\otimes u^{(n)}$ strongly converges in $L^{1}([0,T]\times \mathbb{R}^{2})$. But due to
that $\theta^{(n)}$ weakly converges to $\theta$ in $L^{2}([0,T]\times \mathbb{R}^{2})$, we have $u^{(n)}\theta^{(n)}$ converges weakly to $u\theta$. It then suffices to pass
to the limit in \eqref{eq gBn} and we finally get that $(u,\theta)$ is a solution of our original system \eqref{eq 1.1}.

\section{Appendix: Technical Lemmas}
\setcounter{section}{5}\setcounter{equation}{0}

\begin{lemma}\label{lem interpolation}
 Let $\gamma\in[2,\infty[$, $s\in ]0,1[$, $\alpha\in]\frac{\gamma-4}{\gamma-2},2[$. Then for
 every smooth function $f$ we have
 \begin{equation*}
  \norm{|f|^{\gamma-2}f}_{\dot H^{s}} \lesssim
  \norm{f}_{L^{\frac{2\gamma}{2-\alpha}}}^{\gamma-2} \norm{f}_{\dot
  H^{s+(\frac{n}{2}-\frac{n}{\gamma})(2-\alpha)}}.
 \end{equation*}
\end{lemma}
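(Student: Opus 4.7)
The plan is to derive the estimate via the Kenig--Ponce--Vega fractional chain rule combined with a Sobolev embedding. Writing $G(u) := |u|^{\gamma-2}u$ and $D := \sqrt{-\Delta}$, one has $G \in C^1(\mathbb{R})$ with $|G'(u)| \leq (\gamma-1)|u|^{\gamma-2}$. Since $s \in (0,1)$, the fractional chain rule should yield
\begin{equation*}
 \norm{|f|^{\gamma-2}f}_{\dot H^s} = \norm{D^s G(f)}_{L^2} \lesssim \norm{G'(f)}_{L^{p_1}} \norm{D^s f}_{L^{p_2}} \lesssim \norm{f}_{L^{p_1(\gamma-2)}}^{\gamma-2} \norm{D^s f}_{L^{p_2}},
\end{equation*}
whenever $\tfrac{1}{p_1} + \tfrac{1}{p_2} = \tfrac{1}{2}$ and the exponents lie in the admissible range.

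Next I would tune $p_1$ so that $p_1(\gamma-2) = \tfrac{2\gamma}{2-\alpha}$, which forces
\begin{equation*}
 \frac{1}{p_1} = \frac{(2-\alpha)(\gamma-2)}{2\gamma}, \qquad \frac{1}{p_2} = \frac{1}{2} - \frac{1}{p_1} = \frac{(\alpha-1)\gamma + 2(2-\alpha)}{2\gamma}.
\end{equation*}
The hypothesis $\alpha > \tfrac{\gamma-4}{\gamma-2}$ (vacuous when $\gamma = 2$) is precisely what makes $\tfrac{1}{p_2} > 0$, while $\alpha < 2$ and $\gamma \geq 2$ give $\tfrac{1}{p_2} \leq \tfrac{1}{2}$; hence $p_2 \in [2,\infty)$, as required by the chain rule. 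A Sobolev embedding then closes the loop:
\begin{equation*}
 \norm{D^s f}_{L^{p_2}} \lesssim \norm{f}_{\dot H^{s + n(1/2 - 1/p_2)}} = \norm{f}_{\dot H^{s + n/p_1}} = \norm{f}_{\dot H^{s + (n/2 - n/\gamma)(2-\alpha)}},
\end{equation*}
which matches the target exponent exactly, and one can read off that all the constraints on $\alpha$ in the statement are saturated by the positivity/embedding conditions on $p_2$.

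The main difficulty I anticipate lies in justifying the fractional chain rule when $\gamma \in (2,3)$, since $G'(u) = (\gamma-1)|u|^{\gamma-2}$ is only Hölder continuous at the origin, whereas the classical Kenig--Ponce--Vega statement asks for more smoothness on $G$. I would handle this either by regularization, applying the chain rule to $G_\varepsilon(u) = (|u|^2 + \varepsilon)^{(\gamma-2)/2}u$ and passing to the limit $\varepsilon \to 0$ by dominated convergence, or alternatively by working directly from the Gagliardo characterization
\begin{equation*}
 \norm{g}_{\dot H^s}^2 \sim \iint \frac{|g(x)-g(y)|^2}{|x-y|^{n+2s}}\,dx\,dy
\end{equation*}
together with the elementary pointwise bound $|G(u)-G(v)| \leq (\gamma-1)(|u|^{\gamma-2}+|v|^{\gamma-2})|u-v|$ and Hölder's inequality in the double integral with the exponents $(p_1,p_2)$ above. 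Either route delivers the stated bound.
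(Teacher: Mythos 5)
Your proposal is correct, and the exponent bookkeeping matches the statement exactly: your $p_2$ coincides with the exponent $\widetilde{\gamma}=\frac{2\gamma}{\gamma-(\gamma-2)(2-\alpha)}$ that the paper works with, and $n/p_1=(\frac{n}{2}-\frac{n}{\gamma})(2-\alpha)$. Your primary route --- the Christ--Weinstein/Kenig--Ponce--Vega fractional chain rule followed by the homogeneous Sobolev embedding $\dot H^{s+n(\frac12-\frac1{p_2})}\hookrightarrow \dot W^{s,p_2}$ --- is genuinely different from the paper's argument, which never invokes a chain rule: the paper reduces the claim, via the Besov embedding $\dot H^{s+(\frac{n}{2}-\frac{n}{\gamma})(2-\alpha)}=\dot B^{s+(\frac{n}{2}-\frac{n}{\gamma})(2-\alpha)}_{2,2}\hookrightarrow\dot B^{s}_{\widetilde{\gamma},2}$, to the stronger bound $\norm{|f|^{\gamma-2}f}_{\dot H^{s}}\lesssim\norm{f}_{L^{2\gamma/(2-\alpha)}}^{\gamma-2}\norm{f}_{\dot B^{s}_{\widetilde{\gamma},2}}$, and proves that from the difference-quotient characterizations of $\dot H^{s}$ and $\dot B^{s}_{\widetilde{\gamma},2}$ (valid since $s\in\,]0,1[$), the pointwise inequality $\bigl||a|^{\gamma-2}a-|b|^{\gamma-2}b\bigr|\lesssim|a-b|\,(|a|^{\gamma-2}+|b|^{\gamma-2})$ and H\"older --- i.e.\ precisely your second fallback. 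What each buys: the chain-rule route is shorter but imports a nontrivial external theorem (and your worry about $\gamma\in(2,3)$ is unfounded, since Christ--Weinstein only requires the mean-value-type bound $|G'(a+\tau b)|\lesssim|G'(a)|+|G'(b)|$, which $|u|^{\gamma-2}$ satisfies), while the paper's difference-quotient route is elementary, self-contained, and yields the slightly sharper Besov-norm intermediate estimate. One small correction to your fallback as written: after H\"older in the double integral you control the Besov quantity $\norm{f}_{\dot B^{s}_{p_2,2}}$, not $\norm{D^{s}f}_{L^{p_2}}$ (for $p_2>2$ the latter does not dominate the former), so the closing step there must be the embedding $\dot B^{s+n/p_1}_{2,2}\hookrightarrow\dot B^{s}_{p_2,2}$ rather than the $L^{p_2}$ Sobolev embedding; with that adjustment both of your routes deliver the lemma.
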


\begin{proof}[Proof of Lemma \ref{lem interpolation}]
 This result is a generalization of Lemma 6.9 in \cite{ref HmidiKR-BNS}, and here we sketch the proof. In fact, by Bernstein inequality,
it reduces to prove the following stronger result
\begin{equation*}
 \norm{|f|^{\gamma-2}f}_{\dot H^{s}} \lesssim \norm{f}_{L^{\frac{2\gamma}{2-\alpha}}}^{\gamma-2} \norm{f}_{\dot B^{s}_{\widetilde{\gamma},2}}.
\end{equation*}
where $\widetilde{\gamma}:=\frac{2\gamma}{\gamma-(\gamma-2)(2-\alpha)}$. For $s\in]0,1[$, we use the characterization of $\dot H^{s}$,
\begin{equation*}
 \norm{|f|^{\gamma-2}f}_{\dot H^{s}}^{2} \thickapprox \int_{\mathbb{R}^{n}}
 \frac{\norm{|f|^{\gamma-2}f(x+\cdot)-|f|^{\gamma-2}f(\cdot) }^{2}_{L^{2}}}{|x|^{n+2s}} \mathrm{d} x.
\end{equation*}
By using the simple inequality
\begin{equation*}
 \big||a|^{\gamma-2}a-|b|^{\gamma-2}b \big|\lesssim_{\gamma} |a-b| (|a|^{\gamma-2}+|b|^{\gamma-2}) , \quad \forall a,b\in\mathbb{R},
\end{equation*}
and H\"older inequality we have for every $\alpha\in] \frac{\gamma-4}{\gamma-2},2 [$
\begin{equation*}
\begin{split}
 \norm{|f|^{\gamma-2}f(x+\cdot) - |f|^{\gamma-2}f}_{L^{2}} & \lesssim \norm{f(x+\cdot)-f(\cdot)}_{L^{\widetilde{\gamma}}} \norm{|f|^{\gamma-2}}_{L^{\frac{2\gamma}{(\gamma-2)(2-\alpha)}}} \\
 & \lesssim \norm{f(x+\cdot)-f(\cdot)}_{L^{\widetilde{\gamma}}} \norm{f}_{L^{\frac{2\gamma}{2-\alpha}}}^{\gamma-2}.
\end{split}
\end{equation*}
From the characterization of homogeneous Besov space again we get the conclusion.

\end{proof}

Next we state some useful estimates in Besov framework.
\begin{lemma}\label{lem commutator-EST}
 Let $u$ be a smooth divergence-free vector field of $\mathbb{R}^{n}$ and $f$ be a smooth scalar function. Then
 \begin{enumerate}[(1)]
  \item
  for every $\alpha \in ]0,1[$ and $p\in [2,\infty]$
  \begin{equation*}
   \sup_{q\geq -1} 2^{q(\alpha-1)}\norm{[\Delta_{q},u\cdot\nabla]f}_{L^{p}} \lesssim_{\alpha}
   (\norm{\nabla u}_{B^{\alpha-1}_{p,\infty}}+ \norm{u}_{L^{2}}) \norm{f}_{B^{0}_{\infty,\infty}} .
  \end{equation*}
  \item
  for every $s\in [-1,0]$
  \begin{equation*}
   \norm{u\cdot\nabla f}_{B^{s}_{2,\infty}} \lesssim \norm{u}_{L^{2}} \norm{f}_{B^{s+1}_{\infty,1}}.
  \end{equation*}
 \end{enumerate}
\end{lemma}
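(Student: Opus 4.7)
\textbf{Proof plan for Lemma~\ref{lem commutator-EST}.} Both parts proceed from Bony's paraproduct decomposition. For (1) the commutator structure is exploited through the convolution-commutator estimate of Lemma~\ref{lem commutator}, while for (2) the key observation is $u\cdot\nabla f=\textrm{div}(uf)$ coming from $\textrm{div}\,u=0$, reducing the claim to a Besov product estimate.

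For part (1), write
\begin{equation*}
[\Delta_{q},u\cdot\nabla]f=\sum_{|q'-q|\le 4}[\Delta_{q},S_{q'-1}u^{i}]\partial_{i}\Delta_{q'}f+\sum_{|q'-q|\le 4}[\Delta_{q},\Delta_{q'}u^{i}]\partial_{i}S_{q'-1}f+\sum_{q'\ge q-4}[\Delta_{q},\Delta_{q'}u^{i}]\partial_{i}\widetilde{\Delta}_{q'}f.
\end{equation*}
Since $\Delta_{q}$ is convolution with $\phi_{q}(x)=2^{qn}\phi(2^{q}x)$ satisfying $\|x\phi_{q}\|_{L^{1}}\lesssim 2^{-q}$, Lemma~\ref{lem commutator} bounds each summand of the first sum by
\begin{equation*}
\|[\Delta_{q},S_{q'-1}u^{i}]\partial_{i}\Delta_{q'}f\|_{L^{p}}\lesssim 2^{-q}\|\nabla S_{q'-1}u\|_{L^{p}}\cdot 2^{q'}\|\Delta_{q'}f\|_{L^{\infty}},
\end{equation*}
and analogously for the second. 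The ingredient $\|\nabla S_{q'-1}u\|_{L^{p}}$ is then split: for dyadic blocks $q''\ge 0$ the inequality $\|\Delta_{q''}\nabla u\|_{L^{p}}\lesssim 2^{-q''(\alpha-1)}\|\nabla u\|_{B^{\alpha-1}_{p,\infty}}$ together with the geometric sum (convergent thanks to $1-\alpha>0$) yields $\lesssim 2^{q'(1-\alpha)}\|\nabla u\|_{B^{\alpha-1}_{p,\infty}}$, while $\|\Delta_{-1}\nabla u\|_{L^{p}}\lesssim\|u\|_{L^{2}}$ by Bernstein and the local embedding $L^{2}\hookrightarrow L^{p}$ on a fixed ball (using $p\ge 2$). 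Multiplying by $2^{q(\alpha-1)}$ and using $|q'-q|\le 4$ produces the desired $(\|\nabla u\|_{B^{\alpha-1}_{p,\infty}}+\|u\|_{L^{2}})\|f\|_{B^{0}_{\infty,\infty}}$ bound. The remainder sum is handled by the same commutator bound plus the convergent factor $\sum_{q'\ge q-4}2^{(q-q')\alpha}$, which needs $\alpha>0$.

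For part (2), the identity $u\cdot\nabla f=\partial_{i}(u^{i}f)$ gives $\|u\cdot\nabla f\|_{B^{s}_{2,\infty}}\lesssim\|uf\|_{B^{s+1}_{2,\infty}}$, so it suffices to prove $\|uf\|_{B^{s+1}_{2,\infty}}\lesssim\|u\|_{L^{2}}\|f\|_{B^{s+1}_{\infty,1}}$ for $s+1\in[0,1]$. Bony-decomposing $uf=T_{u}f+T_{f}u+R(u,f)$: the paraproduct $T_{u}f$ satisfies $\|\Delta_{j}T_{u}f\|_{L^{2}}\lesssim\|u\|_{L^{2}}\|\Delta_{j}f\|_{L^{\infty}}$, giving $\|u\|_{L^{2}}\|f\|_{B^{s+1}_{\infty,\infty}}$; the paraproduct $T_{f}u$ is controlled using $\|S_{j-1}f\|_{L^{\infty}}\lesssim\|f\|_{B^{s+1}_{\infty,1}}$ (valid because $B^{s+1}_{\infty,1}\hookrightarrow L^{\infty}$ for $s+1\ge 0$) combined with the almost-orthogonality of the summands $S_{q-1}f\,\Delta_{q}u$ to collect a factor $\|u\|_{L^{2}}$ out of $\bigl(\sum_{q}\|\Delta_{q}u\|_{L^{2}}^{2}\bigr)^{1/2}$; finally the remainder obeys $\|\Delta_{j}R(u,f)\|_{L^{2}}\lesssim\sum_{q\ge j-4}\|\Delta_{q}u\|_{L^{2}}\|\widetilde{\Delta}_{q}f\|_{L^{\infty}}$, and the weight $2^{j(s+1)}$ is absorbed by $2^{(j-q)(s+1)}=O(1)$ for $s+1\ge 0$ and $j-q\le 4$, producing $\|u\|_{L^{2}}\|f\|_{B^{0}_{\infty,1}}\lesssim\|u\|_{L^{2}}\|f\|_{B^{s+1}_{\infty,1}}$.

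The main obstacle in (1) is the asymmetric appearance of both $\|\nabla u\|_{B^{\alpha-1}_{p,\infty}}$ \emph{and} $\|u\|_{L^{2}}$ on the right-hand side: the Besov quantity fails to control the low-frequency block $\Delta_{-1}\nabla u$ that occurs inside $S_{q'-1}\nabla u$, so the $L^{2}$ norm must be recruited through Bernstein, which is why the hypothesis $p\ge 2$ is necessary. Beyond this, extracting precisely the weight $2^{q(\alpha-1)}$ hinges on $\alpha\in(0,1)$ at both endpoints---$\alpha<1$ for summability of the paraproduct's geometric series and $\alpha>0$ for convergence of the remainder's.
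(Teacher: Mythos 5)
Your paraproduct terms in part (1) are handled essentially as in the paper, but the remainder step as you describe it does not work. If you apply the commutator estimate of Lemma \ref{lem commutator} to $[\Delta_{q},\Delta_{q'}u^{i}]\partial_{i}\widetilde{\Delta}_{q'}f$, you get $2^{-q}\norm{\nabla\Delta_{q'}u}_{L^{p}}\,2^{q'}\norm{\widetilde{\Delta}_{q'}f}_{L^{\infty}}\lesssim 2^{-q}2^{q'(2-\alpha)}\norm{\nabla u}_{B^{\alpha-1}_{p,\infty}}\norm{f}_{B^{0}_{\infty,\infty}}$, so after multiplying by $2^{q(\alpha-1)}$ the summand carries the factor $2^{(q'-q)(2-\alpha)}$, and the sum over $q'\geq q-4$ diverges; the convergent factor $2^{(q-q')\alpha}$ you quote does not come from "the same commutator bound". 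In this regime the commutator cancellation (gain $2^{-q}$) is useless because $q'\gg q$; the paper instead \emph{expands} the commutator, uses $\operatorname{div}u=0$ to write the term as $\Delta_{q}\partial_{i}(\Delta_{q'}u^{i}\widetilde{\Delta}_{q'}f)-\Delta_{q'}u^{i}\partial_{i}\Delta_{q}\widetilde{\Delta}_{q'}f$ so the derivative costs only $2^{q}$, and trades $\norm{\Delta_{q'}u}_{L^{p}}\approx 2^{-q'}\norm{\Delta_{q'}\nabla u}_{L^{p}}$ for $q'\geq 0$, which is what produces $2^{(q-q')\alpha}$. Relatedly, your diagnosis of where $\norm{u}_{L^{2}}$ and $p\geq 2$ are needed is off: the block $\Delta_{-1}\nabla u$ inside $S_{q'-1}\nabla u$ \emph{is} controlled by the nonhomogeneous norm $\norm{\nabla u}_{B^{\alpha-1}_{p,\infty}}$ (your alternative bound is harmless but unnecessary); the genuine need arises in the $q'=-1$ block of the remainder, where $\Delta_{-1}u$ cannot be converted into $\nabla u$ and is bounded by $\norm{u}_{L^{2}}$ via Bernstein, exactly as in the paper's term $\mathrm{III}^{2}_{q}$.

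Part (2) has a more serious problem: the reduction to $\norm{uf}_{B^{s+1}_{2,\infty}}\lesssim\norm{u}_{L^{2}}\norm{f}_{B^{s+1}_{\infty,1}}$ is false whenever $s+1>0$. Take $f$ a fixed bump equal to $1$ on a ball and $u=\nabla^{\perp}\bigl(2^{-N}\chi\cos(2^{N}x_{1})\bigr)$, which is divergence-free with $\norm{u}_{L^{2}}\approx 1$; then $uf$ oscillates at frequency $2^{N}$ and $\norm{uf}_{B^{s+1}_{2,\infty}}\gtrsim 2^{N(s+1)}\to\infty$ while the right-hand side stays bounded. The flaw in your argument is the $T_{f}u$ term: measuring the product in $B^{s+1}_{2,\infty}$ forces the weight $2^{j(s+1)}$ onto $\Delta_{j}u$, which $\norm{u}_{L^{2}}$ cannot absorb, and your sketch simply drops this weight. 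The correct proof (Lemma 6.10 of the cited Hmidi--Keraani--Rousset paper, which is all the present paper invokes) applies Bony's decomposition directly to $u\cdot\nabla f$, keeping the derivative on $f$: the term $T_{\nabla f}u$ is estimated using $s\leq 0$ together with the $\ell^{1}$ summation in $\norm{f}_{B^{s+1}_{\infty,1}}$, and only the remainder is rewritten as $\partial_{i}R(u^{i},f)$ (using $\operatorname{div}u=0$ and $s+1\geq 0$). So the divergence structure must be exploited termwise, not globally as $\operatorname{div}(uf)$.
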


\begin{proof}[Proof of Lemma \ref{lem commutator-EST}]
Note that point (2) is just the one in Lemma 6.10 of \cite{ref HmidiKR-BNS}, thus we only need to prove point (1). From Bony's decomposition we have
\begin{equation*}
\begin{split}
 & [\Delta_{q},u \cdot\nabla] f \\  =  & \sum_{|j-q| \leq 4} [\Delta_{q}, S_{j-1}u\cdot\nabla]\Delta_{j}f +
  \sum_{|j-q|\leq 4} [\Delta_{q}, \Delta_{j}u\cdot\nabla]S_{j-1}f + \sum_{j\geq q-4}  [\Delta_{q}\partial_{i},\Delta_{j}u^{i}]\widetilde{\Delta}_{j}f \\
  :=&  \mathrm{I}_{q}+ \mathrm{II}_{q}+ \mathrm{III}_{q}.
\end{split}
\end{equation*}
For $\mathrm{I}_{q}$, since $\Delta_{q}:=h_{q}(\cdot)\star = 2^{qn}h(2^{q}\cdot)\star$ with $h\in \mathcal{S}(\mathbb{R}^{n})$, then from \eqref{eq commutator1}
we get for every $\alpha<1$
\begin{equation*}
\begin{split}
 \norm{\mathrm{I}_{q}}_{L^{p}} & \lesssim \sum_{|j-q| \leq 4} \norm{x h_{q}}_{L^{1}} \norm{\nabla S_{j-1} u}_{L^{p}}2^{j}\norm{\Delta_{j}f}_{L^{\infty}} \\
 & \lesssim \norm{f}_{B^{0}_{\infty,\infty}} \norm{x h}_{L^{1}}  \sum_{|j-q| \leq 4} 2^{j-q} 2^{j(1-\alpha)}
 \sum_{k\leq j-2} 2^{(j-k)(\alpha-1)} 2^{k(\alpha-1)}\norm{\Delta_{k}\nabla u}_{L^{p}} \\
 & \lesssim 2^{q(1-\alpha)} \norm{\nabla u}_{B^{\alpha-1}_{p,\infty}} \norm{f}_{B^{0}_{\infty,\infty}},
\end{split}
\end{equation*}
thus
\begin{equation*}
 \sup_{q\geq -1} 2^{q(\alpha-1)}\norm{\mathrm{I}_{q}}_{L^{p}} \lesssim  \norm{\nabla u}_{B^{\alpha-1}_{p,\infty}} \norm{f}_{B^{0}_{\infty,\infty}}.
\end{equation*}
For $\mathrm{II}_{q}$, we also use \eqref{eq commutator1} to find
\begin{equation*}
\begin{split}
 \norm{\mathrm{II}_{q}}_{L^{p}} & \lesssim \sum_{|j-q| \leq 4} \norm{x h_{q}}_{L^{1}} \norm{\nabla \Delta_{j} u}_{L^{p}}\norm{S_{j-1} \nabla f}_{L^{\infty}} \\
 & \lesssim \norm{\nabla u}_{B^{\alpha-1}_{p,\infty}}  \sum_{|j-q| \leq 4} 2^{-q}  2^{j(1-\alpha)}
 \sum_{k\leq j-2}  2^{k}\norm{\Delta_{k} f}_{L^{\infty}} \\
 & \lesssim 2^{q(1-\alpha)} \norm{\nabla u}_{B^{\alpha-1}_{p,\infty}} \norm{f}_{B^{0}_{\infty,\infty}},
\end{split}
\end{equation*}
thus
\begin{equation*}
 \sup_{q\geq -1} 2^{q(\alpha-1)}\norm{\mathrm{II}_{q}}_{L^{p}} \lesssim  \norm{\nabla u}_{B^{\alpha-1}_{p,\infty}} \norm{f}_{B^{0}_{\infty,\infty}}.
\end{equation*}
For $\mathrm{III}_{q}$, we further write
\begin{equation*}
 \mathrm{III}_{q}=\sum_{j\geq q-4,j\in\mathbb{N}} [\Delta_{q}\partial_{i}, \Delta_{j} u^{i}] \widetilde{\Delta}_{j}f
 + [\Delta_{q}\partial_{i},\Delta_{-1}u^{i}]\widetilde{\Delta}_{-1}f := \mathrm{III}_{q}^{1}+\mathrm{III}_{q}^{2}.
\end{equation*}
For the first term, by direct computation we have for every $\alpha>0$
\begin{equation*}
\begin{split}
 \norm{\mathrm{III}_{q}^{1}}_{L^{p}} & \leq \sum_{j\geq q-4,j\in\mathbb{N}} \norm{\partial_{i}\Delta_{q}(\Delta_{j}u^{i}\widetilde{\Delta}_{j}f)}_{L^{p}}
 + \sum_{j\geq q-4,j\in\mathbb{N}} \norm{\Delta_{j}u^{i}\partial_{i} \Delta_{q}\widetilde{\Delta}_{j}f}_{L^{p}} \\
 & \lesssim  2^{q(1-\alpha)}\sum_{j\geq q-4,j\in\mathbb{N}} 2^{(q-j)\alpha}2^{j(\alpha-1)}\norm{\Delta_{j} \nabla  u}_{L^{p}} \norm{ \widetilde{\Delta}_{j}f}_{L^{\infty}} \\
 & \lesssim_{\alpha} 2^{q(1-\alpha)} \norm{\nabla u}_{B^{\alpha-1}_{p,\infty}} \norm{f}_{B^{0}_{\infty,\infty}},
\end{split}
\end{equation*}
thus
\begin{equation*}
 \sup_{q\geq -1} 2^{q(\alpha-1)}\norm{\mathrm{III}_{q}^{1}}_{L^{p}} \lesssim_{\alpha} \norm{\nabla u}_{B^{\alpha-1}_{p,\infty}}\norm{f}_{B^{0}_{\infty,\infty}}.
\end{equation*}
For the second term, due to $\mathrm{III}_{q}^{2}=0$ for every $q\geq 3$, we obtain for $p\geq 2$
\begin{equation*}
 \sup_{q\geq-1} 2^{q(\alpha-1)} \norm{\mathrm{III}_{q}^{2}}_{L^{p}} \lesssim \norm{ \Delta_{-1}u}_{L^{p}}\norm{\widetilde{\Delta}_{-1}f}_{L^{\infty}}
 \lesssim \norm{u}_{L^{2}}\norm{f}_{B^{0}_{\infty,\infty}}.
\end{equation*}
This concludes the proof.
\end{proof}

The following estimates on the linearized velocity equation is useful in the proof of the uniqueness part.
\begin{lemma}\label{lem-appe-LVSE}
Let $s\in]-1,1[$, $\varrho\in[1,\infty]$ and $v$ be a smooth divergence-free vector field of $\mathbb{R}^{n}$. If $u$ be a smooth solution of the linear system
\begin{equation*}
\partial_{t}u + v\cdot\nabla u+ |D|^{\alpha} u + \nabla p =f, \quad \mathrm{div} u=0.
\end{equation*}
with $\alpha\in[0,2]$ and $u|_{t=0}=u^{0}$, then for each $t\in\mathbb{R}^{+}$ we have
\begin{equation*}\label{eq-app- LVSE}
 \norm{u}_{L^{\infty}_{t}B^{s}_{2,\infty}}\leq
 C e^{C V(t) }\bigl(\norm{u^{0}}_{B^{s}_{2,\infty}}+(1+t^{1-\frac{1}{\varrho}})
 \norm{f}_{\widetilde{L}^{\varrho}_{t}B^{s+\frac{\alpha}{\varrho}-\alpha}_{2,\infty}}\bigr),
\end{equation*}
where $V(t):=\int_{0}^{t}\norm{\nabla v(\tau)}_{L^{\infty}}\textrm{d}\tau$.

\end{lemma}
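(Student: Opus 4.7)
The plan is to carry out a frequency-localized energy estimate in the spirit of the proof of Proposition \ref{propRF}, the additional feature here being the handling of the pressure and of the $B^s_{2,\infty}$ topology. First I would apply the Leray projector $\mathbb{P}$ to the equation. Since $\textrm{div}\,u=0$ implies $\mathbb{P}u=u$ and $\mathbb{P}|D|^{\alpha}u=|D|^{\alpha}u$, and since $\mathbb{P}\nabla p=0$, the system reduces to
\begin{equation*}
 \partial_{t}u + \mathbb{P}(v\cdot\nabla u) + |D|^{\alpha}u = \mathbb{P}f.
\end{equation*}
Applying $\Delta_{q}$ and restoring the transport structure produces
\begin{equation*}
 \partial_{t}\Delta_{q}u + v\cdot\nabla \Delta_{q}u + |D|^{\alpha}\Delta_{q}u = \Delta_{q}\mathbb{P}f -[\Delta_{q}\mathbb{P},v\cdot\nabla]u.
\end{equation*}

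Next I would test against $\Delta_{q}u$ in $L^{2}$. The transport contribution vanishes thanks to $\textrm{div}\,v=0$, and the generalized Bernstein inequality of Chen-Miao-Zhang (already used in Step 4 of Proposition \ref{prop apes2}) yields $\langle |D|^{\alpha}\Delta_{q}u,\Delta_{q}u\rangle \geq c\,2^{q\alpha}\|\Delta_{q}u\|_{L^{2}}^{2}$ for $q\geq 0$. After dividing by $\|\Delta_{q}u\|_{L^{2}}$ and invoking Duhamel I obtain
\begin{equation*}
 \|\Delta_{q}u(t)\|_{L^{2}} \leq e^{-ct2^{q\alpha}}\|\Delta_{q}u^{0}\|_{L^{2}}
 + \int_{0}^{t}e^{-c(t-\tau)2^{q\alpha}}\bigl(\|\Delta_{q}f(\tau)\|_{L^{2}} + \|[\Delta_{q}\mathbb{P},v\cdot\nabla]u(\tau)\|_{L^{2}}\bigr)\mathrm{d}\tau,
\end{equation*}
with the exponential replaced by $1$ when $q=-1$ (no low-frequency smoothing).

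For the forcing contribution I would apply Young's convolution inequality in time, pairing $L^{\varrho}$ with $L^{\varrho'}$; a direct computation gives $\|e^{-c\cdot 2^{q\alpha}}\|_{L^{\varrho'}[0,t]}\lesssim 2^{-q\alpha(1-1/\varrho)}$ for $q\geq 0$, which produces exactly the regularity shift $\alpha/\varrho-\alpha$ appearing in the target norm. The low-frequency block $q=-1$ is handled by the trivial Hölder bound $\int_{0}^{t}\|\Delta_{-1}f\|_{L^{2}}\mathrm{d}\tau\leq t^{1-1/\varrho}\|\Delta_{-1}f\|_{L^{\varrho}_{t}L^{2}}$, and is responsible for the factor $(1+t^{1-1/\varrho})$ in the statement. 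Multiplying by $2^{qs}$ and taking $\sup_{q\geq-1}$ gives the $\widetilde{L}^{\varrho}_{t}B^{s+\alpha/\varrho-\alpha}_{2,\infty}$ norm of $f$ with the correct weight.

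For the commutator I would invoke the classical Besov estimate (valid precisely for $|s|<1$)
\begin{equation*}
 \sup_{q\geq -1}2^{qs}\|[\Delta_{q}\mathbb{P},v\cdot\nabla]u\|_{L^{2}} \lesssim \|\nabla v\|_{L^{\infty}}\|u\|_{B^{s}_{2,\infty}},
\end{equation*}
which is standard (Chemin) up to the harmless presence of the $L^{2}$-bounded multiplier $\mathbb{P}$; plugging in and applying Gronwall's lemma to $\|u\|_{L^{\infty}_{t}B^{s}_{2,\infty}}$ generates the factor $e^{CV(t)}$. The main technical obstacle is controlling the commutator when the Leray projector is inserted inside $\Delta_{q}$: one must decompose via Bony's paraproduct and use that $\mathbb{P}$ is a matrix of zeroth-order Fourier multipliers whose kernels obey the same pointwise bounds as the identity, so the standard argument (analogous to the treatment of $\mathrm{I}$, $\mathrm{II}$, $\mathrm{III}$ in Proposition \ref{prop cmt1}) carries over. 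The restriction $s\in\,]-1,1[$ is then the natural threshold coming from this commutator estimate.
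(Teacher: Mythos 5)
Your overall scheme (frequency localization, the decay factor $e^{-ct2^{q\alpha}}$ via Duhamel, H\"older/Young in the time variable producing the shift $\frac{\alpha}{\varrho}-\alpha$ and the factor $(1+t^{1-\frac{1}{\varrho}})$ from the block $q=-1$, then a commutator estimate and Gronwall) is exactly the strategy the paper has in mind: it gives no proof of Lemma \ref{lem-appe-LVSE} and simply refers to Proposition 4.3 of \cite{ref HmidiKR-BNS}, whose proof follows this pattern. Those parts of your argument are fine.

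The genuine gap is in the commutator step. You insert the Leray projector and claim that $\Delta_{q}\mathbb{P}$ has kernels ``obeying the same pointwise bounds as the identity'', so that the standard argument gives $\sup_{q\geq-1}2^{qs}\norm{[\Delta_{q}\mathbb{P},v\cdot\nabla]u}_{L^{2}}\lesssim \norm{\nabla v}_{L^{\infty}}\norm{u}_{B^{s}_{2,\infty}}$. This is true for $q\geq 0$ (the rescaled symbol $\varphi(\xi)\big(\mathrm{Id}-\frac{\xi\otimes\xi}{|\xi|^{2}}\big)$ is smooth and compactly supported in an annulus), but it fails for the block $q=-1$: the symbol $\chi(\xi)\big(\mathrm{Id}-\frac{\xi\otimes\xi}{|\xi|^{2}}\big)$ is not smooth at $\xi=0$, its kernel decays only like $|x|^{-n}$ (compare the kernel discussion in Remark 3.2 of the paper), so $\norm{x\,h}_{L^{1}}=\infty$ and Lemma \ref{lem commutator} cannot be applied; for a slowly varying $v$ the block $[\Delta_{-1}\mathbb{P},v\cdot\nabla]u$ is really of size $\norm{v}_{L^{\infty}}$ rather than $\norm{\nabla v}_{L^{\infty}}$, so the estimate you invoke is not justified as stated (this is the same low-frequency phenomenon that forces the extra terms $\norm{u}_{L^{2}}\norm{\theta}_{L^{2}}$ in Proposition \ref{prop cmt1} and $\norm{u}_{L^{2}}$ in Lemma \ref{lem commutator-EST} of the paper). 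The cleanest repair, available precisely because you work in an $L^{2}$-based space, is to dispense with $\mathbb{P}$ altogether: apply $\Delta_{q}$ to the equation keeping the pressure, and test against $\Delta_{q}u$; since $\mathrm{div}\,\Delta_{q}u=0$ the term $\int\nabla\Delta_{q}p\cdot\Delta_{q}u\,\mathrm{d}x$ vanishes, and only the classical commutator $[\Delta_{q},v\cdot\nabla]u$ remains, for which the bound $\norm{[\Delta_{q},v\cdot\nabla]u}_{L^{2}}\lesssim 2^{-qs}\norm{\nabla v}_{L^{\infty}}\norm{u}_{B^{s}_{2,\infty}}$, uniformly in $q\geq-1$, is standard exactly in the range $s\in\,]-1,1[$. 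Alternatively you must treat the block $q=-1$ of $[\Delta_{q}\mathbb{P},v\cdot\nabla]u$ separately, accepting a lower-order term which would then have to be carried through the Gronwall argument and would alter the stated conclusion.
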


\begin{remark}\label{rem-appe-LVSE}
 The proof can be done in a similar way as obtaining Proposition 4.3 in \cite{ref HmidiKR-BNS}. We also note that if $f=f_{1}+f_{2}$, one can choose different
 $\varrho_{1}, \varrho_{2}$ to suit $f_{1}, f_{2}$ respectively.
\end{remark}


\textbf{Acknowledgments:} The authors would like to thank Prof. T.Hmidi very much for his helpful discussion and suggestions.
The authors were partly supported by the NSF of China (No.10725102).


\end{document}